\patchcmd{\ttlh@hang}{\parindent\z@}{\parindent\z@\leavevmode}{}{}
\patchcmd{\ttlh@hang}{\noindent}{}{}{}
\newcommand\eqdef{\coloneqq}
\newcommand\nbd{\nobreakdash-\hspace{0pt}}
\newcommand\idd[1]{\mathrm{id}_{#1}}
\newcommand\bigid[1]{\mathrm{Id}_{#1}}
\newcommand\invrs[1]{#1^{-1}}
\newcommand\slice[2]{{#1}/{\raisebox{-2pt}{$#2$}}}
\newcommand{\set}[1]{\{#1\}}
\newcommand\fun[1]{\mathsf{#1}}
\newcommand{\eqclass}[1]{[#1]}
\newcommand\quot[2]{{#1}/{\raisebox{-2pt}{$#2$}}}
\DeclareMathOperator{\supp}{\sf supp}
\newcommand{\D}{\fun{D}}
\newcommand{\Quot}{\fun{Qt}}
\newcommand{\quottf}{\kappa}
\newcommand{\Q}{\fun{Q}}
\newcommand{\tD}{\tilde{\D}} %
\newcommand{\dirac}[1]{\delta_{#1}}
\newcommand{\idmat}[1]{{\sf id}_{#1}}
\newcommand{\tp}[1]{{#1}^{\fun{T}}}
\newcommand{\kro}{\otimes}
\newcommand{\copyy}[1]{\Delta_{#1}}
\newcommand{\disc}[1]{E_{#1}}
\newcommand{\dom}[1]{\overline{#1}}
\newcommand{\cat}[1]{\mathbf{#1}}
\newcommand{\Set}{\cat{Set}}
\newcommand{\PSet}{\cat{pSet}}
\newcommand{\Cat}{\cat{Cat}}
\newcommand{\SubMat}{\mathbf{subMat}}
\newcommand{\PSubMat}{\mathbf{psubMat}}
\newcommand\restr[2]{{#1}{\raisebox{0pt}{$|_{#2}$}}}
\newcommand\incl{\hookrightarrow}
\DeclareMathOperator*{\colim}{colim}
\newcommand{\pt}{\mathbf{1}}
\newcommand\cls[1]{\mathbb{#1}}
\newcommand{\C}{\cls{C}}
\newcommand{\X}{\cls{X}}
\newcommand{\Y}{\cls{Y}}
\newcommand{\K}{\mathcal{K}}
\newcommand{\T}{\fun{T}}
\newcommand{\F}{\fun{F}}
\newcommand{\G}{\fun{G}}
\newcommand\celto{\Rightarrow}
\newcommand{\comp}[1]{*_{#1}}
\newcommand{\MonCatlax}{\cat{SymMonCat}}
\newcommand{\Mnd}{\mathrm{Mnd}}
\DeclareMathOperator{\Kl}{\mathrm{Kl}}
\newcommand{\kl}{{\scalebox{0.7}{\( \Kl \)}}}
\newcommand{\Phy}{\cat{Phy}}
\newcommand{\Comp}{\cat{Comp}}
\newcommand{\tf}{\rightsquigarrow}
\newcommand{\bbR}{\mathbb{R}}
\renewcommand{\H}{\fun{H}}
\newcommand{\Hphy}{\H_{\mathrm{phy}}}
\newcommand{\Hcomp}{\H_{\mathrm{comp}}}
\newcommand{\Hnc}{\H_{\mathrm{nc}}}
\newcommand{\scl}{.8}
\definecolor{darkMagenta}{HTML}{DE3163}
\newcommand{\tc}[1]{\textcolor{darkMagenta}{#1}}
\newtheoremstyle{ittheorem}
  {\topsep}   
  {\topsep}   
  {\itshape}  
  {0pt}       
  {\bfseries} 
  { --- }         
  {5pt plus 1pt minus 1pt} 
  {}          
\newtheoremstyle{itdfn}
  {\topsep}
  {\topsep}
  { }
  {0pt}
  {\bfseries}
  {}
  {5pt plus 1pt minus 1pt}
  {Definition \thmnumber{#2}{\thmnote{\normalfont\ \ %
{\sffamily(#3)}.}}}
\newtheoremstyle{itrmk}
  {0.5\topsep}
  {0.5\topsep}
  {\normalfont}
  {0pt}
  {\sffamily \itshape}
  { --- }
  {5pt plus 1pt minus 1pt}
  {}
\theoremstyle{ittheorem}
\newtheorem{thm}{Theorem}[section]
\newtheorem*{thm*}{Theorem}
\newtheorem{prop}[thm]{Proposition}
\newtheorem*{prop*}{Proposition}
\newtheorem{cor}[thm]{Corollary}
\newtheorem{lem}[thm]{Lemma}
\newtheorem*{cor*}{Corollary}
\theoremstyle{itdfn}
\newtheorem{dfn}[thm]{}
\theoremstyle{itrmk}
\newtheorem{rmk}[thm]{Remark}
\newtheorem{comm}[thm]{Comment}
\setlist{leftmargin=20pt,itemsep=0pt,parsep=0pt,topsep=1ex}
\tikzstyle{strings}=[baseline={([yshift=-.5ex]current bounding box.center)}]
\tikzset{every picture/.append style={scale=1}, transform shape, strings}
\tikzset{%
symbol/.style={%
draw=none,
every to/.append style={%
edge node={node [sloped, allow upside down, auto=false]{$#1$}}}
}
}
\tikzset{simple/.style={}}
\tikzset{nothing/.style={outer sep=-3.4pt}}
\tikzset{map/.style={draw,fill=white, rectangle}}
\tikzstyle{filled}=[-, fill=black]
\tikzset{dot/.style={thick, fill=black, circle, scale=1, inner sep = .05cm}}
\tikzset{circ/.style={
shape=circle, inner sep=2pt, thick, draw}}
\tikzset{ox/.style={draw, scale=0.9,minimum height=.1cm,circle,append after command={
[shorten >=\pgflinewidth, shorten <=\pgflinewidth,]
(\tikzlastnode.north west) edge (\tikzlastnode.south east)
(\tikzlastnode.north east) edge (\tikzlastnode.south west) } } }
\tikzstyle{none}=[inner sep=-1pt]
\tikzstyle{circle}=[shape=circle,draw]
\tikzstyle{onehalfcircle}=[shape=circle, scale=1.5, draw]
\tikzstyle{twocircle}=[shape=circle, scale=2, draw]
\tikzstyle{black}=[shape=circle, fill=black, draw]
\tikzset{wires/.style={}}
\newcommand\runtitle{compositional account of generalized reversible computing}
\newcommand\runauthor{chanavat and srinivasan}
\title{A Compositional Account of \\ Generalized Reversible Computing}
\author{Cl\'emence Chanavat and Priyaa Varshinee Srinivasan}
\institution{Tallinn University of Technology, Estonia}
\begin{document}

\maketitle
\begin{center}
	\begin{minipage}[t]{.95\textwidth}
		\small\textsc{Abstract.} We develop a compositional framework for generalized reversible computing using copy-discard categories and resource theories. 
		We introduce partitioned matrices between partitioned sets as subdistribution matrices which preserve the equivalence relation of its domain.
		We model computational and physical transformations as subdistribution matrices over the category of sets and partitioned matrices on partitioned sets, respectively. 
		We show that the interactions between the physical, and computational transformations are governed by an aggregation functor whose functoriality and monoidality we deduce from general principles of the formal theory of monads.
		We study the associated copy-discard structures, in particular, general conditions for determinism and partial invertibility.
		We then define several notions of entropies that we use to state and prove the fundamental theorem of generalized reversible computing. 
	\end{minipage}
	
	\vspace{5pt}

	\begin{minipage}[t]{0.95\textwidth}
		\setcounter{tocdepth}{2}
		\tableofcontents
	\end{minipage}
\end{center}

\makeaftertitle

\section{Introduction}

Landauer's principle \cite{landauer1961principle} bridges information theory and thermodynamics by providing a theoretical lower limit for energy dissipation during a computation.
It states:
{\em ``any logically irreversible manipulation of information, such as the erasure of a bit or the merging of two computation paths, must be accompanied by a corresponding increase in entropy (heat dissipation) in the environment.''}

Current computers dissipate almost all of their consumed electrical energy as heat 
leading to dramatic effects at the scale of supercomputers and data centers. For example, 
around 40\% of total energy consumption of a data center is related to its cooling 
systems \cite{winick2025reducing}. Reversible computing founded upon Landauer's principle 
aims to minimize the energy cost of computation by making every computational step 
reversible. In spite of the debates regarding the connection between logical and physical 
reversibility \cite{bennett2003notes}, Landauer's principle and reversible computing 
remain an active area of research and development for their potential energy advantage.

In a recent article, Frank observed the gap between the engineering principles and the theory of reversible computing \cite{Fra18}. Reversible computing as described by the current theories is overly restrictive excluding operations utilized by engineers. 
In order to address this disconnect, Frank proposed a theory of \emph{generalized reversible computing} (GRC), which is concerned with identifying the minimal requirements at the logical level for computational operations to be reversible at the physical level.

A computational system \cite{Fra18} is described using a partitioned set, or equivalently, a set equipped with an equivalence relation. 
The individual elements of the set represent the \emph{physical states} of the system and all the computationally indistinguishable states are considered as a single partition. 
Thus, each partition with one or more physical states is considered a \emph{computational state}. 
The {\it context of a computational system} is described by means of a probability distribution over the set of physical states, which in turn induces a probability distribution over the set of computational states. 

A computational process transforms a computational system from one state to another probabilistically. At the physical level, a computational process is non-entropy ejecting, if it does not increase the {\em non-computational entropy} \cite[Definition 7]{Fra18}, a term which represents uncertainty purely at the physical level. Then, the {\em fundamental theorem of generalized computing} \cite[Theorem 6]{Fra18} formally states the equivalence between non-entropy-ejecting processes and a weakened notion of logical reversibility called \emph{conditional reversibility}, a reversibility which is defined over a subset of computational states rather than on the entire state space. 
This formal equivalence has been set up using set-theoretic foundations. 

A fundamental aspect which is not evident in the set-theoretical framework of GRC is when two conditionally reversible processes can be composed to obtain a  new conditionally reversible process. 
Compositionality is a crucial feature of computation, as circuits are constructed by composing gates both sequentially and in parallel. 
Hence, the main goal of the article is to use category theory to formulate a compositional model of generalized reversible computing described in \cite{Fra18} and to present the formal equivalence using this framework. 
As such, this article lies at the crossroads of physics, computer science, and category theory.

Let us first describe in more detail the categorical instruments that will support the compositional model.

\subsection*{Categorical wiring behind partitioned matrices}

We rely first on classical tools of categorical probability theory and most notably \emph{copy-discard categories} \cite{CoG99, ChJ19} as our basis to develop the compositional framework of generalized reversible computing. 
Copy-discard categories are based on monoidal categories, also known as process theories, which already provide a unified framework for modeling a wide range of systems—automata, cryptographic, concurrent, quantum, and probabilistic, to name a few.

A copy-discard category is a monoidal category \( (\cls{C}, \otimes, I) \) with extra structural morphisms allowing to produce copies of an object \( \Delta \colon X \to X \otimes X \), and to delete (or discard) an object \( \varepsilon \colon X \to I \).
Those morphisms are required to satisfy a certain number of axioms: for instance, copying an object followed by deleting one of the copies must amount to doing nothing.   
Already at this level of generality, there are natural definitions for when a morphism is, for instance, \emph{deterministic}, \emph{total}, or a \emph{partial isomorphism}.

The prototypical example of a copy-discard category is \( \Set \) with its monoidal structure given by the cartesian product.
The copy function \( \Delta \colon X \to X \times X \) sends \( x \in X \) to the pair \( (x, x) \), and the discard function \( \epsilon \colon X \to \pt \) is the unique function to the terminal set. 
In the category \( \Set \), every function is deterministic and total.
Partial isomorphisms are precisely isomorphisms.
More generally, a copy-discard category in which every morphism is deterministic is a cartesian restriction category \cite{CoS02b,Nester2024} which is used as a categorical axiomatization of deterministic partial processes. 

A more probabilistic example arises when one considers the category in which objects are again sets, but this time, a morphism from a set \( X \) to a set \( Y \) is a \emph{subdistribution matrix} of shape \( (X, Y) \), that is, a matrix \( M \) 
indexed by \( X \times Y \) and whose \( x \)\nbd th row is a (finitely supported) subdistribution on \( Y \).
Such matrices can be composed using matrix multiplication.
This forms the category \( \SubMat \) of subdistribution matrices\footnote{Also called substochastic matrices}, which is also a copy-discard category, see Subsection \ref{subsec: subdistribution matrices} for more detail. 

There is a canonical functor from the category of sets to the category of subdistribution matrices, sending a set to itself, and a function \( f \colon X \to Y \) to the matrix of shape \( (X, Y) \) whose entry at \( (x, y) \) is \( 1 \) if \( y = f(x) \) and \( 0 \) otherwise.
It turns out that any deterministic morphisms of the category \( \SubMat \) is of this form (see Lemma \ref{lem:deterministic_iff_quasidirac}).
This example is the canonical one of the more general trend that uses Kleisli categories of certain monads on a category \( \C \) to add ``non-deterministic'' morphisms to \( \C \) \cite{fritz2020markov,DiM23}.

Instead of using sets, in this article we are concerned with modelling computational systems using \emph{partitioned sets}. 
We think of a partitioned set \( (X, \sim) \) as some large physical system, and each equivalence class \( \eqclass{x} \) of an element \( x \in X \) as a computational unit of this physical system. As such, the category \( \PSet \) of partitioned sets and functions that respect the equivalence relations plays a central role in this work.
Since the category \( \PSet \) admits products, it is a copy-discard category, where, again, all functions are deterministic and total --- but we we are interested in modelling non-deterministic physical transformations.
Thus, the real main character of the article is \( \PSubMat \), the category of {\em partitioned matrices} on partitioned sets. 

To define partitioned matrices, we note that given a partitioned set $(X,\sim)$, we get a canonical equivalence relation on $\D X$ where $\D X$ is the set of probability distributions over $X$. 
Given a partitioned set $(X,\sim)$, we say two subdistributions $p$ and $q$ on $X$ are equivalent if the total probability assigned to each partition is the same under $p$ and $q$. 
Then, a subdistribution matrix \( M \) on the sets \( (X, \sim) \) and \( (Y, \sim) \) is {\it partitioned} if whenever \( p \) and \( q \) are equivalent subdistribution on \( (X, \sim) \), $pM$ and $qM$ are equivalent subdistributions on \( (Y, \sim) \).
Thus, a partitioned matrix, representing a physical transformation, will send computationally equivalent states in $\quot X \sim$ to computationally equivalent states in $\quot Y \sim$. 
Thus, for a transformation of shape \( (X, Y) \) to be partitioned, it needs to respect both the physical system, and the underlying computational system.

Now there is a quotient functor from \( \PSet \) to \( \Set \) which maps a partitioned set \( (X, \sim) \) its quotient \( \quot{X}{\sim} \).
The condition for a matrix to be partitioned turns out to be precisely what is needed to extend the quotient functor to a functor \( \Q \colon \PSubMat \to \SubMat \), by summing the entries of a partitioned matrix of shape \( (X, Y) \) along each partition of \( X \).
We call \( \Q \) the \emph{aggregation functor}.
The main compositional result of the article is as follows (Corollary \ref{cor:aggregation_functor}).
\begin{cor*}
    The construction sending a partitioned matrix \( M \) to its aggregated matrix \( \Q M \) defines a strong monoidal functor
    \begin{equation*}
        \Q \colon (\PSubMat, \kro, \pt) \to (\SubMat, \kro, \pt).
    \end{equation*}
\end{cor*}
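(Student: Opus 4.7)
The plan is to exploit the general principles of the formal theory of monads, as signalled by the abstract. The key idea is that both $\SubMat$ and $\PSubMat$ arise as Kleisli categories of commutative (hence symmetric monoidal) subdistribution monads, and that the aggregation functor is the Kleisli lift of a monoidal monad morphism covering the quotient functor $\quottf \colon \PSet \to \Set$.

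First I would recall that $\SubMat \cong \Kl(\D)$, where $\D$ is the finitely supported subdistribution monad on $\Set$, and, in parallel, identify $\PSubMat \cong \Kl(\tD)$ for a monad $\tD$ on $\PSet$ whose underlying functor sends $(X, \sim)$ to $\D X$ equipped with the equivalence ``same aggregate mass on each $\sim$\nbd class''. The defining condition of a partitioned matrix is precisely the condition that a function $(X,\sim) \to \tD(Y,\sim)$ in $\PSet$ corresponds to a partitioned matrix of shape $(X,Y)$, so this identification is tautological once one checks that $\tD$ is a commutative monad on $\PSet$. The unit and multiplication of $\D$ lift along the forgetful functor $\PSet \to \Set$: the Dirac $\dirac{x}$ only depends on the $\sim$\nbd class of $x$, and the multiplication (averaging a distribution of distributions) is compatible with class-wise aggregation. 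Commutativity of $\tD$ follows from commutativity of $\D$ together with the fact that classes in a product partition are products of classes, hence the tensorial strength descends to $\PSet$.

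Next I would define a monoidal monad morphism $\alpha \colon \tD \celto \D \circ \quottf$ by the rule $\alpha_{(X,\sim)}(p)(\eqclass{x}) \eqdef \sum_{x' \sim x} p(x')$. The three required compatibilities are: with units (a Dirac aggregates to a Dirac on classes), with multiplication (a Fubini-type reshuffling of finite sums), and with the tensorial strength (product classes are products of classes). Each is a direct computation involving only finite sums. By the formal theory of monads, a symmetric monoidal monad morphism induces a strong symmetric monoidal functor between the associated Kleisli categories, and direct inspection shows that the induced functor is precisely the assignment $M \mapsto \Q M$, yielding the conclusion.

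The main obstacle sits in the very first step, namely verifying carefully that $\tD$ is a well-defined commutative monad on $\PSet$: one must confirm that the monad structure maps and the strength of $\D$ all descend cleanly to the ``same aggregate mass'' equivalence relation on $\D X$, rather than merely to pointwise equality of subdistributions. Once this setup is in place, both functoriality and strong monoidality of $\Q$ are formal consequences of the Kleisli construction, with no entry-by-entry bookkeeping of partitioned matrices required.
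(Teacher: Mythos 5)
Your proposal follows essentially the same route as the paper: identify \( \SubMat \) and \( \PSubMat \) as the Kleisli categories of the monoidal (commutative) subdistribution monads on \( \Set \) and \( \PSet \), exhibit aggregation as a colax morphism of monoidal monads over the quotient functor, and invoke the formal theory of monads to obtain the strong monoidal functor between Kleisli categories. The only quibble is a typing slip: the structure 2-cell should have components \( \quot{(\D X)}{\sim} \to \D(\quot{X}{\sim}) \), i.e.\ live over the quotient functor (which the paper denotes \( \Quot \), reserving \( \quottf \) for the 2-cell itself), though your formula visibly descends to the quotient, so the argument goes through as in the paper.
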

As stated earlier, the category \( \SubMat \) is in fact the Kleisli category of the subdistribution monad on \( \Set \).
Similarly, we prove (Proposition \ref{prop:Kleisli_is_matrix}) that the category \( \PSubMat \) is the Kleisli category of the subdistribution monad, this time on \( \PSet \).
Thus, our Corollary states two things: 
\begin{enumerate}
	\item the Kleisli categories of the subdistributions monads, on sets and partitioned sets, are monoidal, and
	\item sending a partitioned matrix to its aggregated matrix is a functorial operation that respects the monoidal structure.
\end{enumerate}
The above two facts are obtained at once by a direct application of the formal theory of monads \cite{street1972formal}.
A more detailed overview is given in Section \ref{sec: mathematical_backend}.

\subsection*{Compositional theory of generalized reversible computing (GRC)}

In Sections \ref{sec: compositional_framework} and \ref{sec: mathematical_backend}, we construct the three main mathematical interfaces required for our purposes, namely: 

\begin{enumerate}[1.]
\item the category of partitioned sets and partitioned matrices called $\PSubMat$ representing computation at the physical level (Definition \ref{Defn:partitioned-matrix}),  
\item the category of sets and subdistribution matrices called $\SubMat$ representing computation at the logical level (Definition \ref{dfn:subdistribution_matrices}), and 
\item the monoidal functor $\Q \colon (\PSubMat, \kro, \pt) \to (\SubMat, \kro, \pt)$ relating physical and logical levels of computation, (Definition \ref{dfn:aggregation}),
\end{enumerate}
along with the classification of deterministic, total, quasi-total and partial isomorphisms in \( \PSubMat \) and \( \SubMat \).

Section \ref{sec:fundamental-theorem} is dedicated to setting up the equivalence between physically and logically reversible operations, compositionally, using the above-listed interfaces.

In particular, we construct categories of physical and computational contexts called \( \Phy \) and \( \Comp \)  as subcategories of the coslice categories \( \slice{\pt}{\PSubMat} \) and \( \slice{\pt}{\SubMat} \) respectively.  
The objects of the coslice categories are \emph{contexts}, that is, probability distributions on (partitioned) sets, and morphisms are (partitioned) subdistribution matrices. 
We define {\bf Phy} to be subcategory of \( \slice{\pt}{\PSubMat} \) containing only those partitioned matrices which preserve the entropy of its domain distribution -- we call such partitioned matrices as {\bf closed physical transformations} \cite[Theorem 1]{Fra18}. 

The aggregation functor restricts to a functor $\Q \colon \Phy \to \Comp$ by mapping a physical context to its corresponding computational context, and a physical transformation to its corresponding computational transformation. 
This functor is essentially surjective, ensuring that every computational context has a corresponding physical context. 
It is also full (for locally finite partitioned sets), so that every computational transformation can be modelled by a physical transformation.

Following \cite{Fra18}, we capture physical reversibility using non-entropy ejecting processes. 
For this, one requires notions of physical, computational, and non-computational entropies. 
Given a physical context, its physical entropy is simply the entropy of its underlying distribution. 
Its computational entropy is the entropy of the aggregated distribution under $\Q$. 
The difference between these two quantities is the \emph{non-computational entropy}, which could also been defined as the physical entropy conditioned on the computational information \cite[Theorem 2]{Fra18}. 
A physical transformation is said to be non-entropy ejecting when it does not increase non-computational entropy, in other words, not lose computational entropy, see the Subsection \ref{Subsec:physical-reversibility} for details.

On the computational side, we use the notion of \emph{conditional reversibility}, \cite[Definition 16]{Fra18}, which we can frame very simply using the copy-discard structure: a deterministic computational transformation is conditionally reversible if it admits a partial inverse restricted to those states with a non-zero probability, see Subsection \ref{Subsec:computational-rev} for formal definition and compositional properties.

Finally, Proposition \ref{prop:nee_iff_condrev} states the equivalence of physical and computational (logical) reversibility -- a physical transformation whose corresponding computational transformation is deterministic is non-entropy ejecting if and if only if the computational transformation conditionally reversible.

\subsection*{Resource theories and functors}

The final piece of our compositional framework is the resource theoretical description of the equivalence stated in Proposition \ref{prop:nee_iff_condrev}.

Resource theories \cite{Quantum-resource-1,Quantum-resource-2,Gour-review} are used in physics to model systems in which certain operations are considered to be {\it free of cost} among the set of all operations. 
For example, placing a glass of chilled water at room temperature warms up the water to the ambient temperature at no extra cost. 
In this context, operations that bring the water temperature into equilibrium with the ambient temperature are considered free.  
In order to produce a {\it resourceful state}  --- for example, a glass of chilled water ---  one requires non-free transformations, such as a fridge, which consumes electricity. 
Resource theories have been successfully deployed to study, among other examples, thermodynamical \cite{delRio,Lostaglio-thermo}, and quantum systems \cite{Review-entanglement,Dynamical-entanglement,Review-coherence}. 

A resource theory is formally described as a category with a chosen subcategory of {\it free transformations}, see \cite[Definition 3.3]{coecke2016resource} and \cite[Definition 3.1]{Resource-Monotone}. 
In the context of the article, we consider two resource theories for a computational system:
\begin{enumerate}
    \item the physical level \( \Phy \), whose free transformations \( \Phy_{\sf f} \) are the non-entropy-ejecting transformation whose underlying computations (under the image of $\Q$) are deterministic (Definition \ref{dfn:resource-theory-phy});
    \item the computational level \( \Comp \), whose free transformations \( \Comp_{\sf f} \) are the conditionally reversible computations (Definition \ref{dfn:resource-theory-comp}).
\end{enumerate} 
A \emph{transformation of resource theories} is a functor which preserves the free transformations \cite[Definition 3.7]{Resource-Monotone}.
If a transformation maps only free transformations to free transformations, then we call it \emph{resource-reflecting}.

Then, the fundamental theorem of GRC can be described as a resource-reflecting transformations between the resource theories of physical and computational transformations as in Theorem \ref{thm: fundamental_theorem}:
\begin{thm*}
    The aggregation functor 
    \begin{equation*}
        \Q \colon (\Phy,\Phy_{\sf f}) \to (\Comp, \Comp_{\sf f})	
    \end{equation*}
    is a resource reflecting transformation of resource theories. 
\end{thm*}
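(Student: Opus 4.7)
The plan is to unpack the theorem into its two constitutive requirements and verify each using the key equivalence already established in Proposition \ref{prop:nee_iff_condrev}. Since the aggregation functor has previously been shown to restrict to $\Q \colon \Phy \to \Comp$, functoriality is given for free, and what remains is the behaviour of $\Q$ on the distinguished classes of free morphisms. By definition, a morphism $M$ of $\Phy$ lies in $\Phy_{\sf f}$ iff it is non-entropy-ejecting and its aggregation $\Q M$ is deterministic; a morphism of $\Comp$ lies in $\Comp_{\sf f}$ iff it is conditionally reversible, which, as recalled in Subsection \ref{Subsec:computational-rev}, already bakes in determinism.

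For the preservation half (transformation of resource theories), I would take $M \in \Phy_{\sf f}$. Then $M$ is non-entropy-ejecting and $\Q M$ is deterministic, so the ``only if'' direction of Proposition \ref{prop:nee_iff_condrev} delivers that $\Q M$ is conditionally reversible; hence $\Q M \in \Comp_{\sf f}$. For the reflection half, I would assume $\Q M \in \Comp_{\sf f}$, that is, $\Q M$ is conditionally reversible. By definition this already forces $\Q M$ to be deterministic, and with that hypothesis in place the ``if'' direction of Proposition \ref{prop:nee_iff_condrev} yields that $M$ is non-entropy-ejecting. Combining, $M \in \Phy_{\sf f}$.

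The main conceptual work has already been done in Proposition \ref{prop:nee_iff_condrev}; the present theorem is essentially its packaging into the language of resource theories. The only subtlety to keep in mind is an asymmetry between the two sides: the determinism condition is exhibited as a separate clause in the definition of $\Phy_{\sf f}$ but is absorbed into the notion of conditional reversibility on the $\Comp_{\sf f}$ side, and one must explicitly extract it from the hypothesis ``$\Q M$ is conditionally reversible'' in order to apply the proposition in the reflection step. Beyond this small bookkeeping matter, I do not expect any substantial obstacle.
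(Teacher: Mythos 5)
Your proof is correct and is exactly the argument the paper intends: its own proof is the single line ``Immediate by Proposition \ref{prop:nee_iff_condrev}'', and your unpacking of the two directions (preservation from the forward implication, reflection from the converse, noting that conditional reversibility already entails determinism of $\Q M$) is the right way to fill in that ``immediate''.
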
 

\subsection*{Related work} 

The notion of partial reversiblility have been already studied in category theory with known applications to reversible computing. Cockett and Lack introduced restriction categories \cite{CoS02} as categorical axiomatization of partial processes --- processes which are defined on a subset of their domain. In his PhD thesis, Brett Giles  explores a flavor of restriction categories, called the (discrete) inverse categories, as a compositional setting for studying reversible computing and its relationship with quantum computing \cite{BrettThesis}. Multiple models of reversible programming languages based on inverse categories have been studied in \cite{ambramsky2005structural, kaarsgaard2017inverse, heunen2018reversible}, and based on groupoids in \cite{CaA16, CJS22, CKS22, carette2024compositional}. With applications to optimization problems, inverse categories have been used to model classes of reversible circuits, see \cite{CCS18, CoC19}.

While most of the existing categorical literature focuses on semantics of reversible computing, our work motivated by \cite{Fra18} aims to formally relate Landauer's principle about thermodynamically reversible processes to logically reversible computations --- this relationship still being a topic of debate \cite{bennett2003notes,norton2011waiting}.  

\subsection*{How to read this paper?}

We have attempted to make this article accessible to a wider scientific audience by presenting the mathematical interface separate from the categorical results.
The interface of compositional generalized reversible computing is presented in Section \ref{sec: compositional_framework} and used in Section \ref{sec:fundamental-theorem}.  
Section \ref{sec: mathematical_backend} is dedicated to more abstract results supporting the interface, that the reader less familiar with the language of category theory may safely skip.  
The article starts by Section \ref{sec:preliminaries}, which reviews the basics of copy-discard categories and resource theories: none of the result presented in that Section are new, but we decided, for the sake of completeness, to include string diagrammatic proofs of certain useful results that will be used in the sequel.

\subsection*{Convention on the order of composition}

Throughout the article, we always write composition in the diagrammatic order, thus the composite of a morphism \( f \colon X \to Y \) and \( g \colon Y \to Z \) will be written \( fg \).
When working in a 2-category, we use \( - \comp{0} - \) for the horizontal composition and \( - \comp{1} - \) for the vertical composition; again, both are written diagrammatically.
Last, string diagrams are read left to right and top to bottom.

\subsection*{Acknowledgement}
The authors thank Nathanael Arkor, Robin Cockett, Elena Di Lavore, Fosco Loregian, Callum Reader, Mario Rom\'an, Pawe\l{} Soboci\'nski for their insights and feedback. 
The string diagrams in this paper have been drawn using TikZit \cite{tikzit} developed by Alex Kissinger. 
We thank Robin Kaarsgaard and Martti Karvonen for their recommendations of related works. 
This work was co-funded by the European Union and Estonian Research Council through the Mobilitas 3.0 (MOB3JD1227).

\section{Copy discard categories and resource theories}
\label{sec:preliminaries}

\subsection*{Copy discard categories}

\begin{dfn} [\tc{Cocommutative comonoid object}] \label{dfn:commcomonoid_object}
	Let \( (\C, \otimes, I, \sigma) \) be a symmetric monoidal category with braiding \( \sigma \).
    An object \( X \) of \( \C \) is a {\bf cocommutative comonoid} if there exist morphisms $\Delta \colon X \to X \otimes X$, called the {\bf copy morphism}, and $\varepsilon \colon X \to I$, called the {\bf discard morphism}, drawn respectively as
    \[ \begin{tikzpicture}[scale=\scl]
			\begin{pgfonlayer}{nodelayer}
				\node [style=circ] (0) at (-3, 2) {};
				\node [style=none] (1) at (-4, 2) {};
				\node [style=none] (2) at (-2, 2.5) {};
				\node [style=none] (3) at (-2, 1.5) {};
				\node [style=none] (4) at (-3.5, 2.25) {$X$};
				\node [style=none] (5) at (-2.25, 2.75) {$X$};
				\node [style=none] (6) at (-2.25, 1.75) {$X$};
			\end{pgfonlayer}
			\begin{pgfonlayer}{edgelayer}
				\draw [thick] (1.center) to (0);
				\draw [in=180, out=60, looseness=1.25, thick] (0) to (2.center);
				\draw [in=-180, out=-60, looseness=1.25, thick] (0) to (3.center);
			\end{pgfonlayer}
		\end{tikzpicture} \quad \text{ and } \quad 
		\begin{tikzpicture}[scale=\scl]
			\begin{pgfonlayer}{nodelayer}
				\node [style=circ] (0) at (0, 2.5) {};
				\node [style=none] (1) at (-1.75, 2.5) {};
				\node [style=none] (2) at (-1.25, 2.75) {$X$};
			\end{pgfonlayer}
			\begin{pgfonlayer}{edgelayer}
				\draw [thick] (1.center) to (0);
			\end{pgfonlayer}
		\end{tikzpicture}
		\]
		and satisfying
		\begin{itemize}
			\item the counit axiom: $\Delta (X \otimes \varepsilon) = \Delta (\varepsilon \otimes X) = \idd{X}$,
			\item the coassociativity axiom: $\Delta(X \otimes \Delta) = \Delta(\Delta \otimes X) $, and
			\item the cocommutativity axiom: $\Delta = \Delta \sigma_{X, X}$.
		\end{itemize}
		In string diagrams, we have respectively:
		\begin{align*}
		{\bf {[counit]}}& \quad\quad
		\begin{tikzpicture}[scale=\scl]
			\begin{pgfonlayer}{nodelayer}
				\node [style=circ] (2) at (-3, 1) {};
				\node [style=none] (3) at (-4, 1) {};
				\node [style=none] (4) at (-2, 1.5) {};
				\node [style=circ] (5) at (-2, 0.5) {};
				\node [style=none] (10) at (1.5, 1) {};
				\node [style=none] (11) at (3.25, 1) {};
				\node [style=none] (12) at (1, 1) {$=$};
				\node [style=none] (13) at (-1.5, 1) {$=$};
				\node [style=circ] (14) at (-0.25, 1) {};
				\node [style=none] (15) at (-1.25, 1) {};
				\node [style=none] (16) at (0.75, 0.5) {};
				\node [style=circ] (17) at (0.75, 1.5) {};
			\end{pgfonlayer}
			\begin{pgfonlayer}{edgelayer}
				\draw [thick](3.center) to (2);
				\draw [thick, in=180, out=60, looseness=1.25] (2) to (4.center);
				\draw [thick, in=-180, out=-60, looseness=1.25] (2) to (5);
				\draw [thick](10.center) to (11.center);
				\draw [thick](15.center) to (14);
				\draw [in=-180, out=-60, looseness=1.25, thick] (14) to (16.center);
				\draw [in=180, out=60, looseness=1.25, thick] (14) to (17);
			\end{pgfonlayer}
		\end{tikzpicture} \\
		{\bf {[coassoc]}}& \quad\quad
		\begin{tikzpicture}[scale=\scl]
			\begin{pgfonlayer}{nodelayer}
				\node [style=circ] (2) at (-3, 1) {};
				\node [style=none] (3) at (-4, 1) {};
				\node [style=none] (4) at (-2, 1.5) {};
				\node [style=circ] (5) at (-2, 0.5) {};
				\node [style=none] (13) at (-0.25, 1) {$=$};
				\node [style=none] (18) at (-1, 1) {};
				\node [style=none] (19) at (-1, 0) {};
				\node [style=none] (20) at (-1, 1.5) {};
				\node [style=circ] (21) at (1.25, 0.5) {};
				\node [style=none] (22) at (0.25, 0.5) {};
				\node [style=none] (23) at (2.25, 0) {};
				\node [style=circ] (24) at (2.25, 1) {};
				\node [style=none] (25) at (3.25, 0.5) {};
				\node [style=none] (26) at (3.25, 1.5) {};
				\node [style=none] (27) at (3.25, 0) {};
			\end{pgfonlayer}
			\begin{pgfonlayer}{edgelayer}
				\draw [thick] (3.center) to (2);
				\draw [thick, in=180, out=60, looseness=1.25] (2) to (4.center);
				\draw [in=-180, out=-60, looseness=1.25, thick] (2) to (5);
				\draw [thick] (4.center) to (20.center);
				\draw [in=-180, out=60, looseness=1.25, thick] (5) to (18.center);
				\draw [in=-180, out=-60, looseness=1.25, thick] (5) to (19.center);
				\draw [thick] (22.center) to (21);
				\draw [in=-180, out=-60, looseness=1.25, thick] (21) to (23.center);
				\draw [in=180, out=60, looseness=1.25, thick] (21) to (24);
				\draw [thick] (23.center) to (27.center);
				\draw [in=180, out=-60, looseness=1.25, thick] (24) to (25.center);
				\draw [in=180, out=60, looseness=1.25, thick] (24) to (26.center);
			\end{pgfonlayer}
		\end{tikzpicture}\\
		{\bf {[cocomm]}}& \quad\quad
		\begin{tikzpicture}[scale=\scl]
			\begin{pgfonlayer}{nodelayer}
				\node [style=circ] (2) at (-3, 1) {};
				\node [style=none] (3) at (-4, 1) {};
				\node [style=none] (4) at (-2, 1.5) {};
				\node [style=none] (5) at (-2, 0.5) {};
				\node [style=circ] (6) at (0, 1) {};
				\node [style=none] (7) at (-1, 1) {};
				\node [style=none] (8) at (0.5, 1.5) {};
				\node [style=none] (9) at (0.5, 0.5) {};
				\node [style=none] (10) at (1.5, 0.5) {};
				\node [style=none] (11) at (1.5, 1.5) {};
				\node [style=none] (12) at (-1.5, 1) {$=$};
			\end{pgfonlayer}
			\begin{pgfonlayer}{edgelayer}
				\draw [thick] (3.center) to (2);
				\draw [thick, in=180, out=60, looseness=1.25] (2) to (4.center);
				\draw [thick, in=-180, out=-60, looseness=1.25] (2) to (5.center);
				\draw [thick] (7.center) to (6);
				\draw [thick, in=180, out=60, looseness=1.25] (6) to (8.center);
				\draw [thick, in=-180, out=-60, looseness=1.25] (6) to (9.center);
				\draw [thick, in=-180, out=0] (8.center) to (10.center);
				\draw [thick, in=180, out=0, looseness=0.75] (9.center) to (11.center);
			\end{pgfonlayer}
		\end{tikzpicture}
		\end{align*}
\end{dfn}

\begin{dfn} [\tc{Copy-discard category}] 
    A {\bf copy-discard category} is a symmetric monoidal category \( (\C, \otimes, I, \sigma) \) such that every object \( X \in \C \) is equipped with a commutative comonoid structure \( (X, \Delta_X, \varepsilon_X) \) which is {\bf uniform}, that is,
    \begin{itemize}
        \item \( \Delta_I = \idd{I} \), \( \varepsilon_I = \idd{I} \), and
        \item for all pairs of objects \( X, Y \) in \( \C \), we have 
		\begin{equation*}
			\varepsilon_{X \otimes Y} = \varepsilon_X \otimes \varepsilon_Y \text{ and } \Delta_{X \otimes Y} = (\Delta_X \otimes \Delta_X) (X \otimes \sigma_{X, Y} \otimes Y) .
		\end{equation*}
        The latter equality is drawn as follows.
        \[ \begin{tikzpicture}[scale=\scl]
			\begin{pgfonlayer}{nodelayer}
				\node [style=circ] (2) at (-3, 1) {};
				\node [style=none] (3) at (-4, 1) {};
				\node [style=none] (4) at (-2, 1.5) {};
				\node [style=none] (5) at (-2, 0.5) {};
				\node [style=circ] (13) at (0.5, 0.5) {};
				\node [style=circ] (17) at (0.5, 1.5) {};
				\node [style=ox] (21) at (1.5, 1.5) {};
				\node [style=ox] (22) at (1.5, 0.5) {};
				\node [style=ox] (23) at (-0.25, 1) {};
				\node [style=none] (24) at (-1, 1) {};
				\node [style=none] (25) at (2, 0.5) {};
				\node [style=none] (26) at (2, 1.5) {};
				\node [style=none] (27) at (-4, 1.25) {$X \otimes Y$};
				\node [style=none] (28) at (-1.5, 1) {$=$};
				\node [style=none] (29) at (2.25, 1.75) {$X \otimes Y$};
				\node [style=none] (30) at (2.25, 0.75) {$X \otimes Y$.};
				\node [style=none] (31) at (0, 1.75) {$Y$};
				\node [style=none] (32) at (0, 0.25) {$X$};
			\end{pgfonlayer}
			\begin{pgfonlayer}{edgelayer}
				\draw[thick] (3.center) to (2);
				\draw [in=180, out=60, looseness=1.25, thick] (2) to (4.center);
				\draw [in=-180, out=-60, looseness=1.25, thick] (2) to (5.center);
				\draw[thick] (21) to (26.center);
				\draw[thick] (22) to (25.center);
				\draw[thick] (24.center) to (23);
				\draw [in=-180, out=60, looseness=1.25, thick] (23) to (17);
				\draw [in=-180, out=-60, looseness=1.25, thick] (23) to (13);
				\draw [bend left=45, looseness=1.25, thick] (17) to (21);
				\draw [bend right=45, thick] (13) to (22);
				\draw[thick] (17) to (22);
				\draw[thick] (13) to (21);
			\end{pgfonlayer}
		\end{tikzpicture} \]
    \end{itemize}
\end{dfn}

\noindent In the rest of the article, we write {\bf CDU} in place of uniform copy-discard.
For the rest of this section, we fix a CDU category \( (\C, \otimes, I, \sigma) \).

\begin{dfn}[\tc{Domain of definition}] 
	Let \( f \colon X \to Y \) be a morphism in \( \C \).
	The {\bf domain of definition} of \( f \) is the morphism \( \dom{f} \colon X \to X \) defined by \( \dom{f} \eqdef \Delta_X (X \otimes (f \varepsilon_Y))  \).
		\[ \begin{tikzpicture}[scale=\scl]
			\begin{pgfonlayer}{nodelayer}
				\node [style=circ] (2) at (-3, 1) {};
				\node [style=none] (3) at (-4, 1) {};
				\node [style=circ, scale=3] (4) at (-2, 0.5) {};
				\node [style=none] (5) at (-2, 1.5) {};
				\node [style=none] (6) at (-2, 0.5) {$f$};
				\node [style=circ] (7) at (-1, 0.5) {};
				\node [style=none] (8) at (-1, 1.5) {};
				\node [style=none] (9) at (-3.5, 1.25) {$X$};
				\node [style=none] (10) at (-1.25, 0.75) {$Y$};
			\end{pgfonlayer}
			\begin{pgfonlayer}{edgelayer}
				\draw[thick] (3.center) to (2);
				\draw[thick] [in=-180, out=-60, looseness=1.25] (2) to (4);
				\draw [in=180, out=60, looseness=1.25, thick] (2) to (5.center);
				\draw[thick] (4) to (7);
				\draw[thick] (5.center) to (8);
			\end{pgfonlayer}
		\end{tikzpicture}
		\]
\end{dfn}

\begin{lem} \label{Lem: DoD tensor}
	Let $f\colon A \to B$ and $g\colon C \to D$ be two morphisms in \( \C \).
	Then $\dom{f \otimes g} = \dom{f} \otimes \dom{g}$ \end{lem}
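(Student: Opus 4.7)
The plan is to unfold the definition of the domain of definition at $A \otimes C$ and then apply the uniformity axioms for $\Delta$ and $\varepsilon$ to rearrange the diagram into a tensor product of two domains of definition.

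First I would write
\begin{equation*}
    \dom{f \otimes g} = \Delta_{A \otimes C} \bigl((A \otimes C) \otimes ((f \otimes g) \, \varepsilon_{B \otimes D})\bigr)
\end{equation*}
and immediately replace $\varepsilon_{B \otimes D}$ with $\varepsilon_B \otimes \varepsilon_D$ and $\Delta_{A \otimes C}$ with $(\Delta_A \otimes \Delta_C)(A \otimes \sigma_{A, C} \otimes C)$, both by the uniformity clause of the CDU structure. At this point the diagram becomes: copy $A$ and copy $C$ independently, swap the two middle wires, apply $f \otimes g$ to the rightmost pair, and discard the result with $\varepsilon_B \otimes \varepsilon_D$.

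Next I would use functoriality of $\otimes$ (so that $(f \otimes g)(\varepsilon_B \otimes \varepsilon_D) = f\varepsilon_B \otimes g\varepsilon_D$) together with the interchange law for $\otimes$ to slide the swap past the parallel wires. Concretely, the composite
\begin{equation*}
    (A \otimes \sigma_{A,C} \otimes C) \bigl((A \otimes f\varepsilon_B) \otimes (C \otimes g\varepsilon_D)\bigr)
\end{equation*}
equals $(A \otimes C) \otimes (f\varepsilon_B \otimes g\varepsilon_D)$ rearranged by an appropriate symmetry — this is where one has to be careful about which wires carry $A$, $C$, $B$, $D$, but since the $B$ and $D$ wires are immediately discarded, the symmetry on them is absorbed into the discards (using naturality of $\sigma$ and $\sigma \,(\varepsilon \otimes \varepsilon) = \varepsilon \otimes \varepsilon$). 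The diagram then visibly splits as a tensor of two pieces: one on wires $A$ which is $\Delta_A(A \otimes f\varepsilon_B) = \dom{f}$, and one on wires $C$ which is $\Delta_C(C \otimes g\varepsilon_D) = \dom{g}$.

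The main obstacle is purely bookkeeping: tracking the four wires through the symmetry and confirming that the swap disappears after the discards are applied. Since $\varepsilon$ is natural with respect to the symmetry in the sense that $\sigma_{B,D}(\varepsilon_D \otimes \varepsilon_B) = \varepsilon_B \otimes \varepsilon_D$ (itself a consequence of uniformity applied to $\varepsilon_{B \otimes D} = \varepsilon_{D \otimes B}$ composed with the coherent isomorphism), this step is routine in string diagrams. I would present the argument as a short sequence of labelled string-diagram rewrites rather than an equational chain, since the visual manipulation is far more transparent than the coordinate-wise computation.
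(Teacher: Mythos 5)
Your proposal is correct and takes essentially the same route as the paper: the paper's proof is exactly this argument carried out as a wordless three-diagram string-diagram rewrite, unfolding \( \dom{f \otimes g} \) via the uniformity axioms for \( \Delta_{A \otimes C} \) and \( \varepsilon_{B \otimes D} \) and then sliding the \( f \)- and \( g \)-boxes past the crossing so the symmetry is absorbed into the discards, leaving \( \dom{f} \otimes \dom{g} \). The naturality-of-\( \sigma \) bookkeeping you flag is precisely what the middle diagram in the paper's proof handles.
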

\begin{proof}
\[ 
	\begin{tikzpicture}[scale=\scl]
		\begin{pgfonlayer}{nodelayer}
			\node [style=none] (1) at (-2.75, 1.25) {};
			\node [style=circ] (4) at (-2, 1.25) {};
			\node [style=none] (16) at (-0.5, 2) {};
			\node [style=circ] (19) at (0.25, 0.75) {};
			\node [style=circ, scale=2.5] (26) at (-0.5, 0.75) {};
			\node [style=none] (27) at (-0.5, 0.75) {$g$};
			\node [style=none] (28) at (0.25, 2) {};
			\node [style=none] (29) at (-2.75, 2) {};
			\node [style=circ] (30) at (-2, 2) {};
			\node [style=none] (31) at (-0.5, 2.75) {};
			\node [style=circ] (32) at (0.25, 1.5) {};
			\node [style=circ, scale=2.5] (33) at (-0.5, 1.5) {};
			\node [style=none] (34) at (-0.5, 1.5) {$f$};
			\node [style=none] (35) at (0.25, 2.75) {};
		\end{pgfonlayer}
		\begin{pgfonlayer}{edgelayer}
			\draw [style=thick, in=-180, out=60, looseness=1.25] (4) to (16.center);
			\draw [style=thick, in=-180, out=-45, looseness=1.25] (4) to (26);
			\draw [style=thick] (26) to (19);
			\draw [style=thick] (16.center) to (28.center);
			\draw [style=thick] (1.center) to (4);
			\draw [style=thick, in=-180, out=75, looseness=1.25] (30) to (31.center);
			\draw [style=thick, in=-180, out=-60] (30) to (33);
			\draw [style=thick] (33) to (32);
			\draw [style=thick] (31.center) to (35.center);
			\draw [style=thick] (29.center) to (30);
		\end{pgfonlayer}
	\end{tikzpicture} 
	= 
	\begin{tikzpicture}[scale=\scl]
		\begin{pgfonlayer}{nodelayer}
			\node [style=none] (1) at (-2.75, 1.25) {};
			\node [style=circ] (4) at (-2, 1.25) {};
			\node [style=none] (16) at (-0.5, 2) {};
			\node [style=circ] (19) at (0.25, 0.75) {};
			\node [style=circ, scale=2.5] (26) at (-0.5, 0.75) {};
			\node [style=none] (27) at (-0.5, 0.75) {$g$};
			\node [style=none] (28) at (0.25, 2) {};
			\node [style=none] (29) at (-2.75, 2) {};
			\node [style=circ] (30) at (-2, 2) {};
			\node [style=none] (31) at (-0.5, 3) {};
			\node [style=circ] (32) at (0.25, 2.5) {};
			\node [style=circ, scale=2.5] (33) at (-0.5, 2.5) {};
			\node [style=none] (34) at (-0.5, 2.5) {$f$};
			\node [style=none] (35) at (0.25, 3) {};
			\node [style=none] (36) at (-1.5, 1.5) {};
			\node [style=none] (37) at (-1, 2.5) {};
		\end{pgfonlayer}
		\begin{pgfonlayer}{edgelayer}
			\draw [style=thick, in=-180, out=60, looseness=1.25] (4) to (16.center);
			\draw [style=thick, in=-180, out=-45, looseness=1.25] (4) to (26);
			\draw [style=thick] (26) to (19);
			\draw [style=thick] (16.center) to (28.center);
			\draw [style=thick] (1.center) to (4);
			\draw [style=thick, in=-180, out=75, looseness=1.25] (30) to (31.center);
			\draw [style=thick] (33) to (32);
			\draw [style=thick] (31.center) to (35.center);
			\draw [style=thick] (29.center) to (30);
			\draw [style=thick, in=-180, out=-75] (30) to (36.center);
			\draw [style=thick, in=-180, out=0] (36.center) to (37.center);
			\draw [style=thick] (37.center) to (33);
		\end{pgfonlayer}
	\end{tikzpicture} 
	= 
	\begin{tikzpicture}[scale=\scl]
		\begin{pgfonlayer}{nodelayer}
			\node [style=none] (29) at (-2.75, 3) {};
			\node [style=circ] (30) at (-2, 3) {};
			\node [style=none] (31) at (-0.5, 3.5) {};
			\node [style=circ] (32) at (0.25, 2.5) {};
			\node [style=circ, scale=2.5] (33) at (-0.5, 2.5) {};
			\node [style=none] (34) at (-0.5, 2.5) {$f$};
			\node [style=none] (35) at (0.25, 3.5) {};
			\node [style=none] (36) at (-1, 2.5) {};
			\node [style=none] (37) at (-1, 2.5) {};
			\node [style=none] (38) at (-2.75, 1.5) {};
			\node [style=circ] (39) at (-2, 1.5) {};
			\node [style=none] (40) at (-0.5, 2) {};
			\node [style=circ] (41) at (0.25, 1) {};
			\node [style=circ, scale=2.5] (42) at (-0.5, 1) {};
			\node [style=none] (43) at (-0.5, 1) {$g$};
			\node [style=none] (44) at (0.25, 2) {};
			\node [style=none] (45) at (-1, 1) {};
			\node [style=none] (46) at (-1, 1) {};
		\end{pgfonlayer}
		\begin{pgfonlayer}{edgelayer}
			\draw [style=thick, in=-180, out=60] (30) to (31.center);
			\draw [style=thick] (33) to (32);
			\draw [style=thick] (31.center) to (35.center);
			\draw [style=thick] (29.center) to (30);
			\draw [style=thick, in=-180, out=-60, looseness=1.25] (30) to (36.center);
			\draw [style=thick] (37.center) to (33);
			\draw [style=thick, in=-180, out=60] (39) to (40.center);
			\draw [style=thick] (42) to (41);
			\draw [style=thick] (40.center) to (44.center);
			\draw [style=thick] (38.center) to (39);
			\draw [style=thick, in=-180, out=-60, looseness=1.25] (39) to (45.center);
			\draw [style=thick] (46.center) to (42);
		\end{pgfonlayer}
	\end{tikzpicture} \]
\end{proof}

\begin{dfn}[\tc{Deterministic, total, and quasi-total}] 
    Let \( f \colon X \to Y \) be a morphism in \( \C \).
	We say that \( f \) is
    \begin{itemize}
        \item {\bf deterministic} if the copy morphism is natural, that is, $\Delta_X (f \otimes f)  = f \Delta_Y$.
        \item {\bf total} if the discard morphism is natural, that is, \( f \varepsilon_Y = \varepsilon_X \).
        \item {\bf quasi-total} if $f = \dom{f}f$.
   \end{itemize}
   In string diagrams, we have respectively:
   \begin{align*}
	{\bf {[det]}}& \quad \quad 
		 \begin{tikzpicture}[scale=\scl]
			\begin{pgfonlayer}{nodelayer}
				\node [style=circ] (2) at (-3, 1) {};
				\node [style=none] (3) at (-4, 1) {};
				\node [style=circ, scale=3] (4) at (-2, 0.5) {};
				\node [style=circ, scale=3] (5) at (-2, 1.5) {};
				\node [style=none] (6) at (-2, 0.5) {$f$};
				\node [style=none] (7) at (-1, 0.5) {};
				\node [style=none] (9) at (-3.5, 1.25) {$X$};
				\node [style=none] (10) at (-1.5, 0.75) {$Y$};
				\node [style=none] (12) at (-2, 1.5) {$f$};
				\node [style=none] (13) at (-1, 1.5) {};
				\node [style=none] (14) at (-1.5, 1.75) {$Y$};
			\end{pgfonlayer}
			\begin{pgfonlayer}{edgelayer}
				\draw[thick] (3.center) to (2);
				\draw[thick] [in=-180, out=-60, looseness=1.25] (2) to (4);
				\draw[thick] [in=180, out=60, looseness=1.25] (2) to (5);
				\draw[thick] (4) to (7.center);
				\draw[thick] (5) to (13.center);
			\end{pgfonlayer}
		\end{tikzpicture} 
		= 
		\begin{tikzpicture}[scale=\scl]
			\begin{pgfonlayer}{nodelayer}
				\node [style=circ] (2) at (-3, 1) {};
				\node [style=circ,scale=3] (3) at (-3.75, 1) {};
				\node [style=none] (4) at (-2, 0.5) {};
				\node [style=none] (5) at (-2, 1.5) {};
				\node [style=none] (10) at (-2.25, 0.75) {$Y$};
				\node [style=none] (12) at (-3.75, 1) {$f$};
				\node [style=none] (14) at (-2.25, 1.75) {$Y$};
				\node [style=none] (15) at (-4.5, 1) {};
			\end{pgfonlayer}
			\begin{pgfonlayer}{edgelayer}
				\draw[thick] (3) to (2);
				\draw[thick] [in=-180, out=-60, looseness=1.25] (2) to (4.center);
				\draw [in=180, out=60, looseness=1.25, thick] (2) to (5.center);
				\draw[thick] (15.center) to (3);
			\end{pgfonlayer}
		\end{tikzpicture} \\
	 {\bf {[tot]}} & \quad \quad 
			\begin{tikzpicture}[scale=\scl]
				\begin{pgfonlayer}{nodelayer}
					\node [style=circ] (2) at (-3, 1) {};
					\node [style=circ, scale=3] (3) at (-3.75, 1) {};
					\node [style=none] (12) at (-3.75, 1) {$f$};
					\node [style=none] (15) at (-4.5, 1) {};
				\end{pgfonlayer}
				\begin{pgfonlayer}{edgelayer}
					\draw[thick] (3) to (2);
					\draw[thick] (15.center) to (3);
				\end{pgfonlayer}
			\end{tikzpicture} = 
			\begin{tikzpicture}[scale=\scl]
				\begin{pgfonlayer}{nodelayer}
					\node [style=circ] (0) at (2, 1) {};
					\node [style=none] (1) at (0.5, 1) {};
				\end{pgfonlayer}
				\begin{pgfonlayer}{edgelayer}
					\draw[thick] (1.center) to (0);
				\end{pgfonlayer}
			\end{tikzpicture} \\
	{\bf {[q-tot]}} & \quad \quad f = 
			\begin{tikzpicture}[scale=\scl]
				\begin{pgfonlayer}{nodelayer}
					\node [style=circ] (2) at (-3, 1) {};
					\node [style=none] (3) at (-4, 1) {};
					\node [style=circ, scale=3] (4) at (-2, 0.5) {};
					\node [style=none] (6) at (-2, 0.5) {$f$};
					\node [style=circ] (7) at (-1, 0.5) {};
					\node [style=none] (9) at (-3.5, 1.25) {$X$};
					\node [style=none] (10) at (-1.25, 0.75) {$Y$};
					\node [style=circ, scale=3] (11) at (-2, 1.5) {};
					\node [style=none] (12) at (-2, 1.5) {$f$};
					\node [style=none] (13) at (-1, 1.5) {};
				\end{pgfonlayer}
				\begin{pgfonlayer}{edgelayer}
					\draw[thick] (3.center) to (2);
					\draw[thick] [in=-180, out=-60, looseness=1.25] (2) to (4);
					\draw[thick] (4) to (7);
					\draw[thick] (11) to (13);
					\draw[thick] [in=-180, out=60, looseness=1.25] (2) to (11);
				\end{pgfonlayer}
			\end{tikzpicture}
   \end{align*}
\end{dfn} 

\begin{lem} \label{lem:det_are_q_tot}
	Let \( f \colon X \to Y \) be a deterministic morphism in \( C \). 
	Then \( f \) is quasi-total.
\end{lem}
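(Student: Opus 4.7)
The plan is to compute \( \dom{f} f \) directly, using in sequence the definition of \( \dom{f} \), the interchange law of the monoidal structure, the determinism hypothesis on \( f \), and finally the counit axiom of the comonoid. No induction or clever construction is needed: this is a one-shot string diagram manipulation.

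More concretely, first I would unfold \( \dom{f} f = \Delta_X (\idd{X} \otimes (f \varepsilon_Y)) f \). Then, applying the interchange law against the composite of \( \idd{X} \otimes (f \varepsilon_Y) \colon X \otimes X \to X \otimes I \cong X \) with \( f \), I would rewrite this as
\[
\dom{f} f \;=\; \Delta_X (f \otimes (f \varepsilon_Y)) \;=\; \Delta_X (f \otimes f)(\idd{Y} \otimes \varepsilon_Y).
\]
Pictorially, this is the step that slides the post-composed \( f \) up along the top wire coming out of \( \Delta_X \), so that both wires out of the copy carry an \( f \) before the right one is discarded.

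Next I would invoke determinism, \( \Delta_X (f \otimes f) = f \Delta_Y \), to obtain \( \dom{f} f = f \Delta_Y (\idd{Y} \otimes \varepsilon_Y) \). Finally, the counit axiom \( \Delta_Y (\idd{Y} \otimes \varepsilon_Y) = \idd{Y} \) collapses the right-hand side to \( f \), which is exactly the quasi-totality equation.

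There is really no main obstacle here beyond being careful with the order of composition (diagrammatic) and the bookkeeping of the unit isomorphism \( X \otimes I \cong X \); the whole argument is three rewrites in a string diagram, each one a named axiom. Given that the statement is a one-line consequence of the definitions, I would expect the author's proof to be essentially the same calculation presented as a chain of string diagrams.
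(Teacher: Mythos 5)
Your proposal is correct and is essentially the paper's own proof: the paper's string-diagram calculation is exactly your chain of rewrites, with the interchange/sliding step absorbed into the diagrammatic notation, followed by \textbf{[det]} and then \textbf{[counit]}. Nothing to add.
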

\begin{proof}
Using string diagrams, we have
	\[ 
	\begin{tikzpicture}[scale=\scl]
		\begin{pgfonlayer}{nodelayer}
			\node [style=circ] (2) at (-3, 1) {};
			\node [style=none] (3) at (-4, 1) {};
			\node [style=circ, scale=3] (4) at (-2, 0.5) {};
			\node [style=none] (6) at (-2, 0.5) {$f$};
			\node [style=circ] (7) at (-1, 0.5) {};
			\node [style=none] (9) at (-3.5, 1.25) {$X$};
			\node [style=none] (10) at (-1.25, 0.75) {$Y$};
			\node [style=circ, scale=3] (11) at (-2, 1.5) {};
			\node [style=none] (12) at (-2, 1.5) {$f$};
			\node [style=none] (13) at (-1, 1.5) {};
		\end{pgfonlayer}
		\begin{pgfonlayer}{edgelayer}
			\draw[thick] (3) to (2);
			\draw[thick] [in=-180, out=-60, looseness=1.25] (2) to (4);
			\draw[thick] (4) to (7);
			\draw[thick] (11) to (13);
			\draw[thick] [in=-180, out=60, looseness=1.25] (2) to (11);
		\end{pgfonlayer}
	\end{tikzpicture}
	\stackrel{\textbf{[det]}}{=} 
	\begin{tikzpicture}[scale=\scl]
		\begin{pgfonlayer}{nodelayer}
			\node [style=circ] (2) at (-3, 1) {};
			\node [style=circ,scale=3] (3) at (-3.75, 1) {};
			\node [style=circ] (4) at (-2, 0.5) {};
			\node [style=none] (5) at (-2, 1.5) {};
			\node [style=none] (10) at (-2.25, 0.75) {$Y$};
			\node [style=none] (12) at (-3.75, 1) {$f$};
			\node [style=none] (14) at (-2.25, 1.75) {$Y$};
			\node [style=none] (15) at (-4.5, 1) {};
		\end{pgfonlayer}
		\begin{pgfonlayer}{edgelayer}
			\draw[thick] (3) to (2);
			\draw[thick] [in=-180, out=-60, looseness=1.25] (2) to (4);
			\draw [in=180, out=60, looseness=1.25, thick] (2) to (5);
			\draw[thick] (15.center) to (3);
		\end{pgfonlayer}
	\end{tikzpicture} 
	\stackrel{\textbf{[counit]}}{=} 
	\begin{tikzpicture}[scale=\scl]
		\begin{pgfonlayer}{nodelayer}
			\node [style=none] (2) at (-3, 1) {};
			\node [style=circ, scale=3] (3) at (-3.75, 1) {};
			\node [style=none] (12) at (-3.75, 1) {$f$};
			\node [style=none] (15) at (-4.5, 1) {};
		\end{pgfonlayer}
		\begin{pgfonlayer}{edgelayer}
			\draw [style=thick] (15) to (3);
			\draw [style=thick] (3) to (2);
		\end{pgfonlayer}
	\end{tikzpicture} \]
\end{proof}

\begin{lem} \label{Lem: total restriction is identity}
	Let $f \colon X \to Y$ be a total morphism in \( \C \).
	Then $\dom{f} = \idd{X}$.
\end{lem}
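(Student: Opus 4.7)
The plan is to unfold the definition of \(\dom{f}\) and apply the two available hypotheses in sequence: first totality of \(f\), which replaces the composite \(f\varepsilon_Y\) on the lower branch by \(\varepsilon_X\), and then the counit axiom, which collapses \(\Delta_X(X \otimes \varepsilon_X)\) to the identity on \(X\).

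More concretely, I would start from
\begin{equation*}
\dom{f} = \Delta_X \bigl(X \otimes (f\varepsilon_Y)\bigr),
\end{equation*}
drawn as a copy followed by \(f\) and then a discard on one of the output wires. Applying the defining equation \textbf{[tot]} for totality, namely \(f\varepsilon_Y = \varepsilon_X\), rewrites the diagram to \(\Delta_X(X \otimes \varepsilon_X)\). Finally, the \textbf{[counit]} axiom gives \(\Delta_X(X \otimes \varepsilon_X) = \idd{X}\), which is the desired equality. The whole argument is a two-step rewrite in string diagrams, entirely parallel to (and shorter than) the proof of Lemma \ref{lem:det_are_q_tot}.

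There is no real obstacle here: the lemma is essentially immediate once the definitions are unfolded, and both ingredients (totality and the counit law) are primitive. The only thing to be mildly careful about is distinguishing the discard \(\varepsilon_Y\) that appears inside \(\dom{f}\) from the discard \(\varepsilon_X\) produced after invoking totality, so that the counit axiom is being applied on the correct object \(X\). A compact string-diagrammatic presentation of the form
\begin{equation*}
\dom{f} \;=\; \Delta_X(X \otimes f\varepsilon_Y) \;\stackrel{\textbf{[tot]}}{=}\; \Delta_X(X \otimes \varepsilon_X) \;\stackrel{\textbf{[counit]}}{=}\; \idd{X}
\end{equation*}
suffices.
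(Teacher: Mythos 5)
Your proof is correct and is exactly the paper's argument: the paper's string-diagrammatic proof rewrites \(\dom{f}\) via \textbf{[tot]} to \(\Delta_X(X \otimes \varepsilon_X)\) and then collapses it to the identity by the counit axiom. Nothing further is needed.
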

\begin{proof}
	\[ \begin{tikzpicture}[scale=\scl]
		\begin{pgfonlayer}{nodelayer}
			\node [style=circ] (2) at (-3, 1) {};
			\node [style=none] (3) at (-4, 1) {};
			\node [style=circ, scale=3] (4) at (-2, 0.5) {};
			\node [style=none] (5) at (-2, 1.5) {};
			\node [style=none] (6) at (-2, 0.5) {$f$};
			\node [style=circ] (7) at (-1, 0.5) {};
			\node [style=none] (8) at (-1, 1.5) {};
			\node [style=none] (9) at (-3.5, 1.25) {$X$};
			\node [style=none] (10) at (-1.25, 0.75) {$Y$};
		\end{pgfonlayer}
		\begin{pgfonlayer}{edgelayer}
			\draw[thick] (3.center) to (2);
			\draw[thick] [in=-180, out=-60, looseness=1.25] (2) to (4);
			\draw [in=180, out=60, looseness=1.25, thick] (2) to (5.center);
			\draw[thick] (4) to (7);
			\draw[thick] (5.center) to (8);
		\end{pgfonlayer}
	\end{tikzpicture} 
	\stackrel{\textbf{[tot]}}{=} 
	\begin{tikzpicture}[scale=\scl]
		\begin{pgfonlayer}{nodelayer}
			\node [style=circ] (2) at (-3, 1) {};
			\node [style=none] (3) at (-4, 1) {};
			\node [style=none] (4) at (-2, 1.5) {};
			\node [style=circ] (5) at (-2, 0.5) {};
		\end{pgfonlayer}
		\begin{pgfonlayer}{edgelayer}
			\draw [thick] (3.center) to (2);
			\draw [thick, in=180, out=60, looseness=1.25] (2) to (4.center);
			\draw [thick, in=-180, out=-60, looseness=1.25] (2) to (5);
		\end{pgfonlayer}
	\end{tikzpicture} 
	= 
	\begin{tikzpicture}[scale=\scl]
		\begin{pgfonlayer}{nodelayer}
			\node [style=none] (10) at (1.5, 1) {};
			\node [style=none] (11) at (3.25, 1) {};
		\end{pgfonlayer}
		\begin{pgfonlayer}{edgelayer}
			\draw [thick] (10.center) to (11.center);
		\end{pgfonlayer}
	\end{tikzpicture}
	\]
\end{proof} 

\begin{lem}\label{lem:det_tot_composition}
	Let $f, g$ be two morphisms in \( \C \).
	Then, 
	\begin{enumerate}
		\item if $f$ and $g$ are deterministic, then $fg$ is deterministic;
		\item if $f$ and $g$ are total, then $fg$ is total. 
		\item if $g$ is total and $fg$ is total, then $f$ is total.
	\end{enumerate}
\end{lem}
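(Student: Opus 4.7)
The three statements are straightforward consequences of the definitions, so the plan is to chain the appropriate axioms directly; no heavy machinery is required. The main subtlety will just be keeping the interchange law of the tensor in mind for part (1).

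For part (1), with $f \colon X \to Y$ and $g \colon Y \to Z$ both deterministic, I would start from $\Delta_X ((fg) \otimes (fg))$, rewrite the tensor product of composites using the functoriality (interchange) of $\otimes$ as $\Delta_X (f \otimes f)(g \otimes g)$, then apply determinism of $f$ to slide the copy past $f$ to get $f \Delta_Y (g \otimes g)$, and finally apply determinism of $g$ to obtain $f g \Delta_Z$. Diagrammatically this is simply pushing the copy node rightward past $f$ and then $g$. So the equation $\Delta_X ((fg) \otimes (fg)) = fg \Delta_Z$ holds as required.

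For part (2), with $f \colon X \to Y$ and $g \colon Y \to Z$ both total, I would compute $(fg) \varepsilon_Z = f (g \varepsilon_Z) = f \varepsilon_Y = \varepsilon_X$, applying totality of $g$ and then of $f$ in turn.

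For part (3), given that $g$ and $fg$ are total, the argument is essentially to read the computation above in reverse: we have $\varepsilon_X = (fg) \varepsilon_Z = f (g \varepsilon_Z) = f \varepsilon_Y$, the last equality using totality of $g$. This yields $f \varepsilon_Y = \varepsilon_X$, which is exactly what it means for $f$ to be total. The only ``obstacle'' here is noticing that totality of $f$ alone does not follow from totality of $fg$ without the extra hypothesis on $g$: without $g \varepsilon_Z = \varepsilon_Y$ we would be stuck with $f g \varepsilon_Z$ and no way to strip off $g$.
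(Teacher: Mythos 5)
Your proposal is correct and follows essentially the same route as the paper: part (1) is the paper's string-diagram computation (sliding the copy node past $f$ and then $g$ via functoriality of $\otimes$ and the two determinism hypotheses) written algebraically, and parts (2) and (3) are exactly the paper's direct computations with the discard morphism. No gaps.
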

\begin{proof}
Suppose first that $f$ and $g$ are deterministic. Then,
\[ 
	\begin{tikzpicture}[scale=\scl]
		\begin{pgfonlayer}{nodelayer}
			\node [style=none] (1) at (-3.5, 2) {};
			\node [style=circ] (2) at (-1.25, 2) {};
			\node [style=none] (3) at (-0.25, 2.5) {};
			\node [style=none] (4) at (-0.25, 1.5) {};
			\node [style=circ, scale=2.5] (5) at (-2.75, 2) {};
			\node [style=circ, scale=2.5] (6) at (-2, 2) {};
			\node [style=none] (7) at (-2.75, 2) {$f$};
			\node [style=none] (8) at (-2, 2) {$g$};
		\end{pgfonlayer}
		\begin{pgfonlayer}{edgelayer}
			\draw [thick, in=-180, out=60, looseness=1.25] (2) to (3.center);
			\draw [thick, in=180, out=-60, looseness=1.25] (2) to (4.center);
			\draw [thick] (1.center) to (5);
			\draw [thick] (5) to (6);
			\draw [thick] (6) to (2);
		\end{pgfonlayer}
	\end{tikzpicture}
	= 
	\begin{tikzpicture}[scale=\scl]
		\begin{pgfonlayer}{nodelayer}
			\node [style=none] (1) at (-3.5, 2) {};
			\node [style=circ] (2) at (-2, 2) {};
			\node [style=circ, scale=2.5] (3) at (-1.25, 2.5) {};
			\node [style=circ, scale=2.5] (4) at (-1.25, 1.5) {};
			\node [style=circ, scale=2.5] (5) at (-2.75, 2) {};
			\node [style=none] (8) at (-0.75, 2.5) {};
			\node [style=none] (9) at (-0.75, 1.5) {};
			\node [style=none] (11) at (-2.75, 2) {$f$};
			\node [style=none] (13) at (-1.25, 1.5) {$g$};
			\node [style=none] (14) at (-1.25, 2.5) {$g$};
		\end{pgfonlayer}
		\begin{pgfonlayer}{edgelayer}
			\draw [thick] [in=-180, out=60, looseness=1.25] (2) to (3);
			\draw [thick][in=180, out=-60, looseness=1.25] (2) to (4);
			\draw [thick](1.center) to (5);
			\draw [thick](3) to (8.center);
			\draw [thick](4) to (9.center);
			\draw [thick](5) to (2);
		\end{pgfonlayer}
	\end{tikzpicture}
	=
	\begin{tikzpicture}[scale=\scl]
		\begin{pgfonlayer}{nodelayer}
			\node [style=none] (1) at (-3, 2) {};
			\node [style=circ] (2) at (-2, 2) {};
			\node [style=circ, scale=2.5] (3) at (-1.25, 2.5) {};
			\node [style=circ, scale=2.5] (4) at (-1.25, 1.5) {};
			\node [style=circ, scale=2.5] (8) at (-0.5, 2.5) {};
			\node [style=circ, scale=2.5] (9) at (-0.5, 1.5) {};
			\node [style=none] (13) at (-1.25, 1.5) {$f$};
			\node [style=none] (14) at (-1.25, 2.5) {$f$};
			\node [style=none] (15) at (0.25, 2.5) {};
			\node [style=none] (16) at (-0.5, 1.5) {$g$};
			\node [style=none] (17) at (-0.5, 2.5) {$g$};
			\node [style=none] (18) at (0.25, 1.5) {};
		\end{pgfonlayer}
		\begin{pgfonlayer}{edgelayer}
			\draw [thick, in=-180, out=60, looseness=1.25] (2) to (3);
			\draw [thick, in=180, out=-60, looseness=1.25] (2) to (4);
			\draw [style=thick](3) to (8);
			\draw [style=thick](4) to (9);
			\draw [style=thick] (1.center) to (2);
			\draw [style=thick] (8) to (15.center);
			\draw [style=thick] (9) to (18.center);
		\end{pgfonlayer}
	\end{tikzpicture}
\]
The second statement is proven similarly. 
Finally, assume that $fg$ and $g$ are total. 
Then, 
\(
	\begin{tikzpicture}[scale=\scl]
		\begin{pgfonlayer}{nodelayer}
			\node [style=circ] (0) at (0, 3) {};
			\node [style=none] (1) at (-2, 3) {};
			\node [style=circ, scale=2.5] (2) at (-1, 3) {};
			\node [style=none] (3) at (-1, 3) {$f$};
		\end{pgfonlayer}
		\begin{pgfonlayer}{edgelayer}
			\draw [style=thick] (1.center) to (2);
			\draw [style=thick] (2) to (0);
		\end{pgfonlayer}
	\end{tikzpicture} 
	= 
	\begin{tikzpicture}[scale=\scl]
		\begin{pgfonlayer}{nodelayer}
			\node [style=circ] (0) at (0, 3) {};
			\node [style=none] (1) at (-2, 3) {};
			\node [style=circ, scale=2.5] (2) at (-1.5, 3) {};
			\node [style=none] (3) at (-1.5, 3) {$f$};
			\node [style=circ, scale=2.5] (4) at (-0.75, 3) {};
			\node [style=none] (5) at (-0.75, 3) {$g$};
		\end{pgfonlayer}
		\begin{pgfonlayer}{edgelayer}
			\draw [style=thick] (1.center) to (2);
			\draw [style=thick] (2) to (4);
			\draw [style=thick] (4) to (0);
		\end{pgfonlayer}
	\end{tikzpicture} 
	= 
	\begin{tikzpicture}[scale=\scl]
		\begin{pgfonlayer}{nodelayer}
			\node [style=circ] (0) at (0, 3) {};
			\node [style=none] (1) at (-2, 3) {};
		\end{pgfonlayer}
		\begin{pgfonlayer}{edgelayer}
			\draw [style=thick] (1.center) to (0);
		\end{pgfonlayer}
	\end{tikzpicture}
\).
This concludes the proof.
\end{proof}

\noindent The proof of the following result is left as an exercise to the reader.
\begin{lem} \label{lem:deterministic_tensor}
	Let $f, g$ be two morphisms in \( \C \).
	Then 
	\begin{enumerate}
		\item if $f$ and $g$ are deterministic, then $f \otimes g$ is deterministic;
		\item if $f$ and $g$ are total, then $f \otimes g$ is total;
		\item if $f$ and $g$ are quasi-total, then $f \otimes g$ is quasi-total.
	\end{enumerate}
\end{lem}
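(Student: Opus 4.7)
The plan is to dispatch the three claims separately, using the uniformity axiom of the CDU structure together with the previously established Lemma \ref{Lem: DoD tensor}. I expect none of the three to be genuinely difficult, and the lemma is explicitly marked as an exercise, but the first part requires a slightly more involved string-diagrammatic manipulation than the other two.

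For statement (2), the argument is essentially one line: by uniformity, $\varepsilon_{A \otimes C} = \varepsilon_A \otimes \varepsilon_C$ and $\varepsilon_{B \otimes D} = \varepsilon_B \otimes \varepsilon_D$, so the bifunctoriality of $\otimes$ gives $(f \otimes g)\varepsilon_{B \otimes D} = (f \otimes g)(\varepsilon_B \otimes \varepsilon_D) = (f\varepsilon_B) \otimes (g\varepsilon_D) = \varepsilon_A \otimes \varepsilon_C = \varepsilon_{A \otimes C}$, where the penultimate step uses totality of $f$ and $g$.

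For statement (3), Lemma \ref{Lem: DoD tensor} already shows $\dom{f \otimes g} = \dom{f} \otimes \dom{g}$. Then quasi-totality of $f$ and $g$ gives $f \otimes g = (\dom{f}f) \otimes (\dom{g}g) = (\dom{f} \otimes \dom{g})(f \otimes g) = \dom{f \otimes g}(f \otimes g)$, using bifunctoriality of $\otimes$ in the middle step; so $f \otimes g$ is quasi-total.

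The main, though still routine, work is in (1). Starting from $\Delta_{A \otimes C}((f \otimes g) \otimes (f \otimes g))$, I would first expand $\Delta_{A \otimes C}$ via the uniformity axiom as $(\Delta_A \otimes \Delta_C)(A \otimes \sigma_{A,C} \otimes C)$. The naturality of the braiding $\sigma$ allows the four wires carrying $f$, $f$, $g$, $g$ to be slid past the swap, rearranging the composite into $(\Delta_A(f \otimes f)) \otimes (\Delta_C(g \otimes g))$ up to a braiding. Applying determinism of $f$ and $g$ separately then gives $(f\Delta_B) \otimes (g\Delta_D)$, and one more use of uniformity (this time on $\Delta_{B \otimes D}$) reassembles this as $(f \otimes g)\Delta_{B \otimes D}$. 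The only real care is in bookkeeping the swap wires correctly on both sides; the string-diagrammatic version makes this transparent since the two occurrences of $\sigma_{-,-}$ on the two sides of the equation are forced to match by the shape of $\Delta_{A \otimes C}$ and $\Delta_{B \otimes D}$.
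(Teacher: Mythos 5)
Your proof is correct; the paper itself leaves this lemma as an exercise, so there is no official proof to compare against, but your argument is the expected one: uniformity of the copy-discard structure plus bifunctoriality of $\otimes$ for (1) and (2), Lemma \ref{Lem: DoD tensor} for (3), and naturality of the braiding to slide $f, f, g, g$ past $\sigma_{A,C}$ in the determinism case.
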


\begin{dfn} [\tc{Partial isomorphism}]
    Let \( f \colon X \to Y \) be a morphism in \( \C \).
    We say that \( f \) is a {\bf partial isomorphism} if there exists a morphism \( f^r \colon Y \to X \) such that 
	\[ f f^r  = \dom{f} \quad \text{ and } \quad  f^r f = \dom{f^r}. \]
    In that case, \( f^r \) is a called a {\bf partial inverse} of \( f \).
\end{dfn}

Note that, if \( f^r \) is a partial inverse of \( f \), then by symmetry of the definition, \( f^r \) is a partial inverse of \( f \).

\begin{rmk}\label{Rmk:total-partial-isos}
	 If $f$ is total, then by Lemma \ref{Lem: total restriction is identity} we have that $\dom{f}=\idd{}$. This makes $f$ an isometry, that is, $ff^r = \dom{f} = \idd{}$. If $f^r$ is also a total map, then $f^r f = \dom{f^r}  = \idd{} $, thus $f$ is an isomorphism. 
\end{rmk}

\noindent In a CDU category, partial isomorphisms generally do not compose in general. 
However, they do compose when they are deterministic, see \cite[Lemma 3.5.2 - (iii)]{BrettThesis}, a result which we reproduce next.

\begin{lem}\label{lem:partial_iso_compose}  
	Let $f \colon X \to Y$ and $g \colon X \to Y$ be deterministic partial isomorphisms in \( \C \) with deterministic partial inverse.
	Then $fg \colon X \to Z$ is a deterministic partial isomorphism. 
\end{lem}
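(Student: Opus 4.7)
The statement, as written with $f, g \colon X \to Y$ but $fg \colon X \to Z$, contains a typo; I will read it as $f \colon X \to Y$ and $g \colon Y \to Z$.

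My plan is to produce an explicit partial inverse, namely the composite $g^r f^r \colon Z \to X$, and to verify the two equations in the definition of a partial isomorphism. The first observation is essentially bookkeeping: since $f$ and $g$ are deterministic, so is $fg$ by Lemma \ref{lem:det_tot_composition}(1), and since $f^r$ and $g^r$ are deterministic, so is $g^r f^r$. Thus any partial inverse we obtain will automatically be deterministic, and the conclusion will be that $fg$ is a deterministic partial isomorphism with deterministic partial inverse $g^r f^r$, matching the shape of the hypotheses.

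I then need to check $(fg)(g^r f^r) = \dom{fg}$. Using the defining equation $gg^r = \dom{g}$, this reduces to the identity $f \dom{g} f^r = \dom{fg}$. I will prove this by unfolding $\dom{g} = \Delta_Y(Y \otimes g\varepsilon_Z)$, pushing $f$ through the copy using determinism $f \Delta_Y = \Delta_X(f \otimes f)$, and then postcomposing with $f^r$ to combine the two copies of $f$ into $ff^r = \dom{f} = \Delta_X(X \otimes f\varepsilon_Y)$. After coassociativity, this produces
\begin{equation*}
\Delta_X(X \otimes \Delta_X)(X \otimes f\varepsilon_Y \otimes fg\varepsilon_Z) = \Delta_X\bigl(X \otimes \Delta_X(f\varepsilon_Y \otimes fg\varepsilon_Z)\bigr),
\end{equation*}
and then applying determinism of $f$ once more together with the counit axiom $\Delta_Y(\varepsilon_Y \otimes Y) = \idd{Y}$ collapses the inner expression to $fg\varepsilon_Z$. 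This yields exactly $\Delta_X(X \otimes fg\varepsilon_Z) = \dom{fg}$. I expect this calculation, which is the heart of the proof, to go much more transparently in string diagrams than in equational form.

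For the second equation $(g^r f^r)(fg) = \dom{g^r f^r}$, the proof is entirely symmetric: since $g$ and $f$ are themselves deterministic partial inverses of the deterministic maps $g^r$ and $f^r$, the same argument applied with the roles of $(f,g)$ and $(f^r, g^r)$ swapped gives $g^r \dom{f^r} g = \dom{g^r f^r}$, and the left-hand side equals $(g^r f^r)(fg)$ by $f^r f = \dom{f^r}$. The only real obstacle in the whole argument is the diagrammatic manipulation above; it is a standard CDU identity, but requires coordinating determinism, coassociativity, and counitality in the right order.
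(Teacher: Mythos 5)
Your proposal is correct and follows essentially the same route as the paper: the same candidate inverse $g^r f^r$, the same reduction of the first equation to $f\,\dom{g}\,f^r = \dom{fg}$ via $gg^r = \dom{g}$, the same CDU manipulation (the paper packages your final determinism-plus-counit step as the quasi-totality identity $\dom{f}f = f$ from Lemma \ref{lem:det_are_q_tot}, but it is the same computation), and the same symmetry argument instantiating the first identity at the pair $(g^r, f^r)$ to obtain the second. Your reading of the statement's typo ($g \colon Y \to Z$) also matches the paper's intent.
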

\begin{proof}
	That \( fg \) is deterministic follows from Lemma \ref{lem:det_tot_composition}.
	Let $f^r$ and $g^r$ be partial inverses of $f$ and $g$ respectively. 
	We show that $fg$ is a partial isomorphism with partial inverse $g^r f^r$. 
	That is, we need to show that 
	\begin{equation*}
		fgg^rf^r = \dom{fg} \quad\text{ and } \quad g^rf^rfg = \dom{g^rf^r}.
	\end{equation*}
	Now \( fg g^r f^r = f \dom{g} f^r \) and by Lemma \ref{lem:det_are_q_tot}, \( f \) is quasi-total.
	Thus,
	\begin{align*}
		&f \dom{g} f^r =
		\begin{tikzpicture}[scale=\scl]
			\begin{pgfonlayer}{nodelayer}
				\node [style=none] (0) at (-3.5, 2) {};
				\node [style=circ, scale=2.5] (1) at (-3, 2) {};
				\node [style=circ] (2) at (-2.25, 2) {};
				\node [style=circ, scale=2.5] (3) at (-1.5, 2.5) {};
				\node [style=circ, scale=2.5] (4) at (-1.5, 1.5) {};
				\node [style=circ] (5) at (-0.75, 1.5) {};
				\node [style=none] (6) at (-0.75, 2.5) {};
				\node [style=none] (8) at (-3, 2) {$f$};
				\node [style=none] (9) at (-1.5, 2.5) {$f^r$};
				\node [style=none] (10) at (-1.5, 1.5) {$g$};
			\end{pgfonlayer}
			\begin{pgfonlayer}{edgelayer}
				\draw [style=thick] (0.center) to (1);
				\draw [style=thick] (1) to (2);
				\draw [style=thick, in=-180, out=75, looseness=1.25] (2) to (3);
				\draw [style=thick, in=-180, out=-75, looseness=1.25] (2) to (4);
				\draw [style=thick] (3) to (6.center);
				\draw [style=thick] (4) to (5);
			\end{pgfonlayer}
		\end{tikzpicture}
		\stackrel{\textbf{[det]}}= 
		\begin{tikzpicture}[scale=\scl]
			\begin{pgfonlayer}{nodelayer}
				\node [style=none] (0) at (-3, 2) {};
				\node [style=circ] (2) at (-2.25, 2) {};
				\node [style=circ, scale=2.5] (3) at (-1.5, 2.5) {};
				\node [style=circ, scale=2.5] (4) at (-1.5, 1.5) {};
				\node [style=none] (8) at (-1.5, 2.5) {$f$};
				\node [style=circ, scale=2.5] (11) at (-0.75, 2.5) {};
				\node [style=circ, scale=2.5] (12) at (-0.75, 1.5) {};
				\node [style=circ] (13) at (0.25, 1.5) {};
				\node [style=none] (14) at (0.25, 2.5) {};
				\node [style=none] (9) at (-0.75, 2.5) {$f^r$};
				\node [style=none] (10) at (-0.75, 1.5) {$g$};
				\node [style=none] (15) at (-1.5, 1.5) {$f$};
			\end{pgfonlayer}
			\begin{pgfonlayer}{edgelayer}
				\draw [style=thick, in=-180, out=75, looseness=1.25] (2) to (3);
				\draw [style=thick, in=-180, out=-75, looseness=1.25] (2) to (4);
				\draw [style=thick] (3) to (11);
				\draw [style=thick] (4) to (12);
				\draw [style=thick] (12) to (13);
				\draw [style=thick] (11) to (14.center);
				\draw [style=thick] (0.center) to (2);
			\end{pgfonlayer}
		\end{tikzpicture} 
		\stackrel{\textbf{[q-tot]}}= 
		\begin{tikzpicture}[scale=\scl]
			\begin{pgfonlayer}{nodelayer}
				\node [style=none] (0) at (-3, 2) {};
				\node [style=circ] (2) at (-2.25, 2) {};
				\node [style=circ, scale=2.5] (4) at (-1.5, 1.5) {};
				\node [style=circ, scale=2.5] (12) at (-0.75, 1.5) {};
				\node [style=circ] (13) at (0.25, 1.5) {};
				\node [style=none] (10) at (-0.75, 1.5) {$g$};
				\node [style=none] (15) at (-1.5, 1.5) {$f$};
				\node [style=circ] (17) at (-1.25, 2.5) {};
				\node [style=none] (18) at (0.25, 3) {};
				\node [style=circ, scale=2.5] (19) at (-0.5, 2) {};
				\node [style=none] (21) at (-0.5, 2) {$f$};
				\node [style=circ] (22) at (0.25, 2) {};
			\end{pgfonlayer}
			\begin{pgfonlayer}{edgelayer}
				\draw [style=thick, in=-180, out=-75, looseness=1.25] (2) to (4);
				\draw [style=thick] (4) to (12);
				\draw [style=thick] (12) to (13);
				\draw [style=thick] (0.center) to (2);
				\draw [style=thick, in=-180, out=75] (17) to (18.center);
				\draw [style=thick, in=-180, out=-75, looseness=1.25] (17) to (19);
				\draw [style=thick] (19) to (22);
				\draw [style=thick, in=180, out=75, looseness=1.25] (2) to (17);
			\end{pgfonlayer}
		\end{tikzpicture} \\
		&\stackrel{\textbf{[assoc]}}= 
		\begin{tikzpicture}[scale=\scl]
			\begin{pgfonlayer}{nodelayer}
				\node [style=circ] (2) at (-2.25, 2) {};
				\node [style=circ, scale=2.5] (4) at (-1.5, 1.5) {};
				\node [style=circ, scale=2.5] (12) at (-0.75, 1.5) {};
				\node [style=circ] (13) at (0, 1.5) {};
				\node [style=none] (10) at (-0.75, 1.5) {$g$};
				\node [style=none] (15) at (-1.5, 1.5) {$f$};
				\node [style=circ, scale=2.5] (17) at (-1.25, 2.5) {};
				\node [style=circ] (18) at (0, 2.5) {};
				\node [style=none] (19) at (-1.25, 2.5) {$f$};
				\node [style=circ] (20) at (-3, 2.75) {};
				\node [style=none] (21) at (-2, 3.5) {};
				\node [style=none] (22) at (0, 3.5) {};
				\node [style=none] (23) at (-3.75, 2.75) {};
			\end{pgfonlayer}
			\begin{pgfonlayer}{edgelayer}
				\draw [style=thick, in=-180, out=-75, looseness=1.25] (2) to (4);
				\draw [style=thick] (4) to (12);
				\draw [style=thick] (12) to (13);
				\draw [style=thick, in=180, out=75, looseness=1.25] (2) to (17);
				\draw [style=thick] (17) to (18);
				\draw [style=thick] (23.center) to (20);
				\draw [style=thick, in=180, out=-60] (20) to (2);
				\draw [style=thick, in=-180, out=60, looseness=1.25] (20) to (21.center);
				\draw [style=thick] (21.center) to (22.center);
			\end{pgfonlayer}
		\end{tikzpicture} 
		\stackrel{\textbf{[comm]}} = 
		\begin{tikzpicture}[scale=\scl]
			\begin{pgfonlayer}{nodelayer}
				\node [style=circ] (2) at (-2.25, 2) {};
				\node [style=circ, scale=2.5] (4) at (-1.25, 2.5) {};
				\node [style=circ, scale=2.5] (12) at (-0.5, 2.5) {};
				\node [style=circ] (13) at (0.25, 2.5) {};
				\node [style=none] (10) at (-0.5, 2.5) {$g$};
				\node [style=none] (15) at (-1.25, 2.5) {$f$};
				\node [style=circ, scale=2.5] (17) at (-1.25, 1.5) {};
				\node [style=circ] (18) at (0.25, 1.5) {};
				\node [style=none] (19) at (-1.25, 1.5) {$f$};
				\node [style=circ] (20) at (-3, 2.75) {};
				\node [style=none] (21) at (-2, 3.5) {};
				\node [style=none] (22) at (0.25, 3.5) {};
				\node [style=none] (23) at (-3.75, 2.75) {};
			\end{pgfonlayer}
			\begin{pgfonlayer}{edgelayer}
				\draw [style=thick, in=-180, out=-60, looseness=1.25] (2) to (4);
				\draw [style=thick] (4) to (12);
				\draw [style=thick] (12) to (13);
				\draw [style=thick, in=180, out=60, looseness=1.25] (2) to (17);
				\draw [style=thick] (17) to (18);
				\draw [style=thick] (23.center) to (20);
				\draw [style=thick, in=180, out=-60] (20) to (2);
				\draw [style=thick, in=-180, out=60, looseness=1.25] (20) to (21.center);
				\draw [style=thick] (21.center) to (22.center);
			\end{pgfonlayer}
		\end{tikzpicture} 
		= 
		\begin{tikzpicture}[scale=\scl]
			\begin{pgfonlayer}{nodelayer}
				\node [style=circ] (2) at (-2.25, 2) {};
				\node [style=circ, scale=2.5] (4) at (-1.25, 2.5) {};
				\node [style=circ, scale=2.5] (12) at (-0.5, 2.5) {};
				\node [style=circ] (13) at (0.25, 2.5) {};
				\node [style=none] (10) at (-0.5, 2.5) {$g$};
				\node [style=none] (15) at (-1.25, 2.5) {$f$};
				\node [style=circ, scale=2.5] (17) at (-1.25, 1.5) {};
				\node [style=circ] (18) at (0.25, 1.5) {};
				\node [style=none] (19) at (-1.25, 1.5) {$f$};
				\node [style=circ] (20) at (-3, 2.75) {};
				\node [style=none] (21) at (-2, 3.5) {};
				\node [style=none] (22) at (0.25, 3.5) {};
				\node [style=none] (23) at (-3.75, 2.75) {};
			\end{pgfonlayer}
			\begin{pgfonlayer}{edgelayer}
				\draw [style=thick] (4) to (12);
				\draw [style=thick] (12) to (13);
				\draw [style=thick] (17) to (18);
				\draw [style=thick] (23.center) to (20);
				\draw [style=thick, in=180, out=-60] (20) to (2);
				\draw [style=thick, in=-180, out=60, looseness=1.25] (20) to (21.center);
				\draw [style=thick] (21.center) to (22.center);
				\draw [style=thick, in=-180, out=60, looseness=1.50] (2) to (4);
				\draw [style=thick, in=180, out=-60, looseness=1.25] (2) to (17);
			\end{pgfonlayer}
		\end{tikzpicture} \\
		&\stackrel{\textbf{[q-tot]}}= 
		\begin{tikzpicture}[scale=\scl]
			\begin{pgfonlayer}{nodelayer}
				\node [style=circ, scale=2.5] (4) at (-2, 2.25) {};
				\node [style=circ, scale=2.5] (12) at (-1.25, 2.25) {};
				\node [style=circ] (13) at (-0.5, 2.25) {};
				\node [style=none] (10) at (-1.25, 2.25) {$g$};
				\node [style=none] (15) at (-2, 2.25) {$f$};
				\node [style=circ] (20) at (-3, 2.75) {};
				\node [style=none] (21) at (-2, 3.25) {};
				\node [style=none] (22) at (-0.5, 3.25) {};
				\node [style=none] (23) at (-3.75, 2.75) {};
			\end{pgfonlayer}
			\begin{pgfonlayer}{edgelayer}
				\draw [style=thick] (4) to (12);
				\draw [style=thick] (12) to (13);
				\draw [style=thick] (23.center) to (20);
				\draw [style=thick, in=-180, out=60, looseness=1.25] (20) to (21.center);
				\draw [style=thick] (21.center) to (22.center);
				\draw [style=thick, in=-165, out=-60, looseness=1.25] (20) to (4);
			\end{pgfonlayer}
		\end{tikzpicture}
	\end{align*}
	This proves the first equation for all pair of composable deterministic partial isomorphisms \( (f, g) \).
	Instantiating this equation with the pair of deterministic partial isomorphisms \( (g^r, f^r) \), we get the second one.
	This concludes the proof. 
\end{proof}
                        
\begin{lem} \label{lem:partial_iso_tensor}
	Let \( f, g \) be two partial isomorphisms in \( \C \).
	Then \( f \otimes g \) is a partial isomorphism.
\end{lem}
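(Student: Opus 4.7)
The plan is to exhibit an explicit partial inverse of $f \otimes g$, namely $f^r \otimes g^r$, where $f^r$ and $g^r$ are partial inverses of $f$ and $g$ respectively.

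First, I would use the bifunctoriality of the tensor product $\otimes$ (more precisely, the interchange law of a monoidal category) to compute
\begin{equation*}
    (f \otimes g)(f^r \otimes g^r) = (f f^r) \otimes (g g^r) = \dom{f} \otimes \dom{g}.
\end{equation*}
Then I would invoke Lemma \ref{Lem: DoD tensor} to rewrite $\dom{f} \otimes \dom{g} = \dom{f \otimes g}$, which yields the first defining equation of a partial isomorphism. By exactly the same argument applied to the pair $(f^r, g^r)$, I obtain
\begin{equation*}
    (f^r \otimes g^r)(f \otimes g) = (f^r f) \otimes (g^r g) = \dom{f^r} \otimes \dom{g^r} = \dom{f^r \otimes g^r},
\end{equation*}
which is the second defining equation. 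Hence $f \otimes g$ is a partial isomorphism with partial inverse $f^r \otimes g^r$.

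There is no real obstacle here: the proof is a direct consequence of bifunctoriality of $\otimes$ together with Lemma \ref{Lem: DoD tensor}, and does not require determinism or any further hypothesis, in contrast with Lemma \ref{lem:partial_iso_compose} where the failure of partial isomorphisms to compose in general forced the determinism assumption.
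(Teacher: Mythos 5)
Your proof is correct and is essentially identical to the paper's: both exhibit $f^r \otimes g^r$ as the partial inverse via the interchange law and then apply Lemma \ref{Lem: DoD tensor} to identify $\dom{f} \otimes \dom{g}$ with $\dom{f \otimes g}$. Your closing remark that no determinism hypothesis is needed here, in contrast with Lemma \ref{lem:partial_iso_compose}, is also accurate.
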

\begin{proof}
	Denote by $f^{r}$ and $g^{r}$ partial inverses of \( f \) and \( g \) respectively. 
	Then,
	\[ (f \otimes g)(f^{r} \otimes g^{r}) = ff^{r} \otimes gg^{r} = \dom{f} \otimes \dom{g} = \dom{f \otimes g}, \]
	where the last step uses Lemma \ref{Lem: DoD tensor}. 
	Similarly, $(f^{r} \otimes g^{r})(f \otimes g) = \dom{f^{r} \otimes g^{r}}$.
	Thus $(f^{r} \otimes g^{r})$ is a partial inverse of \( f \otimes g \).
\end{proof}

\subsection*{Resource theories}

We briefly recall the definition of resource theories. See \cite{Resource-Monotone,coecke2016resource} for a detailed account. 
In \cite{Resource-Monotone}, resource theories are called {\it partitioned categories}, a terminology we do not use in this article to avoid any ambiguity with the notion of partitioned matrices. 

\begin{dfn}[\tc{Resource theory}]\cite[Definition 3.1]{Resource-Monotone}
	A {\bf resource theory} is a pair $(\X, \X_{\sf f})$, where $\X$ is a category and $\X _{\sf f}$ is a chosen wide\footnote{This means that $\X _{\sf f}$ contains all of the objects of $\X$ but not necessarily all of the morphisms.} subcategory, called the subcategory of {\bf free transformations}. 
\end{dfn}

\noindent The objects of $\X$ are interpreted as {\bf resources} and the morphisms as be {\bf resource transformations}. 
The subcategory includes all objects and those transformations which are designated to be {\bf free}. 

\begin{dfn}[\tc{Functor of resource theories}] \cite[Definition 3.7]{Resource-Monotone} 
	A {\bf functor of resource theories} 
	\begin{equation*}
		\fun{F} \colon (\X, \X_{\sf f}) \to (\Y, \Y_{\sf f})
	\end{equation*}
	is a functor $\fun{F}\colon \X \to \Y$ that preserves free transformations, that is, \( \fun{F}(\X_{\sf f}) \subseteq \Y_{\sf f} \).
	We say furthermore that \( \fun{F} \) is {\bf resource reflecting} if for all morphisms \( f \) of \( \X \), \( \fun{F}(f) \in \Y_{\sf f} \) implies \( f \in \X_{\sf f} \). 
\end{dfn}

\begin{figure}[ht]
	\centering
	\includegraphics[width=0.4\textwidth]{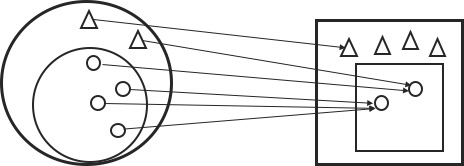} 
	\caption{Schematic for functor of resource theories}
	\label{Fig: pSMC functor}
\end{figure}

\noindent The drawing in Figure \ref{Fig: pSMC functor} is a schematic of a functor of resource theories. 
The triangles represent non-free transformations, and the hollow circles represent free transformations. 
As one can see, such a functor does not necessarily preserve the non-free transformations, unless it is resource reflecting.

\section{Compositional framework for generalized reversible computing}
\label{sec: compositional_framework}

In this section, we develop the compositional framework for generalized reversible computing. 
Our basic building bloc is that of partitioned sets, which leads to the notion of partitioned matrices.

\subsection{Partitioned sets}

\begin{dfn}[\tc{Partitioned set}]
	A {\bf partitioned set} \( (X, \sim) \) is a set \( X \) together with an equivalence relation \( \sim \) on \( X \).
	A morphism \( f \colon (X, \sim) \to (Y, \sim) \) is a function \( f \colon X \to Y \) that respects the equivalence relation.
	We call \( \PSet \) the category of partitioned sets and their morphisms.
\end{dfn}

\noindent  The category \( \PSet \) is symmetric monoidal with respect to the cartesian product, see Corollary \ref{cor:pset_monoidal},.
If \( (X, \sim) \) and \( (Y, \sim) \) are partitioned sets, then \( (X, \sim) \times (Y, \sim) \) is the partitioned set \( (X \times Y, \sim) \) where 
\begin{equation*}
	(x, y) \sim (x', y') \quad\text{ if and only if }\quad x \sim x' \text{ and } y \sim y'.
\end{equation*}
We denote by \( \pt \eqdef \set{*} \) the monoidal unit, that is, the terminal object of \( \PSet \), which is a singleton set with its only possible equivalence relation.

\noindent As it is the case of any category with finite products, \( \PSet \) is a CDU category where, if \( (X, \sim) \) is a partitioned set, the copy-discard structure is given by
\begin{align*}
	\Delta_{(X, \sim)} &\colon x \mapsto (x, x) \\
	\varepsilon_{(X, \sim)}& \colon x \mapsto *. 
\end{align*}

In fact, by universal property of the cartesian product, the copy-discard structure defines natural transformations
\begin{align*}
	\Delta &\colon \PSet \to \PSet \times \PSet,\\
	\varepsilon &\colon \PSet \to \PSet. 
\end{align*}
As a consequence, we get the following results.
\begin{lem} \label{lem:copy_discard_pset}
	In the category of partitioned sets, the following holds:
	\begin{enumerate}
		\item All morphisms are deterministic.
		\item All morphisms are total.
		\item Partial isomorphisms are precisely isomorphisms. 
	\end{enumerate}
\end{lem}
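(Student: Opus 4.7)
The plan is to exploit that \( \PSet \) has finite products, from which the entire copy-discard structure is universally determined. I would handle each item in turn, with (1) and (2) reducing to naturality statements and (3) following from (2) together with Remark \ref{Rmk:total-partial-isos}.

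For (1), I would unfold the determinism condition \textbf{[det]} for a morphism \( f \colon (X, \sim) \to (Y, \sim) \): it asserts exactly that \( f \Delta_{(Y,\sim)} = \Delta_{(X,\sim)} (f \times f) \). Since \( \Delta_{(X,\sim)} \colon x \mapsto (x,x) \) is the diagonal, this is just the pairing-uniqueness clause of the universal property of the product (equivalently, the naturality of \( \Delta \) already noted in the excerpt), and the identity can be checked elementwise in one line. So every morphism in \( \PSet \) is deterministic.

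For (2), the totality condition \textbf{[tot]} says \( f \varepsilon_{(Y,\sim)} = \varepsilon_{(X,\sim)} \). But \( \pt \) is the terminal object of \( \PSet \), so both sides are the unique morphism \( (X,\sim) \to \pt \); they must coincide. Hence every morphism is total.

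For (3), the inclusion isomorphisms \(\subseteq\) partial isomorphisms is immediate: given an iso \( f \) with inverse \( f^{-1} \), we have \( f f^{-1} = \idd{X} \) and by (2) and Lemma \ref{Lem: total restriction is identity} we get \( \dom{f} = \idd{X} \), so \( f f^{-1} = \dom{f} \), and similarly on the other side. For the converse, let \( f \) be a partial isomorphism with partial inverse \( f^r \). By (2), both \( f \) and \( f^r \) are total; then Remark \ref{Rmk:total-partial-isos} applies directly and yields that \( f \) is an isomorphism. The only mild subtlety is to invoke the remark in both directions of the composition, which is transparent here. I do not expect a real obstacle: all three points are formal consequences of \( \PSet \) being cartesian and the abstract facts already established in Section \ref{sec:preliminaries}.
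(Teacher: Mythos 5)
Your proof is correct and follows essentially the same route as the paper, which derives (1) and (2) from the naturality of \( \Delta \) and \( \varepsilon \) guaranteed by the universal property of the cartesian product on \( \PSet \), and obtains (3) from totality via Remark \ref{Rmk:total-partial-isos}. The paper leaves these steps implicit ("as a consequence, we get the following results"), so your write-up is simply a spelled-out version of the intended argument.
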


\subsection{Subdistribution matrices}
\label{subsec: subdistribution matrices}

\noindent The statistical context of a computational system lies at the core of generalized reversible computing, wherein reversibility is defined over the subset of computational states with non-zero probability. However, the category $\PSet$ does not model such a notion {\em partial reversibility} since all morphisms are total, see Remark \ref{Rmk:total-partial-isos}. To describe non-determinism, that is, the statistical context of a computational system, and partial reversibility, we generalize the category of \( \PSet \) further by considering {\bf partitioned matrices} as its morphisms. 
We begin by defining subdistribution matrices.

\begin{dfn} [\tc{Matrix}] \label{dfn:matrix}
    Let \( X \) and \( Y \) be sets.
    A {\bf matrix of shape \( (X, Y) \)} is a function of type \( M \colon X \times Y \to \mathbb{R} \). 
    For any \( x \in X \), the {\bf \( x \)\nbd th row} of \( M \) is the function 
	\[ M_{x, -} \colon Y \to \mathbb{R}, \] 
	and for any \( y \in Y \), the {\bf \( y \)\nbd th column} of \( M \) is the function 
	\[ M_{-, y} \colon X \to \mathbb{R}. \]
\end{dfn}

\begin{dfn} [\tc{Support of a matrix}]
    Let \( M \) be a matrix of shape \( (X, Y) \).
    The {\bf support of \( M \)} is the subset of \( X \) defined by
    \begin{equation*}
        \supp M \eqdef \set{x \in X \mid \exists y \in Y, M_{x, y} \neq 0 }.
    \end{equation*}
\end{dfn}

\begin{rmk}
    For any $x \in X$, \( x \in \supp M \) if and only if \( M_{x, -} \) is not the constant function, \( (y \mapsto 0) \).
\end{rmk}

\begin{dfn} [\tc{Finitely supported function}] 
    Let \( X \) be a set and \( f \colon X \to \mathbb{R} \) be a function.
    The {\bf support of \( f \)} is the set \[ \supp f \eqdef \set{ x \in X \mid f(x) \neq 0}. \]    We say that \( f \) is {\bf finitely supported} if it has a finite support.
\end{dfn}

\begin{comm}
    Notice that there is a small clash of terminology between the support of a function and the support of a matrix.
\end{comm}

\begin{dfn} [\tc{Identity matrix}]
    Let \( X \) be a set. 
    The {\bf identity matrix on \( X \)} is the matrix \( \idmat{X} \) of shape \( (X,X) \) defined by
	\begin{equation*}
		(\idmat{X})_{x, x'} \eqdef
		\begin{cases}
			1 &\text{if } x = x', \\
			0 &\text{if } x \neq x'.
		\end{cases}
	\end{equation*}
\end{dfn}

\begin{dfn} [\tc{Product of matrices}]
    Let \( X, Y, Z \) be sets, \( M \) and \( N \) be matrices of shape \( (X, Y) \) and \( (Y, Z) \) respectively such that for each \( x \in X \), the row of \( M_{x,-} \) is finitely supported.
    The {\bf product of \( M \) and \( N \)} is the matrix \( M N \) of shape \( (X, Z) \) defined by
    \begin{equation*}
        (x, z) \mapsto \sum_{y \in Y} M_{x, y} N_{y, z}.
    \end{equation*}
\end{dfn}

\begin{rmk} \label{rmk:product_matrix_unital_assoc}
    Since for any $x \in X$, the row \( M_{x,-} \) is finitely supported, there are finitely many values for which \( M_{x, y} \) is non-zero, thus \( (MN)_{x, z} \) is well-defined.
    Furthermore, it is clear that the product of matrices is associative and unital with the identity matrix. 
\end{rmk}

\begin{dfn} [\tc{Transpose of a matrix}]
    Let \( M \) be a matrix of shape \( (X, Y) \).
    The {\bf transpose of \( M \)}, written \( \tp M \), is the matrix of shape \( (Y, X) \) whose \( x \)\nbd column is the \( x \)\nbd row of \( M \), that is,
    \[ (\tp M)_{-,x} \eqdef M_{x,-} \]
\end{dfn}

\begin{dfn} [\tc{Subdistribution}]
    Let \( X \) be a set. 
    A {\bf subdistribution on \( X \)} is a finitely supported function \( p \colon X \to [0, 1] \) such that 
	\[ S_p \eqdef \sum_{x \in X} p(x) \le 1. \]
    We say that a subdistribution \( p \) is a {\bf distribution} if \( S_p = 1 \).
\end{dfn}

\begin{dfn} [\tc{Subdistribution matrix}] 
    Let \( X, Y \) be sets.
    A {\bf subdistribution matrix of shape \( (X, Y) \)} is a matrix \( M \colon X \times Y \to [0,1] \) such that for all \( x \in X \), the row \( M_{x,-} \) is a subdistribution over Y.
    A subdistribution matrix is called a {\bf distribution matrix} (or stochastic matrix) if all the rows are in fact distributions.
\end{dfn}

\noindent The following result is standard, the proof is left to the reader.
\begin{lem} 
\label{lem:subdistribution_matrices_stable_product}
	Let \( M \) and \( N \) be subdistribution matrices of shape \( (X, Y) \) and \( (Y, Z) \) respectively.
	Then \( MN \) is a subdistribution matrix, which is furthermore a distribution matrix if \( M \) and \( N \) are.
\end{lem}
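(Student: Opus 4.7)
The plan is to verify the four required conditions on a fixed row $x \in X$ of the product $MN$: the row is finitely supported (so that the product is actually a well-defined matrix in the sense of Definition \ref{dfn:matrix}), each entry lies in $[0,1]$, the row sum is at most one, and the row sum equals one in the stochastic case.

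First I would unfold the definition $(MN)_{x,z} = \sum_{y \in Y} M_{x,y} N_{y,z}$ and use the fact that $M_{x,-}$ is finitely supported to reduce the sum to a finite one indexed by the support $Y_0 \eqdef \supp M_{x,-}$. For finite support of the row, I would argue that $(MN)_{x,z} \neq 0$ implies $N_{y,z} \neq 0$ for some $y \in Y_0$, so
\begin{equation*}
    \supp (MN)_{x,-} \subseteq \bigcup_{y \in Y_0} \supp N_{y,-},
\end{equation*}
which is a finite union of finite sets. Non-negativity and the bound $(MN)_{x,z} \in [0,1]$ will follow from the non-negativity of each factor and the row-sum bound below.

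The main computation is the row-sum bound. Summing over $z \in Z$ and swapping the (outer, a priori infinite but absolutely convergent non-negative) sum with the (finite) sum over $Y_0$ gives
\begin{equation*}
    \sum_{z \in Z} (MN)_{x,z} = \sum_{y \in Y_0} M_{x,y} \sum_{z \in Z} N_{y,z} \le \sum_{y \in Y_0} M_{x,y} \le 1,
\end{equation*}
using that $N_{y,-}$ is a subdistribution for each $y$ and that $M_{x,-}$ is. In the distribution case, both inequalities become equalities.

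The only subtlety, and the step worth being careful about, is the swap of sums: it is justified because the terms are non-negative reals and the inner sum is finite, so no convergence issues arise (equivalently, this is Tonelli's theorem for counting measures, or a direct rearrangement argument on non-negative series). Everything else is bookkeeping.
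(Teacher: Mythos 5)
The paper gives no proof of this lemma (it is explicitly left to the reader as standard), and your argument is exactly the intended one: reduce to the finite support $Y_0$ of $M_{x,-}$, bound the row sum by interchanging the non-negative sums, and observe that both inequalities are equalities in the stochastic case. Your proof is correct and complete, including the finite-support check via $\supp (MN)_{x,-} \subseteq \bigcup_{y \in Y_0} \supp N_{y,-}$, which is the one point readers most often forget.
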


\begin{dfn}[\tc{Category of subdistribution matrices}] \label{dfn:subdistribution_matrices}
\label{def:subStoc}
	We let $\SubMat$ be the category whose objects sets and morphisms from \( X \) to \( Y \) are the subdistribution matrices of shape \( (X, Y) \). 
	Composition is given by matrix multiplication
\end{dfn}

\begin{rmk}
	By Lemma \ref{lem:subdistribution_matrices_stable_product} and Remark \ref{rmk:product_matrix_unital_assoc}, \( \SubMat \) is a well-defined category.
\end{rmk}

\begin{rmk}
	The category of subdistribution matrices is discussed in the context of discrete partial Markov categories in \cite{DiM23}. \end{rmk}

\noindent In the sequel, if \( p \colon Y \to [0, 1] \) is a finitely supported function, we also write $\sum_Y p$ as a shorthand for $\sum_{y \in Y} p(y)$. 

\begin{dfn} [\tc{Unit distribution}]
    Let \( X \) be a set and \( x \in X \). 
    The {\bf unit distribution on \( x \)}, written \( \dirac{x} \colon X \to [0,1] \), is the distribution defined by
    \begin{equation*}
         \dirac {x} (x') \eqdef
        \begin{cases}
            1 & \text{if } x = x', \\
            0 & \text{otherwise.}
        \end{cases}
    \end{equation*}
   We say that distribution $p\colon X \to [0,1]$ is a unit distribution if it is equal to \( \dirac{x} \) for some $x \in X$.
\end{dfn}

\begin{rmk}
	An unit distribution is sometimes called a Dirac distribution.
\end{rmk}

\begin{dfn} [\tc{Kronecker product}]
    Let \( M, N \) be subdistribution matrices of shape \( (X, Y) \) and \( (U, V) \) respectively. 
    The {\bf Kronecker product of \( M \) and \( N \)}, written \( M \kro N \) is the matrix of shape \( (X \times U, Y \times V) \) defined by
    \begin{equation*}
		((x, u), (y, v)) \mapsto M_{x,y} N_{u,v}.
    \end{equation*}
\end{dfn}

\noindent By Definition \ref{dfn:Kleisli_monoidal_distribution} and Proposition \ref{prop:Kleisli_is_matrix}, the category of subdistribution matrices is a symmetric monoidal category with tensor product given by the Kronecker product and monoidal unit given by the unique distribution matrix of shape \( (\pt, \pt) \). 

\begin{dfn} [\tc{Copy and discard}] \label{dnf:copy_discard_sustoch}
    Let \( X \) be a set.
    The {\bf copy subdistribution matrix on \( X \)} is the subdistribution matrix \( \copyy X \) of shape \( (X, X \times X) \) whose \( x \)\nbd th row is given by
    \begin{equation*}
        (x_1, x_2) \mapsto
        \begin{cases*}
            1 & if $ x_1 = x = x_2$ \\
            0 & otherwise
          \end{cases*}. 
    \end{equation*}
    The {\bf discard subdistribution matrix on \( X \)} is the subdistribution matrix \( \disc X \) of shape \( (X, \pt) \) such that for all $x \in X$, $(\disc X)_{x,*} = 1$.
\end{dfn}

\noindent By Corollary \ref{cor:submat_pmat_copy_discard}, the category \( \SubMat \) is a CDU category with the copy-discard structure defined as above. 
The following two results are standard; the reader can reconstruct the proof by unwinding the definitions.

\begin{lem} \label{lem:dom_def_subdistribution_matrix}
    Let \( M \) be a subdistribution matrix of shape \( (X, Y) \).
    Then \( \dom{M}\) is the subdistribution matrix given by
    \begin{equation*}
        \dom{M}_{x, x'} = \dirac{x}(x') \sum_{Y} M_{x, -}.
    \end{equation*}
    In particular, \( M \) is a distribution matrix if and only if it is total. 
\end{lem}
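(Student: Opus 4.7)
The plan is a direct computation, unfolding the definition of the domain of definition in the CDU category \( \SubMat \). Recall that by definition, \( \dom{M} = \copyy X (\idmat X \kro (M \disc Y)) \), reading compositions diagrammatically as the paper does.

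First I would compute the composite factor by factor. The subdistribution matrix \( M \disc Y \colon X \to \pt \) has entry \( (M \disc Y)_{x, *} = \sum_{y \in Y} M_{x,y} (\disc Y)_{y,*} = \sum_Y M_{x,-} \). The Kronecker product \( \idmat X \kro (M \disc Y) \) is then a matrix of shape \( (X \times X, X \times \pt) \) with entry
\begin{equation*}
    (\idmat X \kro (M \disc Y))_{(x_1, x_2), (x_1', *)} = \dirac{x_1}(x_1') \, \sum_Y M_{x_2, -}.
\end{equation*}
Precomposing with \( \copyy X \), whose only nonzero entries are \( (\copyy X)_{x, (x,x)} = 1 \), leaves only the term with \( x_1 = x_2 = x \) in the sum defining matrix multiplication, and after the canonical identification \( X \times \pt \simeq X \), this yields \( \dom{M}_{x, x'} = \dirac{x}(x') \sum_Y M_{x,-} \), as claimed.

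For the second claim, observe that the formula just computed gives \( \dom{M}_{x, x} = \sum_Y M_{x,-} \) on the diagonal and \( 0 \) off it. By Lemma \ref{Lem: total restriction is identity}, if \( M \) is total then \( \dom{M} = \idmat X \), so each diagonal entry must equal \( 1 \), i.e.\ every row of \( M \) sums to \( 1 \), which is precisely the condition for \( M \) to be a distribution matrix. Conversely, if \( M \) is a distribution matrix, the formula gives \( \dom{M} = \idmat X \), and then \( M \disc Y = \disc X \) follows directly since both sides are matrices of shape \( (X, \pt) \) whose unique column is constantly \( 1 \); hence \( M \) is total.

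There is no real obstacle here: the proof is entirely mechanical once the copy and discard morphisms of \( \SubMat \) from Definition \ref{dnf:copy_discard_sustoch} are expanded. The only care needed is in tracking the indices through the Kronecker product and the identification \( X \times \pt \simeq X \); everything else reduces to elementary manipulation of finite sums.
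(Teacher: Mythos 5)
Your computation is correct and is exactly the unwinding of definitions that the paper intends: it gives no proof of this lemma, stating only that the result is standard and can be reconstructed by expanding the copy and discard matrices, which is what you do. One cosmetic remark on the second claim: the detour through \( \dom{M} = \idmat{X} \) is unnecessary in both directions, since totality is by definition the single equation \( M \disc{Y} = \disc{X} \) between matrices of shape \( (X, \pt) \), which entrywise says precisely that every row of \( M \) sums to \( 1 \); your direct comparison of those two matrices already settles the equivalence without invoking Lemma \ref{Lem: total restriction is identity} or the diagonal of \( \dom{M} \).
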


\begin{lem} \label{lem:deterministic_iff_quasidirac}
	Let \( M \) be a subdistribution matrix of shape \( (X, Y) \).
    The following are equivalent.
    \begin{enumerate}
        \item \( M \) is deterministic;
        \item for all \( x \in \supp M \), \( M_{x, -} \colon Y \to [0,1] \) is a unit distribution.
    \end{enumerate}
\end{lem}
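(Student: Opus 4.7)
The plan is to unfold the determinism equation into a pointwise identity on matrix entries, and then to read off the characterisation claimed in the statement.

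First I would fix a subdistribution matrix $M$ of shape $(X,Y)$ and expand the determinism condition $M \copyy{Y} = \copyy{X}(M \kro M)$, where $\copyy{X}$ and $\copyy{Y}$ are the copy subdistribution matrices from Definition \ref{dnf:copy_discard_sustoch}. A direct entry-wise computation, using the definition of matrix product and of the Kronecker product, gives on the one hand
\begin{equation*}
	(M \copyy{Y})_{x, (y_1, y_2)} = \sum_{y \in Y} M_{x,y} (\copyy{Y})_{y,(y_1,y_2)} = \dirac{y_1}(y_2)\, M_{x, y_1},
\end{equation*}
and on the other hand
\begin{equation*}
	(\copyy{X}(M \kro M))_{x, (y_1, y_2)} = \sum_{x_1, x_2} (\copyy{X})_{x,(x_1,x_2)} M_{x_1, y_1} M_{x_2, y_2} = M_{x, y_1} M_{x, y_2}.
\end{equation*}
Thus $M$ is deterministic if and only if $M_{x, y_1} M_{x, y_2} = \dirac{y_1}(y_2) M_{x, y_1}$ for all $x \in X$ and all $y_1, y_2 \in Y$.

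Next I would extract from this single identity the two facts that together characterise unit distributions. Setting $y_1 = y_2 = y$ yields $M_{x, y}^2 = M_{x, y}$, so every entry of $M$ belongs to $\set{0, 1}$. Setting $y_1 \neq y_2$ yields $M_{x, y_1} M_{x, y_2} = 0$, so each row $M_{x,-}$ has at most one non-zero entry. Combined with the subdistribution condition $\sum_{Y} M_{x,-} \le 1$, this forces $M_{x,-}$ to be either identically zero or equal to $\dirac{y}$ for a unique $y \in Y$. The first case is precisely $x \notin \supp M$, and the second gives the desired conclusion for $x \in \supp M$.

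For the converse, assuming each row in $\supp M$ is a unit distribution (and the remaining rows are zero), one checks directly that both sides of the pointwise identity agree: if $M_{x,-} = \dirac{y_0}$ then $M_{x, y_1} M_{x, y_2} = \dirac{y_0}(y_1)\dirac{y_0}(y_2) = \dirac{y_1}(y_2) \dirac{y_0}(y_1)$, and if the row vanishes, both sides are zero. I do not expect any real obstacle here; the only mild subtlety is keeping track of the $\supp M$ case distinction when reading the unit-distribution condition off the quadratic identity, which is handled cleanly by invoking the subdistribution constraint.
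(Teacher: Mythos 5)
Your proof is correct, and it is precisely the ``unwinding of the definitions'' that the paper leaves to the reader (the paper states this lemma without proof): you expand $\Delta_X(M\kro M) = M\Delta_Y$ entrywise to get $M_{x,y_1}M_{x,y_2} = \dirac{y_1}(y_2)M_{x,y_1}$, from which the idempotence and orthogonality of entries give the unit-distribution characterisation, and the converse is a direct check. The only cosmetic remark is that the subdistribution constraint is not actually needed in the forward direction, since entries lying in $\set{0,1}$ with at most one non-zero per row already forces each row to be zero or a unit distribution.
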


\subsection{Partial reversibility of subdistribution matrices}

\begin{dfn}[\tc{Subpermutation matrix}] 
	We say that a subdistribution matrix \( M \) of shape \( (X, Y) \) is a {\bf subpermutation matrix} if 
	\begin{enumerate}
        \item for all \( x \in \supp M \), the row of \( M_{x,-} \) is a unit distribution; 
        \item for all \( x, x' \in \supp M \), \( M_{x, -} = M_{x', -} \) if and only if \( x = x' \).
	\end{enumerate}
\end{dfn}

\begin{rmk} \label{rmk:subpermutation_prop}
	By construction, all subpermutation matrices are subdistribution matrices. 
	Furthermore, by Lemma \ref{lem:deterministic_iff_quasidirac}, the first condition is equivalent to saying that \( M \) is deterministic. 
\end{rmk}

\begin{lem} \label{lem:transpose_subpermutation_is_subpermutation}
	Let \( M \) be a subpermutation matrix. 
    Then \( \tp M \) is a subpermutation matrix.
\end{lem}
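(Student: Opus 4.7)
The plan is to unpack the two subpermutation axioms for \( M \) into a bijective correspondence between \( \supp M \) and \( \supp \tp M \), and then read off the two axioms for \( \tp M \) from this bijection.

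First, I would observe that by definition of the transpose, \( (\tp M)_{y, x} = M_{x, y} \), so the \( y \)\nbd th row of \( \tp M \) is literally the \( y \)\nbd th column of \( M \), viewed as a function \( X \to [0,1] \). In particular, \( y \in \supp \tp M \) if and only if there exists some \( x \in X \) with \( M_{x, y} \neq 0 \), and any such \( x \) must lie in \( \supp M \).

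Next, using condition (1) for \( M \) together with Lemma \ref{lem:deterministic_iff_quasidirac} (or directly the definition of unit distribution), for every \( x \in \supp M \) there is a unique element \( y(x) \in Y \) such that \( M_{x, -} = \dirac{y(x)} \); explicitly \( M_{x, y(x)} = 1 \) and \( M_{x, y} = 0 \) for \( y \neq y(x) \). Condition (2) for \( M \) says exactly that the assignment \( x \mapsto M_{x,-} = \dirac{y(x)} \) is injective on \( \supp M \), and since \( \dirac{y(x)} = \dirac{y(x')} \) iff \( y(x) = y(x') \), the function \( x \mapsto y(x) \) is itself injective on \( \supp M \).

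Now I verify the two subpermutation axioms for \( \tp M \). Fix \( y \in \supp \tp M \). By the previous paragraph, pick any \( x \in \supp M \) with \( M_{x,y} \neq 0 \); then necessarily \( y = y(x) \) and \( M_{x,y} = 1 \). If another \( x' \in \supp M \) satisfied \( M_{x', y} \neq 0 \), then \( y = y(x') \), and injectivity of \( y(-) \) forces \( x = x' \). Hence the \( y \)\nbd th column of \( M \) has exactly one nonzero entry, equal to \( 1 \), so \( (\tp M)_{y, -} = \dirac{x} \) is a unit distribution. This gives condition (1). For condition (2), if \( (\tp M)_{y, -} = (\tp M)_{y', -} = \dirac{x} \) for some \( x \), then \( M_{x, y} = 1 = M_{x, y'} \), so \( y = y(x) = y' \) by uniqueness of \( y(x) \).

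There is no real obstacle: the whole argument is bookkeeping around the fact that the data of a subpermutation matrix is equivalent to a partial injection between the supports. The only point to be slightly careful about is not confusing \( \supp M \) (which is a subset of \( X \), not of \( X \times Y \)) with the set of nonzero entries of \( M \), but this is immediate once one writes out the definitions.
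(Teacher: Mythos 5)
Your proof is correct and follows essentially the same route as the paper: fix \( y \in \supp \tp M \), use condition (1) on \( M \) to see that any \( x \) with \( M_{x,y} \neq 0 \) has \( M_{x,-} = \dirac{y} \), and use condition (2) to conclude that such an \( x \) is unique, so \( (\tp M)_{y,-} = \dirac{x} \). You additionally verify condition (2) for \( \tp M \) explicitly (via the fact that a subdistribution row cannot have two entries equal to \( 1 \)), a step the paper's proof leaves implicit.
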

\begin{proof}
    Let \( (X, Y) \) be the shape of \( M \).
    Fix \( y \in \supp \tp{M} \), and define \( I \) to be the subset of \( X \) such that \( (\tp M)_{y, x} > 0 \).
    Let \( x, x' \in I \), then \( x \) and \( x' \) belong to \( \supp M \), thus \( M_{x, -} \) and \( M_{x', -} \) are unit distribution, necessarily on \( y \), thus \( M_{x, -} = M_{x', -} \).
	By assumption on \( M \), we have \( x = x' \).
    Since \( y \in \supp \tp M \), \( I \) is non-empty, and thus is a singleton \( \set{x} \), hence \( (\tp M)_{y, -} = \dirac{x} \).
	This concludes the proof.
\end{proof}

\begin{prop} \label{prop:partial_inverse_iff_transpose_inverse}
    Let \( M \) be a subdistribution matrix.
    The following are equivalent.
    \begin{enumerate}
        \item \( M \) is a subpermutation matrix; 
        \item \( M \) is a partial isomorphism. 
		In that case, a partial inverse of \( M \) is given by \( \tp M \).
    \end{enumerate}
\end{prop}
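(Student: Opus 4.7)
The plan is to prove the two implications separately, exploiting the symmetry of the subpermutation condition under transposition.

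For (1) implies (2), assume $M$ is a subpermutation matrix of shape $(X, Y)$. By Lemma \ref{lem:transpose_subpermutation_is_subpermutation}, $\tp M$ is also a subpermutation matrix, so it suffices to establish $M \tp M = \dom{M}$; the equation $\tp M M = \dom{\tp M}$ then follows by applying the same argument with $\tp M$ in place of $M$. I would compute entry by entry: $(M \tp M)_{x, x'} = \sum_{y \in Y} M_{x, y} M_{x', y}$. If $x \notin \supp M$, both sides vanish by Lemma \ref{lem:dom_def_subdistribution_matrix}. If $x \in \supp M$, then $M_{x, -} = \dirac{y_0}$ for a unique $y_0$, so the sum collapses to $M_{x', y_0}$; using the injectivity clause of the subpermutation definition (once more splitting on whether $x' \in \supp M$), this is seen to equal $\dirac{x}(x')$, matching the formula for $\dom{M}$ from Lemma \ref{lem:dom_def_subdistribution_matrix}.

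For (2) implies (1), let $M^r$ be a partial inverse of $M$. Fix $x \in \supp M$ and look at the diagonal entry of $M M^r = \dom{M}$:
\[ \sum_{y} M_{x, y} M^r_{y, x} = \sum_{y} M_{x, y}. \]
Since all entries of $M^r$ lie in $[0, 1]$ and all entries of $M$ are non-negative, this equality forces $M^r_{y, x} = 1$ whenever $M_{x, y} > 0$. Running the symmetric argument on $M^r M = \dom{M^r}$ then gives $M_{x, y} = 1$ as well. Hence every non-zero entry of $M$ equals $1$, which, combined with $M_{x,-}$ being a subdistribution, forces $M_{x, -}$ to be a unit distribution; this is condition (1) of the subpermutation definition. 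For condition (2), suppose $x, x' \in \supp M$ satisfy $M_{x, -} = M_{x', -} = \dirac{y_0}$. On the one hand, $(M M^r)_{x, x'} = M^r_{y_0, x'} = \dom{M}_{x, x'} = \dirac{x}(x')$; on the other hand, applying the same computation to $x'$ in place of $x$ yields $M^r_{y_0, x'} = 1$, forcing $x = x'$.

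The main obstacle will be the (2) implies (1) direction: the key structural insight is that the identity $\sum_y M_{x, y} M^r_{y, x} = \sum_y M_{x, y}$ must saturate the bound $M^r_{y, x} \le 1$ along every non-zero entry of the row $M_{x, -}$, and this bootstraps --- through the symmetric equation --- into a rigid $0/1$-valued structure on both $M$ and $M^r$. Once this is in hand, the remaining injectivity clause and the identification $M^r = \tp M$ (which also drops out of this analysis) are pure bookkeeping.
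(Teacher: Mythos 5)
Your proposal is correct, and the overall skeleton coincides with the paper's: the forward direction is the same entrywise computation of \( M\tp{M} \) combined with Lemma \ref{lem:transpose_subpermutation_is_subpermutation} and the self-duality of the claim under transposition, and the first half of the converse is the same saturation argument, reading off from \( \sum_{y} M_{x,y}(1 - M^r_{y,x}) = 0 \) that \( M^r_{y,x} = 1 \) wherever \( M_{x,y} > 0 \) and then bootstrapping through the symmetric equation \( M^r M = \dom{M^r} \) to force \( M_{x,y} = 1 \). Where you genuinely diverge is the injectivity clause: the paper feeds the mixture \( p = \tfrac{1}{2}(\dirac{x} + \dirac{x'}) \) through \( MN \) and compares the two evaluations of \( (pMN)_x \), whereas you read off the off-diagonal entry \( (MM^r)_{x,x'} = M^r_{y_0,x'} = \dom{M}_{x,x'} = \dirac{x}(x') \) and compare it with the diagonal entry at \( x' \), which gives \( M^r_{y_0,x'} = 1 \). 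Your version is more elementary and arguably cleaner --- it uses only individual matrix entries of the defining equations rather than an auxiliary distribution --- at the cost of nothing; the paper's mixture argument foreshadows the probabilistic style used later (e.g.\ in Proposition \ref{prop:partial_inv_iff_deterministic_entropy_preserving}) but is not needed here. One small caveat: your closing remark that the identification \( M^r = \tp{M} \) ``drops out'' of the analysis is not quite literal --- the converse direction only pins down \( M^r \) on certain entries --- but this does not matter, since the clause ``a partial inverse of \( M \) is given by \( \tp{M} \)'' is already furnished by the forward direction once \( M \) is known to be a subpermutation matrix.
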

\begin{proof} 
    Let \( (X, Y) \) be the shape of \( M \), and suppose first that \( M \) is a subpermutation matrix. 
    For all \( x, x' \in X \), we have by Lemma \ref{lem:dom_def_subdistribution_matrix},
    \begin{equation*}
        \dom{M}_{x, x'} = 
        \begin{cases}
            1 & \text{if } x = x' \in \supp M, \\
            0 & \text{otherwise}.
        \end{cases}
    \end{equation*}
    Then for all \( x, x' \in X \), we compute
    \begin{equation*}
        (M\tp M)_{x, x'} = \sum_{y \in Y} M_{x, y} (\tp M)_{y, x'} = \sum_{y \in Y} M_{x, y} M_{x', y}.
    \end{equation*}
    Suppose first that \( x = x' \) and \( x \in \supp M \), then \( M_{x, -} \) is a unit distribution, hence \( \sum_{y \in Y} M_{x, y} M_{x', y} = 1 \).
    Else, if \( x \) or \( x' \) is not in \( \supp M \), then for all \( y \in Y \), we have \( M_{x, y} M_{x', y} = 0 \), thus \( (M\tp M)_{x, x'} = 0 \).
    Finally, if \( x, x' \in \supp M \), and \( x \neq x' \), then \( M_{x, -} = \dirac{z} \) and \( M_{x', -} = \dirac{z'} \) with \( z \neq z' \), thus for all \( y \in Y \), we still have \( M_{x, y} M_{x', y} = 0 \), proving that \( (M\tp M)_{x, x'} = 0 \).
    This shows that for all subpermutation matrices \( N \), we have \( N\tp N = \dom N \).
    Applied to \( \tp M \), which is a subpermutation matrix by Lemma \ref{lem:transpose_subpermutation_is_subpermutation}, we also get \( \tp M M = \tp M \tp {(\tp M)} = \dom {\tp M} \), proving that \( \tp M \) is a partial inverse of \( M \).
    
    Conversely, suppose that \( M \) has a partial inverse \( N \).
    We fix \( x \in \supp M \), and we show that \( M_{x, -} \) is a unit distribution. 
	By Lemma \ref{lem:dom_def_subdistribution_matrix}, 
	\begin{equation*}
		\overline{M}_{x,x} = \delta_{x}(x)  \sum_{y \in Y} M_{x,y} = \sum_{y \in Y} M_{x,y}.
	\end{equation*}
    Thus,
    \begin{equation*}
		\dom{M}_{x, x} - (MN)_{x, x} =  \sum_{y \in Y} M_{x,y} - \sum_{y \in Y} M_{x,y} N_{y,x}  = \sum_{y \in Y} M_{x, y}(1 - N_{y, x}) = 0.
    \end{equation*}
    This implies that for all \( y \in Y \), \( M_{x, y} = 0 \) or \( N_{y, x} = 1 \).
    Since \( x \in \supp M \), there exists \( y_0 \in Y \) such that \( M_{x, y_0} > 0 \), which implies that \( N_{y_0, x} = 1 \).
    Since \( N \) is a subdistribution matrix, necessarily \( N_{y_0, -} = \dirac{x} \).
    Now, \( (NM)_{y_0, y_0} = \dom{N}_{y_0, y_0} \), but
    \begin{align*}
        (NM)_{y_0, y_0} &= \sum_{x' \in X} N_{y_0x'} M_{x', y_0} \\ 
        				&=  \dirac{x}(x') \sum_{x' \in X} M_{x', y_0} \\ 
        				&= M_{x, y_0}.
    \end{align*}
    By Lemma \ref{lem:dom_def_subdistribution_matrix}, \( \dom{N}_{y_0, y_0} = 1 \). 
    Therefore, \( M_{x, y_0} = 1 \), and since \( M \) is a subdistribution matrix, necessarily \( M_{x, -} = \dirac{y_0} \). Thus, for all $x \in \supp M$, $M_{x,-}$ is a unit distribution.
    
    Finally, suppose that \( M_{x, -} = M_{x', -} \) for \( x, x' \in \supp M \).
	Consider the distribution \( p \eqdef \frac{1}{2}(\dirac{x} + \dirac{x'}) \), and call \( S \eqdef \sum_{Y} M_{x, -} \).
    Then on the one hand, we have by Lemma \ref{lem:dom_def_subdistribution_matrix}, 
    \begin{align*}
        (pMN)_x &= \sum_{a \in X} p_{*,a} (MN)_{a, x} = \sum_{a \in X} p_{a} (MN)_{a, x} = 
        \sum_{a \in X} p_{a} \dom{M}_{a, x} \\ 
         &= \left( \frac{1}{2} \dirac{x}(a) \dom{M}_{a,x} + \frac{1}{2} \dirac{x'}(a') \dom{M}_{a,x} \right)   = \left( \frac{1}{2} \dom{M}_{x,x} + \frac{1}{2} \dom{M}_{x',x} \right) \\ 
         &= \left( \frac{1}{2} S + \frac{1}{2} \dirac{x'}(x) S \right)  \\ 
         &= \frac{S}{2}(1 + \dirac{x'}(x)).
    \end{align*}
    On the other hand, for all \( y \in Y \), one computes that 
    \[ (pM)_y = \left( \frac{1}{2}(\dirac{x} + \dirac{x'})M \right)_y = \frac{1}{2} M_{x, y} + \frac{1}{2} M_{x', y} = M_{x,y}, \]
	where in the last step we used that \( M_{x, y} = M_{x', y} \).
    Thus, 
    \begin{align*}
         (pMN)_x &= \sum_{y \in Y} (pM)_y N_{y, x} \\
                 &= \sum_{y \in Y} M_{x, y} N_{y, x} \\
                 &= \dom{M}_{x, x} = S.
    \end{align*}
    Therefore, we must have \( \frac{S}{2}(1 + \dirac{x}(x')) = S \).
    Since \( x \in \supp M \), \( S > 0 \), so we obtain \( \dirac{x'}(x) = 1 \), that is, \( x = x' \).
    This shows that \( M \) is a subpermutation matrix, and concludes the proof.
\end{proof}

\begin{cor} \label{cor:subpermutation_stable_product_Kronecker}
    Let \( M, N \) be subpermutation matrices.
    Then
    \begin{enumerate}
        \item \( MN \), if defined, is a subpermutation matrix;
        \item \( M \kro N \) is a subpermutation matrix.
    \end{enumerate}
\end{cor}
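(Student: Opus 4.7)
The plan is to reduce both claims to the characterisation of subpermutation matrices as partial isomorphisms established in Proposition \ref{prop:partial_inverse_iff_transpose_inverse}, and then invoke the closure properties of partial isomorphisms under composition and tensor that were proved in the abstract CDU setting.

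For part (1), I would first note that by Proposition \ref{prop:partial_inverse_iff_transpose_inverse}, both \( M \) and \( N \) are partial isomorphisms whose partial inverses are given by their transposes \( \tp M \) and \( \tp N \). By Remark \ref{rmk:subpermutation_prop}, subpermutation matrices are deterministic, and by Lemma \ref{lem:transpose_subpermutation_is_subpermutation} the transposes \( \tp M \) and \( \tp N \) are themselves subpermutation matrices, hence again deterministic. Consequently \( M \) and \( N \) satisfy the hypotheses of Lemma \ref{lem:partial_iso_compose}, which yields that \( MN \) is a deterministic partial isomorphism. Applying Proposition \ref{prop:partial_inverse_iff_transpose_inverse} once more in the reverse direction, \( MN \) is a subpermutation matrix.

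For part (2), Proposition \ref{prop:partial_inverse_iff_transpose_inverse} again makes \( M \) and \( N \) partial isomorphisms, and Lemma \ref{lem:partial_iso_tensor} gives that \( M \kro N \) is a partial isomorphism in \( \SubMat \). A final application of Proposition \ref{prop:partial_inverse_iff_transpose_inverse} concludes that \( M \kro N \) is a subpermutation matrix.

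The proof is a formal assembly of earlier results, so I do not anticipate any real obstacle. The only mildly subtle point is in part (1), where Lemma \ref{lem:partial_iso_compose} requires both the morphisms \emph{and} their partial inverses to be deterministic; this is why Lemma \ref{lem:transpose_subpermutation_is_subpermutation} is crucial. Note also that one could alternatively give a direct combinatorial proof unpacking rows of \( MN \) and \( M \kro N \), but this would essentially re-derive the content of the cited lemmas and is strictly less elegant.
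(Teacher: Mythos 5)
Your proof is correct and follows essentially the same route as the paper: it reduces both claims to Proposition \ref{prop:partial_inverse_iff_transpose_inverse} and then applies Lemma \ref{lem:partial_iso_compose} and Lemma \ref{lem:partial_iso_tensor}. The only difference is that you spell out explicitly (via Remark \ref{rmk:subpermutation_prop} and Lemma \ref{lem:transpose_subpermutation_is_subpermutation}) why the determinism hypotheses of Lemma \ref{lem:partial_iso_compose} hold, which the paper leaves implicit.
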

\begin{proof}
    By Proposition \ref{prop:partial_inverse_iff_transpose_inverse}, subpermutation matrices are, in the CDU category $\SubMat$, deterministic partial isomorphisms with deterministic partial inverses. 
    The statements then follow from Lemma \ref{lem:partial_iso_compose} and Lemma \ref{lem:partial_iso_tensor} respectively.
\end{proof}

\subsection{Partitioned matrices}

As a final step in developing a model for generalized reversible computing, we consider subdistribution matrices between partitioned sets. 

\begin{dfn} [\tc{Equivalent subdistribution}] 
	\label{dfn:equivalent_subdistribution}
    Let \( (X, \sim) \) be a partitioned set, \( p, q \) be two subdistributions on \( X \).
    We say that \( p \) and \( q \) are {\bf equivalent subdistributions}, and write \( p \sim q \) if for all \( x \in X \), \[ \sum_{\eqclass{x}} p = \sum_{\eqclass{x}} q. \]
\end{dfn}

\begin{dfn}[\tc{Partitioned matrices}]
\label{Defn:partitioned-matrix}
	Let $(X, \sim_X), (Y, \sim_Y)$ be partitioned sets. 
	We say a matrix $M$ of shape \( (X, Y) \) is a {\bf partitioned matrix (with respect to \( \sim_X \) and \( \sim_Y \))} if and only if for all subdistributions $p, q\colon X \to [0,1]$ such that $p \sim q$, we have
    \begin{equation*}
        pM \sim qM.
    \end{equation*}
\end{dfn}

\begin{rmk}
	We will often leave the equivalence relation on \( X \) and \( Y \) implicit, and just say that \( M \) is partitioned matrix of shape \( (X, Y) \).
\end{rmk}

\begin{dfn}[\tc{Equivalent partitioned matrices}]
	Let $M, N$ be partitioned matrices of shape \( (X, Y) \). 
	We say that {\bf $M$ and $N$ are equivalent}, and write \( M \sim N \), if and only if for all subdistributions \( p \colon X \to [0, 1] \), we have \( pM \sim pN \).
\end{dfn}
 
\noindent We shall now give a more explicit characterisation of partitioned matrices. 

\begin{prop}\label{prop:partitioned-matrices-concretely}
Let \( (X, \sim) \) and \( (Y, \sim) \) be partitioned sets, and $M$ be a subdistribution matrix of shape \( (X, Y) \).
The following are equivalent:
\begin{enumerate}
	\item $M$ is a partitioned matrix;
	\item for all \( x, x' \in X \) such that \( x \sim x' \), we have \( \dirac{x} M \sim \dirac{x'} M \);
	\item for all \( x, x' \) in \( X \) such that \( x \sim x' \), and for all $y$, 
    \[ \sum_{\eqclass{y}} M_{x, -} = \sum_{\eqclass{y}} M_{x', -}. \]
\end{enumerate}
\end{prop}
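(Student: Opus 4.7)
The plan is to establish the cycle of implications $(1) \Rightarrow (2) \Rightarrow (3) \Rightarrow (1)$, with only the last arrow requiring any real argument.

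For $(1) \Rightarrow (2)$: observe that when $x \sim x'$, the Dirac subdistributions $\dirac{x}$ and $\dirac{x'}$ are equivalent, since both assign mass $1$ to the class $\eqclass{x} = \eqclass{x'}$ and $0$ to every other class. Applying the partitioned property of $M$ to this pair immediately gives $\dirac{x} M \sim \dirac{x'} M$.

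For $(2) \Rightarrow (3)$: the $y$-entry of the row vector $\dirac{x} M$ is exactly $M_{x,y}$, so unfolding Definition~\ref{dfn:equivalent_subdistribution} for the pair $\dirac{x} M \sim \dirac{x'} M$ reads precisely as $\sum_{\eqclass{y}} M_{x,-} = \sum_{\eqclass{y}} M_{x',-}$ for every $y \in Y$.

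For $(3) \Rightarrow (1)$: fix subdistributions $p \sim q$ on $X$ and $y \in Y$. Expanding $(pM)_{y'} = \sum_{x} p(x) M_{x,y'}$ and swapping the two finite sums yields
\[
\sum_{\eqclass{y}} (pM) \;=\; \sum_{x \in X} p(x) \biggl( \sum_{\eqclass{y}} M_{x,-} \biggr).
\]
By hypothesis (3), the bracketed quantity depends only on $\eqclass{x}$, so we may write it as $\varphi_y(\eqclass{x})$ and regroup the outer sum by equivalence classes of $X$:
\[
\sum_{\eqclass{y}} (pM) \;=\; \sum_{C \in \quot{X}{\sim}} \varphi_y(C) \sum_{x \in C} p(x).
\]
Repeating the computation with $q$ in place of $p$ gives the same expression with $\sum_{x \in C} q(x)$ in place of $\sum_{x \in C} p(x)$, and the relation $p \sim q$ says exactly that these inner sums coincide class by class. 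Hence $pM \sim qM$ for every $p \sim q$, proving that $M$ is partitioned.

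The only technical point worth checking is the Fubini-style swap of sums in the first display of the last step, which is legitimate because both $p$ and each row $M_{x,-}$ are finitely supported. I do not anticipate any serious obstacle; the content of the proposition is essentially a repackaging of what equivalence of subdistributions means once one notices that a partitioned matrix is linear and its action on $p$ is determined by its action on the Diracs.
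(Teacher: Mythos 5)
Your proposal is correct and follows essentially the same route as the paper: the equivalence of (2) and (3) is the observation that $\dirac{x}M = M_{x,-}$, (1)$\Rightarrow$(2) uses $\dirac{x} \sim \dirac{x'}$, and (3)$\Rightarrow$(1) is the same swap-and-regroup-by-classes computation. The only cosmetic difference is that you arrange the implications as a cycle while the paper treats (2)$\Leftrightarrow$(3) as immediate and proves (1)$\Rightarrow$(2) and (3)$\Rightarrow$(1) separately.
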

\begin{proof}
	Notice that if \( x \in X \), \( \dirac{x}M = M_{x, -} \).
	Thus the last two conditions are equivalent by definition.
	Suppose first that \( M \) is partitioned.
	Then the second condition follows immediately, since if \( x \sim x' \), also \( \dirac{x} \sim \dirac{x'} \). 
	Conversely, suppose that the last condition holds, consider two distributions \( p, q \) on \( X \) such that \( p \sim q \), and let \( y \in Y \).
	Given \( x \in X \), we write \( M_{x, \eqclass{y}} \eqdef \sum_{b \in \eqclass{y}} M_{x, b} \).
	Notice that if \( x \sim x' \), \(  M_{x, \eqclass{y}} =  M_{x', \eqclass{y}} \).
	We compute
	\begin{align*}
		\sum_{b \in \eqclass{y}} (pM)_b &= \sum_{b \in \eqclass{y}} \sum_{x \in X} p_x M_{x, b} \\
		&= \sum_{x \in X} p_x  M_{x, \eqclass{y}} \\
		&= \sum_{a \in X/\sim} \left( \sum_{x \in \eqclass{a}} p_x M_{x, \eqclass{y}} \right) \\
		&= \sum_{a \in X/\sim} \left( \sum_{x \in \eqclass{a}} p_x M_{a, \eqclass{y}} \right) \\ 
		&= \sum_{a \in X/\sim} M_{a, \eqclass{y}} \left( \sum_{x \in \eqclass{a}} p_x \right) \\ 
		&= \sum_{a \in X/\sim} M_{a, \eqclass{y}} \left( \sum_{x \in \eqclass{a}} q_x \right),
	\end{align*}
	where the last step holds since \( p \sim q \).
	Proceeding to a similar computation, we find that the last term is equal to \( \sum_{b \in \eqclass{y}} (qM)_b \) and conclude the proof.
\end{proof}

\begin{dfn} [\tc{Category of partitioned matrices}] \label{dfn:partionned_matrices} 
	We let \( \PSubMat \) be the category where objects are partitioned sets, and morphisms from \( (X, \sim) \) to \( (Y, \sim) \) are partitioned matrices of shape \( (X, Y) \).
	Composition is given by matrix multiplication. 
\end{dfn}

\begin{rmk}
    By Proposition \ref{prop:Kleisli_is_matrix}, the category \( \PSubMat \) is well defined, and in particular, the multiplication of two partitioned matrix is again a partitioned matrix.  
	By Corollary \ref{cor:submat_pmat_copy_discard}, \( \PSubMat \) is also a CDU category with monoidal product given by the Kronecker product, and copy-discard structure given by Definition \ref{dnf:copy_discard_sustoch}. 
\end{rmk}
 
\subsection{Relating partitioned matrices and subdistribution matrices} 
\label{subsec:Aggregation-functor}
 
\noindent We now examine a crucial functor between $\PSubMat$ and $\SubMat$.

\begin{dfn}[\tc{Aggregation functor}] \label{dfn:aggregation}
	Let \( (X, \sim) \), \( (Y, \sim) \) be partitioned sets, and \( M \) be a partitioned matrix of shape \( (X, Y) \).
	The {\bf aggregation matrix} is the matrix \( \Q M \) of shape \( (\quot{X}{\sim}, \quot{Y}{\sim}) \) defined by
	\begin{equation*}
		(\eqclass{x}, \eqclass{y}) \mapsto \sum_{[y]} M_{x,-}.
	\end{equation*}
	By Corollary \ref{cor:aggregation_functor}, this defines a strong monoidal functor
	\begin{equation*}
		\Q \colon (\PSubMat, \kro, \pt) \to (\SubMat, \kro, \pt),
	\end{equation*}
	called the {\bf aggregation functor}.
\end{dfn}

\begin{dfn} [\tc{Locally finite partitioned set}]
	Let \( (X, \sim) \) be a partitioned set.
	We say that \( X \) is locally finite if for all \( x \in X \), \( \eqclass{x} \) is finite. 
\end{dfn}

The following lemma formalizes the idea that every computational process between finite computational states is supported by a physical process.

\begin{lem}\label{lem: Q_is_full}
	The functor $\Q\colon \PSubMat \to \SubMat$ is essentially surjective.
	Furthermore, for all partitioned sets \( (X, \sim) \) and \( (Y, \sim) \) such that \( (Y, \sim) \) is locally finite, the induced function of hom-sets
	\begin{equation*}
		\Q\colon \PSubMat((X, \sim), (Y, \sim)) \to \SubMat(\quot{X}{\sim}, \quot{Y}{\sim})
	\end{equation*}
	is surjective.
\end{lem}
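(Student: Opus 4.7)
The plan is to prove essential surjectivity by exhibiting a canonical lift of any set to a partitioned set, and hom-set surjectivity by constructing an explicit partitioned matrix lifting a given subdistribution matrix, using local finiteness to average over equivalence classes.

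For essential surjectivity, given any set $S$ considered as an object of $\SubMat$, I would equip $S$ with the discrete equivalence relation where $s \sim s'$ iff $s = s'$. Then $\quot{S}{\sim} = S$ as a set, so $\Q(S, \sim) = S$, giving an actual equality (hence an isomorphism in $\SubMat$) on objects.

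For the hom-set statement, fix partitioned sets $(X, \sim)$ and $(Y, \sim)$ with $(Y, \sim)$ locally finite, and a subdistribution matrix $N$ of shape $(\quot{X}{\sim}, \quot{Y}{\sim})$. The goal is to define a partitioned matrix $M$ of shape $(X, Y)$ with $\Q M = N$. The construction is to spread each mass $N_{[x], [y]}$ uniformly over the finite class $[y]$: define
\begin{equation*}
	M_{x, y} \eqdef \frac{N_{[x], [y]}}{|[y]|}.
\end{equation*}
This is well-defined thanks to local finiteness of $(Y, \sim)$. Each row $M_{x, -}$ is finitely supported because $N_{[x], -}$ is, and only finitely many classes $[y]$ contribute. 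Summing a row yields $\sum_{y \in Y} M_{x, y} = \sum_{[y]} |[y]| \cdot \frac{N_{[x],[y]}}{|[y]|} = \sum_{[y]} N_{[x],[y]} \leq 1$, so $M$ is a subdistribution matrix.

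It remains to check that $M$ is partitioned and that $\Q M = N$. For the first, observe that if $x \sim x'$ then $[x] = [x']$, so $M_{x, y} = M_{x', y}$ for every $y$, which in particular implies the criterion of Proposition \ref{prop:partitioned-matrices-concretely}. For the second, a direct computation gives $(\Q M)_{[x], [y]} = \sum_{y' \in [y]} M_{x, y'} = |[y]| \cdot \frac{N_{[x],[y]}}{|[y]|} = N_{[x],[y]}$. The main (minor) obstacle is precisely the use of local finiteness: if some $[y]$ were infinite, the uniform averaging would collapse to zero and any finitely supported lift would have to concentrate mass on finitely many representatives in each class, which is still possible but less canonical; in either case local finiteness makes the uniform choice valid, completing the argument.
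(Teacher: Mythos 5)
Your proposal is correct and follows essentially the same route as the paper: equip a set with the equality relation for essential surjectivity, and lift a matrix $N$ by spreading $N_{[x],[y]}$ uniformly over the finite class $[y]$, then verify the row sums, the identity $\Q M = N$, and partitionedness via Proposition \ref{prop:partitioned-matrices-concretely}. Your justification that the lift is partitioned (the rows $M_{x,-}$ and $M_{x',-}$ are literally equal when $x \sim x'$) is if anything slightly more direct than the paper's.
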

\begin{proof}
	Consider any object $X \in \SubMat$, then $ X = \Q((X,=)) $.
	Thus, \( \Q \) is essentially surjective.
	Let $(X,\sim)$ and $(Y, \sim)$ be partitioned sets with \( (Y, \sim) \) locally finite, and $M$ be a partitioned matrix of shape \( (\Q (X, \sim), \Q (Y, \sim)) = (\quot{X}{\sim}, \quot{Y}{\sim}) \).
	For all \( y \in Y \), define \( c_y \) to be the cardinality of \( \eqclass{y} \), which is finite by assumption, and strictly positive since \( y \in \eqclass{y} \). 
	Define the matrix \( N \) of shape \( (X, Y) \) by letting 
	\begin{equation*}
		 N_{x,y} \eqdef \frac{ M_{\eqclass{x}, \eqclass{y}}}{c_y},
	\end{equation*}
	for all \( x \in X \) and \( y \in Y \).
	We show first that $N$ is a subdistribution matrix.
	Let $x \in X$, then
	\begin{align*}
	\sum_{y \in Y} N_{x,y} & \eqdef \sum_{y \in Y} \frac{ M_{\eqclass{x}, \eqclass{y}}}{c_y} \\  
	&= \sum_{\eqclass{y'} \in \quot{Y}{\sim}}  ~~ \sum_{y \in \eqclass{y'}} \frac{ M_{\eqclass{x}, \eqclass{y}}}{c_y} \\ 
	& = \sum_{\eqclass{y'} \in \quot{Y}{\sim}}  ~~ \sum_{y \in \eqclass{y'}} \frac{ M_{\eqclass{x}, \eqclass{y'}}}{c_{y'}} \\ 
	&= \sum_{\eqclass{y'} \in \quot{Y}{\sim}} M_{\eqclass{x}, \eqclass{y'}} \frac{ \sum_{y \in \eqclass{y'}} 1 }{c_{y'}} \\ 
	& = \sum_{\eqclass{y'} \in \quot{Y}{\sim}} M_{\eqclass{x}, \eqclass{y'}} \leq 1.
	\end{align*}
	This proves that \( N \) is a subdistribution matrix.
	Next, we check that $\Q N = M$.
	Let $x \in X$ and $y \in Y$, then
	\[ (\Q N)_{\eqclass{x}, \eqclass{y}} = \sum_{y' \in \eqclass{y}} N_{x,y'} = \sum_{y' \in \eqclass{y}} \frac{ M_{\eqclass{x}, \eqclass{y'}}}{c_{y'}} = 
	\sum_{y' \in \eqclass{y}} \frac{M_{\eqclass{x}, \eqclass{y}}}{c_y} = M_{\eqclass{x}\eqclass{y}}. \]
	Finally, we show that \( N \) is partitioned.
	By Proposition \ref{prop:partitioned-matrices-concretely}, this is say that that for \( x, x' \in X \) such that $x \sim x' \in X$, and $y \in Y$, we have
	\( (\Q N)_{\eqclass{x}, \eqclass{y}} = (\Q N)_{\eqclass{x'}, \eqclass{y}} \).
	Since \( \Q N = M \), this equality follows from the fact that \( M \) is partitioned.
\end{proof}

\section{Fundamental theorem of generalized reversible computing}
\label{sec:fundamental-theorem}

The purpose of this section is to state the conditions for the equivalence of physical and computational reversibility utilizing the compositional frameworks developed in the previous section: 
\begin{itemize}
	\item The symmetric monoidal category of partitioned matrices \( \PSubMat \), see Definition \ref{Defn:partitioned-matrix});
	\item The symmetric monoidal category of subdistribution matrices,  \( \SubMat \), see Definition \ref{dfn:subdistribution_matrices});
	\item The aggregation functor $\Q \colon (\PSubMat, \kro, \pt) \to (\SubMat, \kro, \pt)$, see (Definition \ref{dfn:aggregation}). 
\end{itemize}  

The main innovation in the set-theoretic framework of generalized reversible computing of \cite{Fra18} is accounting for the statistical context of a computation system, that is, the probablility distribution over the set of states. This allows tracking changes in entropy (at physical and computational levels) as computations proceed, and defining what it means for a computational step to be reversible at different levels.   

\subsection{Conservation of entropy}

\begin{dfn} [\tc{Entropy}] \label{dfn:entropy}
    Let \( p \colon X \to [0, 1] \) be a subdistribution. 
    The {\bf entropy of \( p \)}, written $\H(p)$, is the positive real number defined by 
    \begin{equation}
    	\label{eqn: entropy}
        \H(p) \eqdef - \sum_{x \in X} p_x \log p_x.
    \end{equation}
\end{dfn}

\begin{rmk}
    The function \( f \colon t \mapsto t \log t \) defined on \( (0, 1] \) can be uniquely extended into a continuous function \( \bar f \) defined on \( [0, 1] \) by letting \( \bar f(0) \eqdef 0 \).
    We will always use this unique extension \( \bar f \) throughout the article. 
    Thus, the entropy of the zero subdistribution is well defined and equal to \( 0 \).
\end{rmk}

\begin{rmk} \label{rmk:log_superadditive}
   The function \( t \mapsto t \log t \) is superadditive. This means that for all positive real numbers \( s, t \), we have
   \begin{equation*}
    s \log s + t \log t \le (s + t) \log (s + t),
   \end{equation*}
   the inequality being strict if \( s, t \notin \set{0, 1} \).
\end{rmk}

\begin{lem}
\label{Lem: Shannon entropy is additive}
	Let \( p, q \) be subdistributions. 
	Then, $\H( p \kro q) \leq \H(p) + \H(q)$,
	with equality if and only if $p$ and $q$ are distributions.
\end{lem}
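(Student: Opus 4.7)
The plan is to reduce the claim to a direct computation using the factorization $(p \kro q)_{(x,y)} = p_x q_y$ and then analyze the equality case as a sum of non-negative terms. The key step is to establish the identity
\begin{equation*}
    \H(p \kro q) = S_q \H(p) + S_p \H(q),
\end{equation*}
where $S_p \eqdef \sum_x p_x$ and $S_q \eqdef \sum_y q_y$.

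To obtain this identity, I would expand
\begin{equation*}
    \H(p \kro q) = - \sum_{x,y} p_x q_y \log(p_x q_y)
\end{equation*}
and split the logarithm as $\log(p_x q_y) = \log p_x + \log q_y$ on the support of $p \kro q$, relying on the convention $t \log t = 0$ at $t = 0$ to silently absorb the off-support contributions. The two resulting double sums decouple:
\begin{equation*}
    -\sum_{x,y} p_x q_y \log p_x = \Bigl(\sum_y q_y\Bigr) \Bigl(-\sum_x p_x \log p_x\Bigr) = S_q \H(p),
\end{equation*}
and symmetrically $-\sum_{x,y} p_x q_y \log q_y = S_p \H(q)$, yielding the identity.

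From here the inequality is immediate: since $p, q$ are subdistributions we have $S_p, S_q \in [0,1]$, and since $-t\log t \geq 0$ on $[0,1]$ both entropies are non-negative, giving $S_q \H(p) + S_p \H(q) \leq \H(p) + \H(q)$. For the equality case, I would rewrite the defect as
\begin{equation*}
    \H(p) + \H(q) - \H(p \kro q) = (1 - S_q)\H(p) + (1 - S_p)\H(q),
\end{equation*}
a sum of two non-negative terms. The forward direction is then transparent: if $p$ and $q$ are distributions then $S_p = S_q = 1$ and both terms vanish. The expected main obstacle is the converse: vanishing of the defect only forces $(1-S_p)\H(q) = (1-S_q)\H(p) = 0$, which by itself does not eliminate the degenerate situation where one factor is a Dirac (and thus has zero entropy) while the other is a proper subdistribution. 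The cleanest way I see to close this is to invoke the strict superadditivity of $t \mapsto t \log t$ recorded in Remark~\ref{rmk:log_superadditive} to exclude non-trivial cancellations, or to read the lemma under the implicit non-degeneracy $\H(p), \H(q) > 0$ which then forces $S_p = S_q = 1$.
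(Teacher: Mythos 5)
Your computation is exactly the paper's: both proofs split \( \log(p_x q_y) \) over the support, decouple the double sums, and arrive at the identity \( \H(p \kro q) = S_q \H(p) + S_p \H(q) \), from which the inequality and the ``if'' direction of the equality case are immediate. For that part you match the paper step for step.

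The obstacle you flag in the converse is not a defect of your write-up but a genuine defect of the statement, which the paper's own proof glosses over: it simply asserts that equality holds iff \( S_p = S_q = 1 \), whereas equality in \( S_q \H(p) + S_p \H(q) \le \H(p) + \H(q) \) only forces \( (1 - S_q)\H(p) = (1 - S_p)\H(q) = 0 \), exactly as you say. Concretely, take \( p = \dirac{x} \) a unit distribution and \( q \) a strict subdistribution (say \( q_y = 1/2 \) on a singleton): then \( \H(p \kro q) = \H(q) = \H(p) + \H(q) \), yet \( q \) is not a distribution, so the ``only if'' direction fails. Neither of your proposed repairs closes this: Remark \ref{rmk:log_superadditive} concerns superadditivity of \( t \mapsto t \log t \) along a single sum and has no bearing on the product decomposition here, and reading in the hypothesis \( \H(p), \H(q) > 0 \) changes the statement rather than proving it. The honest fix is to weaken the conclusion to ``with equality if \( p \) and \( q \) are distributions'' (or to characterize equality as \( (1 - S_q)\H(p) = (1 - S_p)\H(q) = 0 \)); note that only the ``if'' direction is used downstream, in Lemma \ref{lem:physical_tf_tensor} and Lemma \ref{lem:entropies_monoidal}, where the relevant subdistributions are all distributions.
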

\begin{proof}
	Let \( X, Y \) be sets such that \( p \colon X \to [0, 1] \) and \( q \colon Y \to [0, 1] \), and consider $(x,y) \in X \times Y$. 
	By definition, $(p \kro q)_{(x,y)} = p_x q_y$.
	Then
	\begin{align*}
		\H(p \kro q) &= - \sum_{(x,y)} p_x q_y \log(p_x q_y) \\
		&= - \sum_{(x,y)} p_x q_y (\log p_x + \log q_y) \\
		&=  - \sum_{(x,y)} p_x  q_y \log p_x - \sum_{(x,y)} p_x  q_y \log q_y \\
		&=  - \sum_{x} p_x \log p_x \sum_y q_y - \sum_{x} p_x \sum_y q_y \log q_y \\
		&=   \H(p) \left(\sum_y q_y\right)  + \H(q)\left(\sum_{x} p_x\right) \\
		& \leq \H(p) + \H(q),
	\end{align*}
	the latter being an equality if and only if \( \sum_{x} p_x = 1 \) and \( \sum_y q_y = 1 \), that is, if and only if \( p \) and \( q \) are distributions.	
\end{proof}

\begin{lem} \label{lem:kernel_entropy}
    Let \( p \colon X \to [0, 1] \) be a subdistribution such that \( \H(p) = 0 \).
    Then either \( p \) is the zero subdistribution or a unit distribution. 
\end{lem}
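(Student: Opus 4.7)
The plan is to reduce the problem to the pointwise question of which values of $p_x$ can yield $-p_x \log p_x = 0$, and then use the subdistribution constraint $\sum_x p_x \le 1$ to classify the possibilities.

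First I would observe that on $[0,1]$, the continuously extended function $t \mapsto -t\log t$ is non-negative and vanishes exactly at $t = 0$ and $t = 1$. This is a standard one-variable calculus fact: since $\log t \le 0$ on $(0,1]$ with equality only at $t=1$, the product $-t\log t$ is non-negative on $(0,1]$, vanishing only at $t=1$, and the boundary convention gives value $0$ at $t = 0$.

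Next, since $p$ is by definition finitely supported, the expression
\begin{equation*}
    \H(p) = \sum_{x \in X} \bigl(-p_x \log p_x\bigr)
\end{equation*}
is a finite sum of non-negative reals. If this sum equals $0$, then each term vanishes, which by the previous step forces $p_x \in \{0, 1\}$ for every $x \in X$.

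Finally, the subdistribution constraint $\sum_{x \in X} p_x \le 1$ allows at most one $x \in X$ with $p_x = 1$. If no such $x$ exists, then $p_x = 0$ for every $x$, so $p$ is the zero subdistribution. Otherwise there is a unique $x_0 \in X$ with $p_{x_0} = 1$ and $p_x = 0$ for $x \ne x_0$, meaning $p = \dirac{x_0}$, a unit distribution. The argument is entirely elementary; the only subtlety worth flagging is the boundary convention $0 \log 0 \eqdef 0$ used in Definition \ref{dfn:entropy}, which ensures the pointwise characterisation of the zeros of $-t\log t$ applies uniformly across $[0,1]$.
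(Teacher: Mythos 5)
Your proof is correct and follows essentially the same route as the paper's: each summand $-p_x\log p_x$ is non-negative, so $\H(p)=0$ forces every $p_x\in\{0,1\}$, and the constraint $\sum_x p_x\le 1$ then leaves only the zero subdistribution or a unit distribution. The extra care you take over the boundary convention $0\log 0 = 0$ and the finite-supportedness of the sum is consistent with the paper's remarks and does not change the argument.
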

\begin{proof}
    We have \( \H(p) = \sum_{x \in X} - p_x \log p_x = 0 \).
    Since \( - t \log t \geq 0 \) for all \( t \in [0, 1] \), this implies that for all \( x \in X \), \( p_x \log p_x = 0 \), i.e. that \( p_x \in \set{0, 1} \).
    Thus, either \( p \) is the zero subdistribution, or there exists \( x \in X \), necessarily unique, such that \( p_x = 1 \), that is, \( p = \dirac{x} \).
\end{proof}

\begin{lem} \label{lem:deterministic_iff_entropy_decreasing}
    Let $M$ be a subdistribution matrix of shape \( (X, Y) \).
    The following are equivalent.
    \begin{enumerate}
        \item \( M \) is deterministic;
        \item for all subdistributions \( p \) on \( X \), \( \H(p M) \le \H(p) \).
    \end{enumerate}
\end{lem}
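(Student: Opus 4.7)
The plan is to use Lemma \ref{lem:deterministic_iff_quasidirac}, which says that \( M \) is deterministic if and only if for all \( x \in \supp M \), the row \( M_{x,-} \) is a unit distribution \( \dirac{f(x)} \) for some element \( f(x) \in Y \). With that characterisation in hand, the forward direction becomes an explicit computation with entropies that leverages the superadditivity stated in Remark \ref{rmk:log_superadditive}, while the reverse direction boils down to probing the inequality on Dirac inputs and invoking Lemma \ref{lem:kernel_entropy}.

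For \((1) \Rightarrow (2)\), I would start with a subdistribution \( p \colon X \to [0,1] \) and a deterministic \( M \), writing \( f \colon \supp M \to Y \) for the partial function induced by the characterisation above. Direct expansion gives
\begin{equation*}
	(pM)_y = \sum_{x \in f^{-1}(y)} p_x,
\end{equation*}
the sum ranging over \( \supp M \). I would then apply superadditivity of \( t \mapsto t \log t \) (iterated over each finite fibre \( f^{-1}(y) \)) to obtain
\begin{equation*}
	-(pM)_y \log (pM)_y \le -\sum_{x \in f^{-1}(y)} p_x \log p_x,
\end{equation*}
and summing over \( y \) yields \( \H(pM) \le -\sum_{x \in \supp M} p_x \log p_x \). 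Since \( -p_x \log p_x \ge 0 \) for every \( x \in X \), enlarging the sum to all of \( X \) only increases it, giving \( \H(pM) \le \H(p) \).

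For \((2) \Rightarrow (1)\), I would fix \( x \in \supp M \) and test the hypothesis against the Dirac subdistribution \( p = \dirac{x} \). Then \( \H(p) = 0 \) and \( pM = M_{x,-} \), so the hypothesis forces \( \H(M_{x,-}) \le 0 \). Entropy being non-negative, \( \H(M_{x,-}) = 0 \), and Lemma \ref{lem:kernel_entropy} identifies \( M_{x,-} \) as either zero or a unit distribution; the zero case is excluded by \( x \in \supp M \). Applying Lemma \ref{lem:deterministic_iff_quasidirac} concludes that \( M \) is deterministic.

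The only subtle step I anticipate is keeping the fibre/support bookkeeping clean in the forward direction, since \( M_{x,-} \) is the zero subdistribution for \( x \notin \supp M \) and contributes neither to \( pM \) nor to the fibre sums; this just needs care in the index sets rather than any new idea. Everything else is routine once the two preceding lemmas are in place.
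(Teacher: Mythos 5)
Your proposal is correct and follows essentially the same route as the paper's proof: both directions use Lemma \ref{lem:deterministic_iff_quasidirac} to translate determinism into rows being unit distributions, the forward direction via superadditivity of \( t \mapsto t \log t \) applied fibrewise, and the reverse direction by testing on Dirac subdistributions and invoking Lemma \ref{lem:kernel_entropy}. The support bookkeeping you flag is handled identically in the paper and poses no issue.
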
 
\begin{proof}
    Suppose that \( M \) is deterministic, then by Lemma \ref{lem:deterministic_iff_quasidirac} there is a function \( \fun{m} \colon \supp M \to Y \) such that for all \( x \in \supp M \), \( M_{x, -} = \dirac{\fun{m}(x)} \). 
    Therefore, for all \( y \in Y \), we have 
    \begin{equation*}
        (pM)_y = \sum_{x \in X} p_x M_{x, y} = \sum_{x \in X: \fun{m}(x) = y} p_x.
    \end{equation*}
    We compute
    \begin{align*}
    \H(p M) &= - \sum_{y \in Y} (pM)_y \log (pM)_y \\
	&= - \sum_{y \in Y} \left(\sum_{x \in X: \fun{m}(x) = y} p_x\right) \log\left( \sum_{x \in X: \fun{m}(x) = y} p_x \right) \\
                &\le - \sum_{y \in Y} \sum_{x \in X: \fun{m}(x) = y} p_x \log p_x & (t \mapsto t \log t) \text{ superadditive } \\
                &= - \sum_{x \in X} p_x \log p_x = \H(p).
    \end{align*} 
    Conversely, let \( x \in X \), then \( \H(\dirac{x} M) \le \H(\dirac{x}) = 0 \), so by Lemma \ref{lem:kernel_entropy}, \( \dirac{x} M = M_{x, -} \) is either the zero distribution or a unit distribution.
    By Lemma \ref{lem:deterministic_iff_quasidirac}, \( M \) is deterministic.
    This concludes the proof.
\end{proof}

\begin{prop}\label{prop:partial_inv_iff_deterministic_entropy_preserving}
    Let \( M \) be a subdistribution matrix of shape \( (X, Y) \).
    The following are equivalent.
    \begin{enumerate}
        \item \( M \) is a subpermutation matrix;
        \item for all subdistributions \( p \) on \( X \) such that \( \supp p \subseteq \supp M \), $\H(pM) = \H(p)$.
    \end{enumerate}
\end{prop}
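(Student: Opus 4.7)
The plan is to prove the two implications separately: direction $(1) \Rightarrow (2)$ is a direct computation, while $(2) \Rightarrow (1)$ proceeds by first extracting determinism and then injectivity of the row map.

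For $(1) \Rightarrow (2)$, I would invoke Remark~\ref{rmk:subpermutation_prop} and Lemma~\ref{lem:deterministic_iff_quasidirac} to obtain a function $\fun{m} \colon \supp M \to Y$ with $M_{x,-} = \dirac{\fun{m}(x)}$ for every $x \in \supp M$; the second clause of the definition of a subpermutation matrix additionally makes $\fun{m}$ injective. For a subdistribution $p$ with $\supp p \subseteq \supp M$, the column formula $(pM)_y = \sum_{x : \fun{m}(x) = y} p_x$ used in the proof of Lemma~\ref{lem:deterministic_iff_entropy_decreasing} still applies, but now by injectivity of $\fun{m}$ each such sum contains at most one nonzero term. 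The superadditivity step in that proof thereby becomes an equality, and one reads off $\H(pM) = \H(p)$.

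For $(2) \Rightarrow (1)$, I first establish determinism via Lemma~\ref{lem:deterministic_iff_entropy_decreasing}. The hypothesis only gives entropy preservation for $p$ supported in $\supp M$, whereas that lemma demands $\H(pM) \leq \H(p)$ for every subdistribution. I bridge this gap by splitting an arbitrary $p$ as $p = p_M + p_{M^c}$, where $p_M$ and $p_{M^c}$ are the restrictions of $p$ to $\supp M$ and $X \setminus \supp M$ respectively. Since $M_{x,-} = 0$ for $x \notin \supp M$, we have $p_{M^c} M = 0$, and therefore $pM = p_M M$. The supports of $p_M$ and $p_{M^c}$ are disjoint, so $\H(p) = \H(p_M) + \H(p_{M^c}) \geq \H(p_M)$, while the hypothesis applied to $p_M$ (whose support lies in $\supp M$) yields $\H(p_M M) = \H(p_M)$. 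Combining gives $\H(pM) \leq \H(p)$, and Lemma~\ref{lem:deterministic_iff_entropy_decreasing} delivers determinism of $M$.

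It remains to verify the injectivity clause of the subpermutation condition on $\supp M$. Suppose for contradiction that $x \neq x'$ in $\supp M$ satisfy $M_{x,-} = M_{x',-}$; by the determinism just proved, this common row is a unit distribution $\dirac{y}$ for some $y \in Y$. Take $p \eqdef \tfrac{1}{2}(\dirac{x} + \dirac{x'})$, a distribution with $\supp p \subseteq \supp M$ and $\H(p) = \log 2 > 0$. A direct computation gives $pM = \dirac{y}$, so $\H(pM) = 0$, contradicting the hypothesis. The principal obstacle in the whole argument is precisely the quantifier gap in $(2) \Rightarrow (1)$ between the hypothesis (equality for $p$ supported in $\supp M$) and the global inequality demanded by Lemma~\ref{lem:deterministic_iff_entropy_decreasing}; the additive decomposition $p = p_M + p_{M^c}$ is the mechanism that closes this gap.
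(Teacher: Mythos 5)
Your proof is correct, but it takes a genuinely different route from the paper's in both directions. For $(1)\Rightarrow(2)$ the paper does not recompute the entropy sum: it invokes Proposition~\ref{prop:partial_inverse_iff_transpose_inverse} to get the partial inverse $\tp{M}$, applies Lemma~\ref{lem:deterministic_iff_entropy_decreasing} to \emph{both} $M$ and $\tp{M}$ to obtain $\H(p\dom{M}) \le \H(pM) \le \H(p)$, and then checks via Lemma~\ref{lem:dom_def_subdistribution_matrix} that $p\dom{M}=p$ when $\supp p \subseteq \supp M$. Your argument instead reopens the superadditivity estimate and observes that injectivity of the row map $\fun{m}$ forces each fibre sum $\sum_{x:\fun{m}(x)=y}p_x$ to have at most one nonzero term, turning the inequality into an equality; this is more self-contained (it needs neither the transpose nor the partial-inverse machinery) at the cost of redoing a computation the paper reuses. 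For $(2)\Rightarrow(1)$ the paper avoids the quantifier gap you identify altogether: it simply applies the hypothesis to the unit distribution $\dirac{x}$ for $x \in \supp M$, getting $\H(M_{x,-})=0$, and concludes via Lemma~\ref{lem:kernel_entropy} that each such row is a unit distribution --- no appeal to Lemma~\ref{lem:deterministic_iff_entropy_decreasing} and hence no need for the global inequality. Your decomposition $p = p_M + p_{M^c}$, with $pM = p_M M$ and $\H(p) = \H(p_M) + \H(p_{M^c}) \ge \H(p_M)$, does legitimately close that gap, so your detour is sound; it is just longer than necessary. The final injectivity step (testing on $\tfrac{1}{2}(\dirac{x}+\dirac{x'})$ and comparing $\log 2$ with $0$) coincides with the paper's argument.
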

\begin{proof}
    Suppose that \( M \) is a subpermutation matrix, and let \( p \) be a distribution on \( X \) with the same support as \( M \). 
    Since \( M \) is a subpermutation matrix, by Proposition \ref{prop:partial_inverse_iff_transpose_inverse}, it has a partial inverse given \( \tp{M} \), which is also a subpermutation matrix.
    In particular, Lemma \ref{lem:deterministic_iff_quasidirac} shows that both \( M \) and \( \tp M \) are deterministic.
    Therefore, applying Lemma \ref{lem:deterministic_iff_entropy_decreasing} to \( \tp M \), we have
    \begin{equation*}
        \H(p \dom{M}) = \H(p M\tp{M}) \le \H(pM).
    \end{equation*}
    Now by Lemma \ref{lem:dom_def_subdistribution_matrix}, 
    \begin{equation*}
        (p \dom{M})_x = p_x \sum_{Y} M_{x, -} = 
        \begin{cases}
            p_x & \text{if } x \in \supp M \\
            0 & \text{else.}
        \end{cases}
    \end{equation*}
    Since \( \supp p \subseteq \supp M \), we deduce that \( p \dom{M} = p \), hence \( \H(p) \le \H(pM) \).
    Since Lemma \ref{lem:deterministic_iff_entropy_decreasing} also implies \( \H(pM) \le \H(p) \), we deduce the equality.

    Conversely, let \( x \in \supp M \), then \( \supp \dirac{x} = \set{x} \subseteq \supp M \), and by Lemma \ref{lem:kernel_entropy}, \( \H(\dirac{x}) = 0 \). 
	By assumption, we have that \( \H(\dirac{x}) = \H(\dirac{x} M) = \H(M_{x, -}) \). Therefore, \( \H(M_{x, -}) = 0 \). 
    Since \( x \in \supp M \), \( M_{x, -} \) is not the zero distribution, hence by Lemma \ref{lem:kernel_entropy}, \( M_{x, -} \) is a unit distribution.
    Finally, let \( x, x' \in \supp M \) and suppose that \( M_{x, -} \) and \( M_{x', -} \) are both equal to \( \dirac{y} \) for some \( y \in Y \).
    Let \( p \eqdef \frac{1}{2} \dirac{x} + \frac{1}{2} \dirac{x'} \), we clearly have that \( \supp p \subseteq \supp M \).
    Now \( pM = \dirac{y} \), so \( \H(pM) = 0 \). 
    By assumption, this implies that \( \H(p) = 0 \), so by Lemma \ref{lem:kernel_entropy}, \( p \) is either the zero subdistribution or a unit distribution.
    Since \( p \) is visibly not the zero subdistribution, it has to be a unit distribution, which is possible only if \( x = x' \).
    This shows that \( M \) is a subpermutation matrix and concludes the proof.
\end{proof}

\begin{cor} \label{cor:partial_inverse_iff_subpermutation_iff_entropy_preserving}
    Let \( M \) be a subdistribution matrix of shape \( (X, Y) \).
    The following are equivalent.
    \begin{enumerate}
        \item \( M \) has a partial inverse;
        \item \( M \) is a subpermutation matrix;
        \item for all subdistributions \( p \) on \( X \) such that \( \supp p \subseteq \supp M \), \( \H(pM) = \H(p) \). 
    \end{enumerate}
    In that case, a partial inverse of \( M \) is given by \( \tp{M} \).
\end{cor}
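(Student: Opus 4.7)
The plan is straightforward: the corollary is an immediate assembly of two results already proved in this section, and there is essentially nothing new to do.

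First, I would unpack what ``$M$ has a partial inverse'' means. By Definition of partial isomorphism, this is precisely the statement that $M$ is a partial isomorphism in the CDU category $\SubMat$. The equivalence (1) $\Leftrightarrow$ (2) is then exactly Proposition \ref{prop:partial_inverse_iff_transpose_inverse}, which furthermore gives the explicit form $\tp M$ of a partial inverse whenever $M$ is a subpermutation matrix.

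Next, the equivalence (2) $\Leftrightarrow$ (3) is Proposition \ref{prop:partial_inv_iff_deterministic_entropy_preserving} verbatim. Chaining the two equivalences through (2) yields the three-way equivalence in the statement of the corollary, together with the concrete description of the partial inverse as $\tp M$ in case any of the three equivalent conditions holds.

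There is no genuine obstacle: the entire proof reduces to citing the two propositions and observing that ``having a partial inverse'' and ``being a partial isomorphism'' are the same definition. The substantive content has already been discharged in the preceding proofs --- in particular, the transpose construction and the entropy superadditivity argument.
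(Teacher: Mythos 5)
Your proposal is correct and is exactly the paper's own proof: the paper's argument is the one-line citation of Proposition \ref{prop:partial_inverse_iff_transpose_inverse} for the equivalence of (1) and (2) (together with the explicit inverse \( \tp M \)) and Proposition \ref{prop:partial_inv_iff_deterministic_entropy_preserving} for the equivalence of (2) and (3). Nothing further is needed.
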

\begin{proof}
    Follows from Proposition \ref{prop:partial_inverse_iff_transpose_inverse} and Proposition \ref{prop:partial_inv_iff_deterministic_entropy_preserving}. 
\end{proof}

\subsection{Categories of physical and computational contexts}
\label{Subsec:Categories-of-physical-and-computational-contexts}

In this section, we construct categories representing statistical contexts (at physical and computational levels) of computational systems. We represent physical contexts via partitioned matrices and computational contexts via subdistribution matrices.  
First, we consider the subcategories of the coslice of \( \PSubMat \) and \( \SubMat \) under the tensor unit \( \pt \).

\begin{dfn}[\tc{Category of computational contexts, $\Comp$}] 
	A {\bf computational context} is a distribution \( p \colon X \to [0, 1] \) on a set \( X \).
	Given two computational contexts \( p \colon X \to [0, 1] \) and \( q \colon Y \to [0, 1] \) on sets \( X \) and \( Y \) respectively, a {\bf computational transformation} is a subdistribution matrix \( M \) of shape \( (X, Y) \) such that \( pM = q \).
	We write \( M \colon p \tf q \).
	The {\bf category of computational contexts} is the category \( \Comp \) whose objects are computational contexts, and morphisms are computational transformations.
	Composition is given by matrix multiplication.
\end{dfn}

\begin{rmk}
	Since a distribution \( p \colon X \to [0, 1] \) on a set \( X \) is equivalent to a distribution matrix of shape \( (\pt, X) \), one checks that the category of computational contexts is indeed a subcategory of the coslice category \( \slice{\pt}{\SubMat} \).
\end{rmk}

\begin{dfn}[\tc{Category of physical contexts, $\Phy$}]
	A {\bf physical context} is a distribution \( p \colon X \to [0, 1] \) on partitioned set \( (X, \sim) \).
	Given two physical contexts \( p \colon X \to [0, 1] \) and \( q \colon Y \to [0, 1] \) on the partitioned sets \( (X, \sim) \) and \( (Y, \sim) \) respectively, a {\bf (closed) physical transformation} is a partitioned matrix \( M \) of shape \( (X, Y) \) such that \( pM = q \) and \( \H(p) = \H(q) \).
	We write \( M \colon p \tf q \).
	The {\bf category of physical contexts} is the category \( \Phy \) whose objects are physical contexts, and morphisms are physical transformations.
	Composition is given by matrix multiplication.
\end{dfn}

\begin{comm}
	The condition \( \H(p) = \H(q) \) reflects the fact that \( \Phy \) models closed physical systems.
	By \cite[Theorem 1]{Fra18}, the entropy of such a system is preserved under physically viable transformations.
\end{comm}

\begin{rmk}
	Since a distribution \( p \colon X \to [0, 1] \) on a set \( X \) is equivalently a distribution matrix of shape \( (\pt, X) \), the category of physical contexts is indeed a subcategory of the the coslice category \( \slice{\pt}{\PSubMat} \).
\end{rmk}

\begin{rmk}
	By Lemma \ref{lem:physical_tf_tensor}, both the categories \( \slice{\pt}{\SubMat} \) and \( \slice{\pt}{\PSubMat} \) inherit the monoidal structure given by the Kronecker product on \( \SubMat \) and \( \PSubMat \) respectively. 
\end{rmk}

\begin{lem}
\label{lem:physical_tf_tensor}
	Let \( M, N \) be physical transformations.
	Then $M \otimes N$ is a physical transformation.
\end{lem}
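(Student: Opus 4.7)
The plan is to verify each of the defining conditions of a physical transformation for $M \otimes N$ in turn. Suppose $M \colon p \tf q$ and $N \colon p' \tf q'$, so that $M$ and $N$ are partitioned matrices with $pM = q$, $p'N = q'$, $\H(p) = \H(q)$, and $\H(p') = \H(q')$.

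First I would observe that $M \otimes N$ is a partitioned matrix, since $\PSubMat$ is a symmetric monoidal category with the Kronecker product (as noted in the remark following Definition~\ref{dfn:partionned_matrices}). Thus $M \otimes N$ is a legitimate morphism in $\slice{\pt}{\PSubMat}$.

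Next, I would check that $(p \otimes p')(M \otimes N) = q \otimes q'$. Since tensoring is functorial — which in matrix terms is the fact that $(A \otimes B)(C \otimes D) = (AC) \otimes (BD)$ whenever the products are defined — we get
\begin{equation*}
	(p \otimes p')(M \otimes N) = (pM) \otimes (p'N) = q \otimes q'.
\end{equation*}

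The entropy condition is the only place where I need to do a little work. Since $p$, $p'$, $q$, $q'$ are honest distributions (not merely subdistributions), Lemma~\ref{Lem: Shannon entropy is additive} gives equality in the superadditivity bound, so
\begin{equation*}
	\H(p \otimes p') = \H(p) + \H(p') = \H(q) + \H(q') = \H(q \otimes q'),
\end{equation*}
using the assumed entropy equalities for $M$ and $N$ in the middle step. I don't anticipate any real obstacle here: the main subtlety is simply to remember that equality in Lemma~\ref{Lem: Shannon entropy is additive} requires the arguments to be total distributions, which is guaranteed by the fact that objects of $\Phy$ are distributions rather than subdistributions.
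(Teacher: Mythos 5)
Your proposal is correct and follows essentially the same route as the paper's proof: functoriality of the Kronecker product gives $(p \kro p')(M \kro N) = q \kro q'$, and two applications of Lemma \ref{Lem: Shannon entropy is additive} (with equality because the contexts are genuine distributions) give $\H(p \kro p') = \H(p) + \H(p') = \H(q) + \H(q') = \H(q \kro q')$. Your explicit remarks that $M \kro N$ is again a partitioned matrix and that the equality case of the entropy lemma requires total distributions are points the paper leaves implicit, but the argument is the same.
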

\begin{proof}
	Next, suppose that \( M \) and \( N \) are of type \( p \tf q \) and \( q' \tf q' \) respectively, where by definition, \( p, q, p' \) and \( q' \) are distributions.
	Then by Lemma \ref{Lem: Shannon entropy is additive} twice,
	\begin{align*}
		\H((p \kro p')(M \kro N)) &= \H((pM) \kro (p'N)) = \H(pM) + \H(p'N) = \H(q) + \H(q') \\
		&= \H(q \kro q').
	\end{align*} 
	This concludes the proof.
\end{proof}

\noindent By functoriality of the coslice construction, the aggregation functor $\Q$ from Definition \ref{dfn:aggregation} defines again a functor \( \Q \colon \slice{\pt}\PSubMat \to \slice{\pt}\SubMat \). 
It is straightforward to check that this restricts to a functor
\begin{equation*}
	\Q \colon \Phy \to \Comp.
\end{equation*}
In this perspective, \( \Q \) associates to given a physical context $p \colon X \to [0, 1]$ of the state space $(X,\sim)$, its associated computational context $\Q(p)$ of the computational state \( \quot{X}{\sim} \).

Since physical systems are finite, we may apply Lemma \ref{lem: Q_is_full}.
The fact that $\Q \colon \pt/\PSubMat \to \pt/\SubMat$ is essentially surjective indicates that every computational context has a corresponding physical context. 
The fact that $\Q$ is full, indicate that every computational transformation can be modelled by a physical operation.

\subsection{Physical reversibility via non-computational entropy}
\label{Subsec:physical-reversibility}

In this section, we use non-computational entropy \cite[Definition 7]{Fra18} as a parameter to describe reversibility of a computation at the physical level. Non-computational entropy represents the entropy at the physical level when the information at the computational level is known. 

\begin{dfn}[\tc{Physical and computational entropies}]
\label{defn:entropies}
	Let \( p \) be a physical context.
	\begin{itemize}
		\item The {\bf physical entropy} of \( p \) is the quantity 
		\begin{equation*}
			\Hphy(p) \eqdef \H(p).
		\end{equation*}
		\item The {\bf computational entropy} of \( p \) is the quantity
		\begin{equation*}
			\Hcomp(p) \eqdef \H(\Q (p)).
		\end{equation*}
		\item The {\bf non-computational entropy} of \( p \) is the quantity
		\begin{equation*}
			 \Hnc(p) \eqdef \Hphy(p) - \Hcomp(p).	
		\end{equation*}
	\end{itemize}
\end{dfn}

\noindent The non-computational entropy is indeed equal to the physical entropy conditioned on the computational entropy, that is, 
\begin{equation*}
	\Hnc(p) = \H(\Hphy(p) \mid \Hcomp(p)),
\end{equation*}
see \cite[Theorem 2]{Fra18} for more detail.

\begin{dfn}[\tc{Non-entropy-ejecting processes}]
	Let \( M \colon p \tf q \) be a physical transformation.
	We say that \( M \) is {\bf entropy-ejecting} if $M$ causes an non-zero increase in the non-computational entropy, that is, if 
	\begin{equation*}
		\Hnc(q) > \Hnc(p).
	\end{equation*}
	Thus, \( M \) is non-entropy-ejecting if \( \Hnc(q) \le \Hnc(p) \)
	\end{dfn}

\noindent Let \( \bbR \) be the chaotic\footnote{This means that there is exactly one morphism per any pair of objects} category on the real line, endowed with its symmetric monoidal structure given by the addition.

\begin{lem} \label{lem:entropies_monoidal}
	The physical, computational, and non-computational entropies define strict monoidal functors 
	\begin{equation*}
		\Hphy, \Hcomp, \Hnc \colon (\Phy, \kro, \pt) \to (\bbR, +, 0).
	\end{equation*}
\end{lem}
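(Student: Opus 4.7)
The plan is to first strip away all the functoriality content by observing that $\bbR$ is chaotic: every hom-set in $\bbR$ is a singleton, so once the action on objects is specified, there is a unique choice for each morphism and all functor and monoidal coherence axioms (associator, unitor, braiding compatibilities) are automatically satisfied. Hence for each of $\Hphy$, $\Hcomp$, $\Hnc$ it suffices to verify the two strict monoidal identities $F(\pt) = 0$ and $F(p \kro q) = F(p) + F(q)$ on physical contexts $p, q$.

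For $\Hphy = \H$, the identity on $\pt$ is immediate: the unique distribution on the terminal partitioned set is $\dirac{*}$, and $-1 \log 1 = 0$. Additivity under Kronecker products is the equality clause of Lemma \ref{Lem: Shannon entropy is additive}, which applies precisely because the objects of $\Phy$ are distributions (not merely subdistributions).

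For $\Hcomp = \H \circ \Q$, the plan is to invoke strong monoidality of the aggregation functor (the Corollary stated after Definition \ref{dfn:aggregation} and established via Corollary \ref{cor:aggregation_functor}): the canonical bijection $\quot{X \times Y}{\sim} \cong \quot{X}{\sim} \times \quot{Y}{\sim}$ identifies the distribution $\Q(p \kro q)$ with $\Q(p) \kro \Q(q)$. Applying the additivity of $\H$ on distributions then gives $\Hcomp(p \kro q) = \Hcomp(p) + \Hcomp(q)$, while $\Q(\pt) = \pt$ gives $\Hcomp(\pt) = 0$. For $\Hnc = \Hphy - \Hcomp$, the conclusion is formal: the pointwise difference of two strict monoidal functors into the abelian monoid $(\bbR, +, 0)$ is again strict monoidal.

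The only mild obstacle is in the $\Hcomp$ step, where $\Q$ is strong rather than strict monoidal: one must note explicitly that Shannon entropy depends only on the multiset of nonzero values of a distribution and is therefore invariant under relabelling of the underlying set by a bijection. This lets the coherence isomorphism of $\Q$ be pushed through $\H$ without loss, which is the only substantive ingredient beyond routine unpacking.
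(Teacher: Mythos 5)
Your proposal is correct and follows essentially the same route as the paper's proof: functoriality is trivial because $\bbR$ is chaotic, additivity of $\Hphy$ comes from the equality case of Lemma \ref{Lem: Shannon entropy is additive} (applicable since physical contexts are genuine distributions), monoidality of $\Hcomp$ comes from Corollary \ref{cor:aggregation_functor}, and $\Hnc$ follows as a difference. Your explicit remark that the strong (rather than strict) monoidality of $\Q$ is harmless because Shannon entropy is invariant under bijective relabelling is a point the paper leaves implicit, but it does not change the argument.
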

\begin{proof}
	These construction are evidently functorial by sending \( M \colon p \tf q \) to the unique morphism from \( \H(p) \) to \( \H(q) \).
	By Lemma \ref{Lem: Shannon entropy is additive} and Corollary \ref{cor:aggregation_functor}, \( \Hphy \) and \( \Hcomp \) are monoidal.
	It follows that \( \Hnc \) is monoidal.
	This concludes the proof.
\end{proof}

Next, we check that non-entropy ejecting processes compose in sequence and in parallel.

\begin{lem}
\label{lem:n.e.e_compose_tensor}
	Let \( M \) and \( N \) be non-entropy-ejecting transformations.
	Then
	\begin{enumerate}
		\item \( MN \), if defined, is non-entropy-ejecting;
		\item \( M \kro N \) is non-entropy-ejecting.
	\end{enumerate}
\end{lem}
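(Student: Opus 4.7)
The plan is to observe that both parts reduce to formal consequences of Lemma \ref{lem:entropies_monoidal}, which states that $\Hnc \colon (\Phy, \kro, \pt) \to (\bbR, +, 0)$ is a strict monoidal functor. Non-entropy-ejection is, by definition, just an inequality between the non-computational entropies at the source and target of a morphism, so both claims amount to checking that this inequality is preserved by composition and by the tensor product.

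For sequential composition, suppose $M \colon p \tf q$ and $N \colon q \tf r$ are non-entropy-ejecting. By definition $\Hnc(q) \le \Hnc(p)$ and $\Hnc(r) \le \Hnc(q)$. Transitivity of $\le$ gives $\Hnc(r) \le \Hnc(p)$, so $MN \colon p \tf r$ is non-entropy-ejecting.

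For parallel composition, suppose $M \colon p \tf q$ and $N \colon p' \tf q'$ are non-entropy-ejecting. Then $M \kro N \colon p \kro p' \tf q \kro q'$ by Lemma \ref{lem:physical_tf_tensor}. Applying the monoidality of $\Hnc$ from Lemma \ref{lem:entropies_monoidal}, we have
\begin{equation*}
    \Hnc(q \kro q') = \Hnc(q) + \Hnc(q') \le \Hnc(p) + \Hnc(p') = \Hnc(p \kro p'),
\end{equation*}
where the inequality uses the hypotheses on $M$ and $N$ combined with monotonicity of addition. Hence $M \kro N$ is non-entropy-ejecting.

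There is no real obstacle here: the whole content of the lemma has already been packaged into the monoidal functoriality of $\Hnc$ established in Lemma \ref{lem:entropies_monoidal}, and all that remains is to combine this with the order-theoretic facts that $\le$ is transitive and compatible with addition on $\bbR$.
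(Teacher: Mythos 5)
Your proof is correct and follows essentially the same route as the paper: transitivity of $\le$ for sequential composition and the monoidality of $\Hnc$ from Lemma \ref{lem:entropies_monoidal} for the tensor. In fact your direct transitivity argument for part (1) is cleaner than the paper's displayed computation, whose underbraced sign annotations are reversed (the differences should be $\le 0$ and $\ge 0$ respectively, yielding $\Hnc(r) - \Hnc(p) \le 0$).
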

\begin{proof}
	Suppose that \( MN \colon p \tf r \) is defined.
	Then 
	\begin{equation*}
		\Hnc(r) - \Hnc(p) = \underbrace{(\Hnc(r) - \Hnc(pM))}_{> 0} - \underbrace{(\Hnc(p) - \Hnc(pM))}_{< 0} > 0.
	\end{equation*}
	Thus \( MN \) is non-entropy-ejecting.
	Next, let \( M \colon p \tf q \) and \( N \colon p' \tf q' \) be the type of \( M \) and \( N \).
	Then, by Lemma \ref{lem:entropies_monoidal}, 
	\begin{equation*}
		\Hnc(q \kro q') = \Hnc(q) + \Hnc(q') \leq \Hnc(p) + \Hnc(p') = \Hnc(p \kro p').
	\end{equation*}
	This means that \( M \kro N \colon p \kro p' \tf q \kro q' \) is non-entropy-ejecting.
\end{proof}

Non-entropy ejection captures the notion of reversibility for physical transformations. 
We find that physical transformations of a computational system to be reversible precisely when the transformation does not lose computational entropy.

\begin{lem} \label{Lem: n.e.e._reversible}
	Let \( M \colon p \tf q \) be a physical transformation.
	Then \( M \) is non-entropy-ejecting if and only if \( \Hcomp(p) \le \Hcomp(q) \).
\end{lem}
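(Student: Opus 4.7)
The plan is to observe that this equivalence is essentially immediate once we unfold the definitions of physical transformation and non-computational entropy, exploiting the fact that morphisms in $\Phy$ are required to preserve physical entropy by definition.

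First I would recall from the definition of the category $\Phy$ (Subsection \ref{Subsec:Categories-of-physical-and-computational-contexts}) that any physical transformation $M \colon p \tf q$ satisfies the equality $\H(p) = \H(q)$; equivalently, $\Hphy(p) = \Hphy(q)$. This is the built-in closedness condition reflecting \cite[Theorem 1]{Fra18}.

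Next I would expand the definition of non-entropy-ejecting. By Definition \ref{defn:entropies}, the inequality $\Hnc(q) \le \Hnc(p)$ unfolds as
\begin{equation*}
	\Hphy(q) - \Hcomp(q) \le \Hphy(p) - \Hcomp(p).
\end{equation*}
Using $\Hphy(p) = \Hphy(q)$, the two physical-entropy terms cancel, leaving $-\Hcomp(q) \le -\Hcomp(p)$, which is precisely $\Hcomp(p) \le \Hcomp(q)$. The converse direction is the same chain of equivalences read backwards.

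There is no real obstacle here: once one observes that $M$ being a morphism of $\Phy$ already imposes conservation of physical entropy, the equivalence is a direct rearrangement of the definitions. The content of the lemma is therefore essentially conceptual, making precise the intuition that in a closed physical system any decrease in non-computational entropy must be compensated by a corresponding increase in computational entropy.
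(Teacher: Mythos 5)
Your proof is correct and follows exactly the same route as the paper's: expand $\Hnc = \Hphy - \Hcomp$, use the closedness condition $\Hphy(p) = \Hphy(q)$ built into the definition of a physical transformation to cancel the physical-entropy terms, and read off the inequality. Nothing is missing.
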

\begin{proof}
By definition,
\begin{align*}
	\Hnc(q) - \Hnc(p) &= \Hphy(q) - \Hcomp(q) - \Hphy(p) + \Hcomp(p) \\ 
	&= \Hcomp(p) - \Hcomp(q),
\end{align*} 
the latter is \( \le 0 \) if and only if \( M \) is non-entropy-ejecting.
\end{proof}

\subsection{Computational reversibility}
\label{Subsec:computational-rev}

In generalized reversible computing, every computation is considered within a specific statistical operating context. 
Considering reversibility of computations with respect to specific context leads to the notion of conditionally reversible operation, which considers reversibility over a subset of states. 
Consequently, we introduce the following notation.
Given matrix \( M \) of shape \( (X, Y) \) and \( p \) a subdistribution on \( X \), we write \( \restr{M}{p} \) for the matrix of shape \( ((\supp p) \cap X, Y) \) defined by \( (x, y) \mapsto M_{x, y} \).

\begin{dfn}[\tc{Conditional reversibility}] \label{dfn:condrev}
	Let $M \colon p \tf q$ be a deterministic computational transformation. 
	We say that \( M \) is {\bf conditionally reversible} if 
	\begin{enumerate}
		\item \( \supp p \subseteq \supp M \), and
		\item the matrix $\restr{M}{p}$ has a partial inverse. 
	\end{enumerate}
\end{dfn}

\begin{rmk}
	If the first condition holds, the second condition is equivalent to asking, by Proposition \ref{prop:partial_inverse_iff_transpose_inverse}, that \( \restr{M}{p} \) is a subpermutation matrix.
\end{rmk}

Conditionally reversible processes compose in sequence and in parallel.

\begin{lem}
\label{lem:condrev_maps_compose}
	Let \( M \colon p \tf q \) and \( N \colon q \tf r \) be conditionally reversible computational transformations.
	Then \( MN \) is conditionally reversible.
\end{lem}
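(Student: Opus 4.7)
The plan is to verify the two clauses of Definition \ref{dfn:condrev} for the composite $MN$, regarded as a morphism $p \tf r$. First, by Lemma \ref{lem:det_tot_composition}, the composite $MN$ is deterministic, since $M$ and $N$ both are. So the only thing left is to check the support condition $\supp p \subseteq \supp(MN)$ and to show that $\restr{(MN)}{p}$ is a subpermutation matrix (equivalently, by Proposition \ref{prop:partial_inverse_iff_transpose_inverse}, admits a partial inverse).

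For the support inclusion, I would argue pointwise. Fix $x \in \supp p$. By hypothesis $x \in \supp M$, so by Lemma \ref{lem:deterministic_iff_quasidirac} there is some $y \in Y$ with $M_{x,-} = \dirac{y}$. Since $pM = q$, the entry $q_y = \sum_{x'} p_{x'} M_{x', y} \ge p_x > 0$, so $y \in \supp q$. Then $y \in \supp q \subseteq \supp N$ by hypothesis on $N$, so there exists $z$ with $N_{y,-} = \dirac z$. A direct computation gives $(MN)_{x, z} = \sum_{y'} M_{x, y'} N_{y', z} = N_{y,z} = 1$, showing both $x \in \supp(MN)$ and $(MN)_{x, -} = \dirac{z}$.

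It remains to check that $\restr{(MN)}{p}$ has distinct unit-distribution rows. The previous step already proves each row is a unit distribution. For distinctness, suppose $x, x' \in \supp p$ satisfy $(MN)_{x,-} = (MN)_{x',-}$. Writing $M_{x,-} = \dirac{y}$, $M_{x',-} = \dirac{y'}$, $N_{y,-} = \dirac{z}$, $N_{y',-} = \dirac{z'}$ as above, the equality $\dirac z = \dirac{z'}$ forces $z = z'$. But $\restr{N}{q}$ being a subpermutation matrix forces $y = y'$ (since $y, y' \in \supp q$), and then $\restr{M}{p}$ being a subpermutation matrix forces $x = x'$. Hence $\restr{(MN)}{p}$ satisfies both defining conditions of a subpermutation matrix.

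The main obstacle is purely bookkeeping: threading the deterministic witnesses $y_x$ and $z_{y_x}$ through both factors while keeping track of which rows remain in the restricted supports. One could alternatively try to deduce the result abstractly from Lemma \ref{lem:partial_iso_compose}, but the restriction to $\supp p$ versus $\supp q$ makes the shapes of $\restr{M}{p}$ and $\restr{N}{q}$ not directly composable as matrices, so the concrete pointwise argument above is the cleanest route.
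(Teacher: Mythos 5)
Your proof is correct, and it takes a genuinely different route from the paper's. The paper first shows the factorisation \( \restr{(MN)}{p} = M' \restr{N}{q} \), where \( M' \) is the further restriction of \( M \) to shape \( (\supp p, \supp q) \) --- precisely the column-restriction needed to make the shapes composable, which is the obstacle you flag at the end --- and then invokes Lemma \ref{lem:partial_iso_compose} to conclude that the composite of two deterministic partial isomorphisms is again one; the support inclusion \( \supp p \subseteq \supp (MN) \) is then checked separately. You instead verify the two clauses of the subpermutation definition directly: each row \( (MN)_{x,-} \) for \( x \in \supp p \) is computed to be the unit distribution \( \dirac{z} \) obtained by threading the deterministic witnesses through \( M \) and \( N \), and distinctness of rows is propagated backwards through the injectivity of \( \restr{N}{q} \) and \( \restr{M}{p} \) on their supports. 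Both arguments hinge on the same key observation (that \( x \in \supp p \) forces the image index \( y \) to land in \( \supp q \subseteq \supp N \), so no mass is lost at the interface); the paper packages this as the vanishing of the cross terms \( M_{x,y}N_{y,z} \) for \( y \notin \supp q \), while you use it to produce the explicit witness \( z \). The paper's route reuses the abstract composition lemma and so generalises more readily; yours is more elementary, self-contained, and simultaneously delivers the support inclusion and the unit-row property in one pass. One small point worth making explicit in your write-up: in the distinctness step you need \( y, y' \in \supp \restr{N}{q} \) (not merely \( y, y' \in \supp q \)) to apply the subpermutation property of \( \restr{N}{q} \); this holds because \( N_{y,-} = \dirac{z} \neq 0 \), but it deserves a sentence.
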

\begin{proof}
	First of all, \( MN \) is deterministic by Lemma \ref{lem:det_tot_composition}.
	Suppose that \( p, q \), and \( r \) are distributions over the sets \( X, Y \) and \( Z \), respectively. 
	Let \( M' \) be the matrix of shape \( (\supp p, \supp q) \) defined by \( (x, y) \mapsto M_{x, y} \).
	We claim that \( M' \) has a partial inverse. 
	Indeed, \( \restr{M}{p} \) has a partial inverse, so by Corollary \ref{cor:partial_inverse_iff_subpermutation_iff_entropy_preserving}, it is a subpermutation matrix.
	Therefore, \( M' \) is also a subpermutation matrix, hence by the same result, it has a partial inverse.  
	Then, we claim that \( \restr{(MN)}{p} = M' \restr{N}{q} \).
	Indeed, let \( x \in \supp p \), \( z \in Z \), \( y \in Y \setminus \supp q \), and suppose that \( M_{x, y} N_{y, z} > 0 \).
	Since \( x \in \supp p \) and \( \restr{M}{p} \) is a subpermutation matrix, we necessarily have that \( M_{x, -} = \dirac{y} \).
	Thus \( q_y = (pM)_y = p_x > 0 \), since \( x \in \supp p \).
	This contradicts \( y \in Y \setminus \supp p \).
	Therefore, \( M_{x, y} N_{y, z} = 0 \).
	Then, 
	\begin{equation*}
		\left(\restr{(MN)}{p}\right)_{x, z} = \sum_{y \in Y} M_{x, y} N_{y, z} = \sum_{y \in \supp q} M_{x, y} N_{y, z} = \left(M' \restr{N}{q}\right)_{x, z}.
	\end{equation*}
	By Lemma \ref{lem:partial_iso_compose} and Lemma \ref{lem:deterministic_iff_quasidirac}, \( \restr{(MN)}{p} \) is the composite of two deterministic partial isomorphisms, thus it is a partial isomorphism as well.
	
	Next, since \( x \in \supp p \) and \( \restr{M}{p} \) is a subpermutation matrix, \( M_{x, -} = \dirac{y} \) for some \( y \in Y \).
	Then, 
	\begin{equation*}
		q_y = \sum_{x' \in X \setminus \set{x} } p_{x'} M_{x', y} + p_x > 0,
	\end{equation*}
	since \( x \in \supp p \).
	Thus, \( y \in \supp q \subseteq \supp N \).
	Then, \( (MN)_{x, -} = N_{y, -} \) is not the zero subdistribution.
	Therefore, \( x \in \supp (MN) \).
	This shows that \( \supp p \subseteq \supp (MN) \) and concludes the proof.
\end{proof}

\begin{lem}
\label{lem:condrev_is_closed_to_tensor}
	Let \( M \) and \( N \) be conditionally reversible computational transformations.
	Then \( M \kro N \) is conditionally reversible.
\end{lem}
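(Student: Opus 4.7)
The plan is to mirror the structure of Lemma \ref{lem:condrev_maps_compose} but use the tensor analogues at each step. Let $M \colon p \tf q$ and $N \colon p' \tf q'$, so $M \kro N \colon p \kro p' \tf q \kro q'$. I need to verify the two conditions of Definition \ref{dfn:condrev} for $M \kro N$ with respect to $p \kro p'$, after first confirming determinism.

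First, $M \kro N$ is deterministic by Lemma \ref{lem:deterministic_tensor}, since $M$ and $N$ are deterministic by hypothesis. Second, I would check the support condition: $\supp(p \kro p') = \supp p \times \supp p'$, and for $(x, x') \in \supp p \times \supp p'$ one has $x \in \supp M$ and $x' \in \supp N$ by the hypothesis on $M$ and $N$, so $M_{x,-}$ and $N_{x',-}$ are both nonzero (in fact, by Lemma \ref{lem:deterministic_iff_quasidirac} and conditional reversibility, they are unit distributions). Hence $(M \kro N)_{(x,x'), -}$, whose value at $(y,y')$ is $M_{x,y} N_{x',y'}$, is nonzero, showing $(x,x') \in \supp(M \kro N)$.

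The main technical point is the third step: identifying $\restr{(M \kro N)}{p \kro p'}$ with $\restr{M}{p} \kro \restr{N}{p'}$. Both matrices have shape $(\supp p \times \supp p', Y \times Y')$, and a direct computation on entries gives
\begin{equation*}
	\bigl(\restr{(M \kro N)}{p \kro p'}\bigr)_{(x,x'), (y,y')} = M_{x,y} N_{x',y'} = \bigl(\restr{M}{p} \kro \restr{N}{p'}\bigr)_{(x,x'), (y,y')}.
\end{equation*}
Since by hypothesis $\restr{M}{p}$ and $\restr{N}{p'}$ are partial isomorphisms, Lemma \ref{lem:partial_iso_tensor} provides a partial inverse of their Kronecker product, which is therefore a partial inverse of $\restr{(M \kro N)}{p \kro p'}$.

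I expect no real obstacle here: unlike the sequential case (Lemma \ref{lem:condrev_maps_compose}) where one needed to verify that composing through $Y$ does not leak outside $\supp q$, the Kronecker product is built pointwise in each coordinate, so restriction and tensor simply commute. The only thing to watch is writing out the supports cleanly so that the shape-matching between $\restr{(M \kro N)}{p \kro p'}$ and $\restr{M}{p} \kro \restr{N}{p'}$ is unambiguous.
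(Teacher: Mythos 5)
Your proposal is correct and follows essentially the same route as the paper's proof: determinism via Lemma \ref{lem:deterministic_tensor}, the identity $\supp(p\kro p') = \supp p \times \supp p'$ giving $\restr{(M\kro N)}{p\kro p'} = \restr{M}{p}\kro\restr{N}{p'}$, the partial inverse from the tensor of partial isomorphisms, and the pointwise support check. If anything, your citation of Lemma \ref{lem:partial_iso_tensor} for the partial-inverse step is the right one, where the paper's text cites Lemma \ref{lem:partial_iso_compose} at that point, apparently a slip.
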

\begin{proof}
	First, \( M \kro N \) is deterministic by Lemma \ref{lem:deterministic_tensor}.
	Let \( M \colon p \tf q \) and \( N \colon p' \tf q' \) be the type of \( M \) and \( N \), and denote by \( X, Y, X' \) and \( Y' \) the domains of \( p, q, p' \) and \( q' \) respect.	
	A direct computation shows that \( \supp (p \kro q) = \supp p \times \supp q \).
	Therefore, \( \restr{(M \kro N)}{p \kro q} = \restr{M}{p} \kro \restr{N}{q} \).
	By Lemma \ref{lem:partial_iso_compose} and Lemma \ref{lem:deterministic_iff_quasidirac}, \( \restr{M}{p} \kro \restr{N}{q} \) has a partial inverse.

	Furthermore, by assumption,
	\begin{equation*}
		\supp p \times \supp q \subseteq \supp M \times \supp N.
	\end{equation*}
	To conclude, it is enough to show that the latter is included in \( \supp (M \kro N) \).
	Let \( (x, x') \) in \( \supp M \times \supp N \).
	Then \( (M \kro N)_{xx', -} \) is the distribution defined by \( (y, y') \mapsto M_{x, y} N_{x', y'} \).
	By assumption, there exists \( y \in Y \) and \( y' \in Y' \) such that \( M_{x, y} > 0 \) and \( N_{x', y'} > 0 \), hence their product \( (M \kro N)_{xx', yy'} \) is also \( > 0 \).
	This shows that \( (x, x') \in \supp (M \kro N) \) and concludes the proof.
\end{proof}

\begin{lem} \label{lem:inequality_log}
	Let \( n \geq 2 \), and \( (p_i)_{i = 1}^n \) be a family of real number in \( (0, 1) \) such that \( \sum_{i = 1}^n p_i \le 1 \).
	Then 
	\begin{equation*}
		\sum_{i = 1}^n p_i \log(p_i) < \left(\sum_{i = 1}^n p_i\right)\log\left(\sum_{i = 1}^n p_i\right).
	\end{equation*}
\end{lem}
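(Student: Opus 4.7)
The plan is to proceed by induction on $n \geq 2$. The base case $n = 2$ is essentially Remark \ref{rmk:log_superadditive}: since $p_1, p_2 \in (0, 1)$, neither belongs to $\{0, 1\}$, and so the remark yields the strict inequality $p_1 \log p_1 + p_2 \log p_2 < (p_1 + p_2) \log(p_1 + p_2)$ directly. (If one wanted a self-contained verification, one could write $s \log s + t \log t - (s+t)\log(s+t) = -s \log(\tfrac{s+t}{s}) - t \log(\tfrac{s+t}{t})$; both subtracted terms are strictly positive when $s, t > 0$, giving the strict inequality.)

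For the inductive step, assume the statement for $n - 1 \geq 2$, and set $S_k \eqdef \sum_{i=1}^k p_i$. The key small observation is that $S_{n-1} \in (0, 1)$: the left endpoint because each $p_i > 0$, and the right endpoint strictly because $S_{n-1} = S_n - p_n \leq 1 - p_n < 1$ (this is where $p_n \neq 0$ matters). This is precisely what allows the base case to be reapplied in the step. Applying the inductive hypothesis to $p_1, \ldots, p_{n-1}$ gives
\begin{equation*}
\sum_{i=1}^{n-1} p_i \log p_i < S_{n-1} \log S_{n-1},
\end{equation*}
and applying the base case to the pair $(S_{n-1}, p_n) \in (0,1)^2$ with $S_{n-1} + p_n = S_n \leq 1$ gives
\begin{equation*}
S_{n-1} \log S_{n-1} + p_n \log p_n < S_n \log S_n.
\end{equation*}
Adding $p_n \log p_n$ to both sides of the first inequality and chaining with the second yields the desired strict bound.

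The only potentially subtle point — and the main thing to watch in the write-up — is ensuring the strict inequality is preserved throughout, which relies on the strict containment $p_i \in (0,1)$ (not merely $[0,1]$) both for invoking the remark and for guaranteeing $S_{n-1} < 1$ in the induction step. No other calculation is needed.
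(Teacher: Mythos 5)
Your proof is correct and follows essentially the same route as the paper: induction on $n$ with the base case $n=2$ supplied by Remark \ref{rmk:log_superadditive}, and the inductive step obtained by applying the hypothesis to $p_1,\dots,p_{n-1}$ and then the two-term superadditivity to the pair $(S_{n-1}, p_n)$. Your explicit check that $S_{n-1} \in (0,1)$ (needed for the second application of the remark to be strict) is a small point the paper's own write-up glosses over, so it is a welcome addition rather than a deviation.
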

\begin{proof}
	We proceed by induction on \( n \).
	The base case \( n = 2 \) is given by Remark \ref{rmk:log_superadditive}.
	Inductively, let \( n > 2 \), denote by \( p \eqdef \sum_{i = 1}^{n - 1} p_i \le 1 \).
	Then by inductive hypothesis and Remark \ref{rmk:log_superadditive},
	\begin{equation*}
		\sum_{i = 1}^n p_i \log(p_i) < p \log(p) + p_n \log(p_n) < (p + p_n) \log(p + p_n).
	\end{equation*}
	This concludes the proof.
\end{proof}

\begin{prop} \label{prop:condrev_iff_entropy_preserve}
	Let $M \colon p \tf q $ be a deterministic computational transformation. Then, the following are equivalent.
	\begin{enumerate}
	\item $\H(p) = \H(q)$; 
	\item \( M \) is conditionally reversible.
	\end{enumerate}
\end{prop}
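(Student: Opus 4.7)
The plan is to address the two implications separately and extract each from results already established. For the backward direction, assume that \( M \colon p \tf q \) is conditionally reversible. Then \( \supp p \subseteq \supp M \) and \( \restr{M}{p} \) is a subpermutation matrix by Proposition \ref{prop:partial_inverse_iff_transpose_inverse}. Viewing \( p \) as a distribution supported on \( \supp p = \supp(\restr{M}{p}) \), I would invoke Corollary \ref{cor:partial_inverse_iff_subpermutation_iff_entropy_preserving} to obtain \( \H(p\, \restr{M}{p}) = \H(p) \). Since \( p \) vanishes outside \( \supp p \), the product \( p\,\restr{M}{p} \) coincides with \( pM = q \), yielding \( \H(q) = \H(p) \).

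For the forward direction, assume \( \H(p) = \H(pM) \). By Lemma \ref{lem:deterministic_iff_quasidirac}, fix \( \fun{m} \colon \supp M \to Y \) with \( M_{x,-} = \dirac{\fun{m}(x)} \) for each \( x \in \supp M \), and revisit the inequality chain used to prove Lemma \ref{lem:deterministic_iff_entropy_decreasing}, namely
\begin{equation*}
  \H(pM) \;=\; -\sum_{y \in Y}\Bigl(\sum_{x \in \fun{m}^{-1}(y)} p_x\Bigr)\log\Bigl(\sum_{x \in \fun{m}^{-1}(y)} p_x\Bigr) \;\le\; -\sum_{x \in \supp M} p_x \log p_x \;\le\; \H(p).
\end{equation*}
The assumption forces equality at both steps. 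Lemma \ref{lem:inequality_log} applied to each fibre \( \fun{m}^{-1}(y) \) shows that equality in the first inequality occurs precisely when \( |\fun{m}^{-1}(y) \cap \supp p| \le 1 \), so \( \fun{m} \) must be injective on \( \supp p \cap \supp M \). Since \( -t\log t \ge 0 \) on \( [0,1] \) and vanishes only at \( t \in \set{0,1} \), equality in the second inequality requires that every \( x \in \supp p \setminus \supp M \) satisfies \( p_x = 1 \); but then \( p = \dirac{x} \) would give \( q = pM = 0 \), contradicting the hypothesis that \( q \) is a distribution. Hence \( \supp p \subseteq \supp M \) and \( \restr{M}{p} \) is a subpermutation matrix, which admits a partial inverse by Proposition \ref{prop:partial_inverse_iff_transpose_inverse}.

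The expected main obstacle is the bookkeeping of the equality cases. One must couple Lemma \ref{lem:inequality_log}, which forbids two elements of the same \( \fun{m} \)-fibre from simultaneously lying in \( \supp p \), with the observation that \( q \) is a distribution (not merely a subdistribution), which blocks the pathological scenario in which \( p \) concentrates a full unit of mass outside \( \supp M \). Everything else is a routine invocation of the earlier results.
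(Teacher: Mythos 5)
Your proof is correct and follows essentially the same route as the paper: the backward direction invokes Corollary \ref{cor:partial_inverse_iff_subpermutation_iff_entropy_preserving} after restricting to \( \supp p \), and the forward direction forces equality in the superadditivity chain, using Lemma \ref{lem:inequality_log} to get injectivity on the fibres and the fact that \( q \) is a genuine distribution to exclude mass of \( p \) outside \( \supp M \). The only difference is organizational --- you treat both equality cases in one chain where the paper runs two separate contradiction arguments --- which does not change the substance.
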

\begin{proof} 
	Suppose first that $\H(p) = \H(q)$. 
	We first prove that $\restr{M}{p}$ is a subpermutation matrix, which, by Corollary \ref{cor:partial_inverse_iff_subpermutation_iff_entropy_preserving}, is enough to conclude. 
	Call $A \eqdef (\supp p) \cap (\supp M)$. 
	Since \( M \) is deterministic, we have by Lemma \ref{lem:deterministic_iff_quasidirac} a well defined function \( m \colon A \to Y \) such that for all \( x \in A \), \( M_{x, -} = \dirac{\fun{m}(x)} \).
	For each \( y \in Y \), denote \( A_y \eqdef \invrs{\fun{m}}\set{y} \).
	In order to prove that $\restr{M}{p}$ is subpermutation matrix, it suffices to prove that $\fun{m}$ is injective, that is, \( A_y \) is either empty or a singleton.
	We let \( Y_0 \coprod Y_1 \coprod Y_{\geq 2} \eqdef Y \), where 
	\begin{align*}
		Y_0 &\eqdef \set{y \in Y \mid A_y \text{ empty}} \\
		Y_1 &\eqdef \set{y \in Y \mid A_y \text{ singleton}} \\
		Y_{\geq 2} &\eqdef Y \setminus (Y_0 \cup Y_1).
	\end{align*}
	If \( y \in Y_1 \), we write \( \invrs{\fun{m}}(y) \) for the unique element \( x \in A_y \) such that \( \fun{m}(x) = y \).
	By means of contradiction, assume that \( Y_{\geq 2} \neq \emptyset \).
	Notice that for all \( y \in Y \), we have 
	\begin{equation*}
		q_y = \sum_{\substack{x \in A \\ \fun{m}(x) = y}} p_x.
	\end{equation*}
	Hence, 
	\begin{align*}
		\H(q) &= - \sum_{y \in Y} \left[ \left( \sum_{x \in A_y} p_x \right) \log \left( \sum_{x \in A_y} p_x \right) \right] \\ 
		&= - \sum_{y \in Y_1} p_{\invrs{\fun{m}}(y)}\log(p_{\invrs{\fun{m}}(y)}) - \sum_{y \in Y_{\geq 2}} \left[\left( \sum_{x \in A_y} p_x \right) \log \left( \sum_{x \in A_y} p_x \right) \right] \\
		&< - \sum_{y \in Y_1} p_{\invrs{\fun{m}}(y)}\log(p_{\invrs{\fun{m}}(y)}) - \sum_{y \in Y_{\geq 2}} \sum_{x \in A_y} p_x \log(p_x) \quad \quad \text{by Lemma \ref{lem:inequality_log}} \\
		&= - \sum_{y \in Y} \sum_{x \in A_y} p_x \log(p_x) \\
		&= - \sum_{x \in A} p_x \log(p_x) \\
		&\le \H(p)
	\end{align*}
	This contradicts the assumption that \( \H(p) = \H(q) \).
	Therefore, \( Y_{\geq 2} = \emptyset \), hence \( \fun{m} \) is injective.
	We now prove that \( B = \supp p \setminus A \) is empty.
	Using a similar computation than above, we find that
	\begin{equation*}
		\H(q) = - \sum_{x \in A} p_x \log(p_x) = \H(p) = - \sum_{x \in A} p_x \log(p_x) - \sum_{x \in B} p_x \log(p_x).
	\end{equation*} 
	This means that \( \sum_{x \in B} p_x \log(p_x) = 0 \).
	Suppose that \( B \) is non-empty.
	Since for all \( x \in B \), \( p_x > 0 \), we have by Lemma \ref{lem:kernel_entropy} that \( B \) is a singleton \( \set{x} \), and \( p = \dirac{x} \).
	Since \( x \notin A \), \( q = M_{x, -} \) is the zero subdistribution, which contradicts the fact that \( q \) is a distribution.
	Therefore \( B \) is empty. 
	Since \( A \subseteq \supp p \), we deduce that \( \supp p = A = \supp p \cap \supp M \).
	This means \( \supp p \subseteq \supp M \). 

	Conversely, suppose that \( M \) is conditionally reversible. 
	Let \( p' \) be the distribution on \( \supp p \) given by \( x \mapsto p_x \).
	Then a direct computation shows \( (p' \restr{M}{p}) = q \).
	Since \( \supp p \subseteq \supp M \), we also have \( \supp p' \subseteq \supp \restr{M}{p} \).
	Thus, by Corollary \ref{cor:partial_inverse_iff_subpermutation_iff_entropy_preserving}, 
	\begin{equation*}
		 \H(q) = \H(p'\restr{M}{p}) = \H(p') = \H(p).
	\end{equation*}
	This concludes the proof.
\end{proof}

\subsection{Equivalence of physical and computational reversibility}

\noindent The following result establishes the notion of equivalence between computational and physical reversibility.

\begin{prop} \label{prop:nee_iff_condrev} 
	Let \( M \) be a physical transformation such that its corresponding computational processes namely \( \Q M \) is deterministic.
	Then, the following are equivalent.
	\begin{enumerate}
	\item $M$ is non-entropy-ejecting.
	\item $\Q M$ is conditionally reversible.
	\end{enumerate}
\end{prop}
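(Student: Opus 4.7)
The plan is to reduce both conditions to equalities of the computational entropies $\Hcomp(p)$ and $\Hcomp(q)$, using three results already in the text: Lemma \ref{Lem: n.e.e._reversible} (which reformulates non-entropy-ejection), Lemma \ref{lem:deterministic_iff_entropy_decreasing} (which says deterministic subdistribution matrices never increase entropy), and Proposition \ref{prop:condrev_iff_entropy_preserve} (which characterizes conditional reversibility of a deterministic computational transformation by entropy preservation).

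First I would unpack types. Since $M \colon p \tf q$ is a physical transformation and $\Q$ restricts to a functor $\Phy \to \Comp$, we have $\Q M \colon \Q p \tf \Q q$ as a computational transformation, with $\H(\Q p) = \Hcomp(p)$ and $\H(\Q q) = \Hcomp(q)$ by Definition \ref{defn:entropies}, and $(\Q p)(\Q M) = \Q q$ by functoriality of $\Q$.

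Next, I would record the one inequality that always holds under the hypothesis that $\Q M$ is deterministic: applying Lemma \ref{lem:deterministic_iff_entropy_decreasing} to the deterministic matrix $\Q M$ and the distribution $\Q p$ yields
\begin{equation*}
    \Hcomp(q) = \H\bigl((\Q p)(\Q M)\bigr) \le \H(\Q p) = \Hcomp(p).
\end{equation*}
By Lemma \ref{Lem: n.e.e._reversible}, $M$ is non-entropy-ejecting iff $\Hcomp(p) \le \Hcomp(q)$. Combining this with the inequality above, $M$ is non-entropy-ejecting iff $\Hcomp(p) = \Hcomp(q)$, i.e.\ iff $\H(\Q p) = \H(\Q q)$.

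Finally, since $\Q M$ is by assumption a deterministic computational transformation of type $\Q p \tf \Q q$, Proposition \ref{prop:condrev_iff_entropy_preserve} gives that $\Q M$ is conditionally reversible iff $\H(\Q p) = \H(\Q q)$. Chaining the two equivalences concludes the proof. There is no real obstacle here beyond citing the right results; the only thing worth noting is that one uses Lemma \ref{lem:deterministic_iff_entropy_decreasing} rather than the stronger Corollary \ref{cor:partial_inverse_iff_subpermutation_iff_entropy_preserving}, since we only need the one-sided entropy inequality to bridge non-entropy-ejection and conditional reversibility via the equality $\Hcomp(p)=\Hcomp(q)$.
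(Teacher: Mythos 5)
Your proof is correct and follows essentially the same route as the paper's: both reduce the statement to the equality $\Hcomp(p)=\Hcomp(q)$ by combining Lemma \ref{Lem: n.e.e._reversible}, the one-sided inequality from Lemma \ref{lem:deterministic_iff_entropy_decreasing} applied to the deterministic matrix $\Q M$, and Proposition \ref{prop:condrev_iff_entropy_preserve}. The only difference is presentational — you chain the equivalences once, whereas the paper argues the two implications separately.
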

\begin{proof}
	Let \( M \colon p \tf q \) be the type of \( M \).
	Suppose first that \( M \) is non-entropy-ejecting, then by Lemma \ref{Lem: n.e.e._reversible}, \( \Hcomp(p) \leq \Hcomp(q) \).
	Since \( \Q M \) is deterministic, Lemma \ref{lem:deterministic_iff_entropy_decreasing} implies that \( \Hcomp(q) \leq \Hcomp(p) \).
	Hence \( \Hcomp(p) = \Hcomp(q) \), so by Proposition \ref{prop:condrev_iff_entropy_preserve}, \( \Q M \) is conditionally reversible.

	Conversely, suppose that $\Q M$ is conditionally reversible. 
	Then by Proposition \ref{prop:condrev_iff_entropy_preserve}, \( \H(\Q p) = \H(\Q q) \).
	In particular, \( \Hcomp(p) \le \Hcomp(q) \).
	By Lemma \ref{Lem: n.e.e._reversible}, \( M \) is non-entropy-ejecting.
\end{proof}

As the final step in our compositional modelling framework, we use resource theories to set up the equivalence between computational and physical reversibility as stated in \ref{prop:nee_iff_condrev}.

\begin{dfn}[\tc{Resource theory of physical transformations}] \label{dfn:resource-theory-phy}
	The {\bf resource theory of physical transformations} is the resource theory \( (\Phy, \Phy_{\sf f}) \) whose free transformations are given by the wide subcategory \( \Phy_{\sf f} \) of \( \Phy \) on physical transformations \( M \) that are non-entropy-ejecting and such that their corresponding computational transformations namely \( \Q M \) is deterministic.
\end{dfn}

\begin{dfn}[\tc{Resource theory of computational transformations}] \label{dfn:resource-theory-comp}
	The {\bf resource theory of physical contexts} is the resource theory \( (\Comp, \Comp_{\sf f}) \) whose free transformations are given by the wide subcategory \( \Comp_{\sf f} \) of \( \Comp \) on computational transformations that are conditionally reversible.
\end{dfn}

\begin{rmk}
	By Lemma \ref{lem:det_tot_composition}, Lemma \ref{lem:n.e.e_compose_tensor}, and Lemma \ref{lem:condrev_maps_compose}, \(\Phy_{\sf f} \) and \(  \Comp_{\sf f} \) are well defined (monoidal) subcategories.
\end{rmk}

\begin{thm} [\tc{Fundamental theorem of GRC}] \label{thm: fundamental_theorem}
	The aggregation functor 
	\begin{equation*}
		\Q \colon (\Phy,\Phy_{\sf f}) \to (\Comp, \Comp_{\sf f})	
	\end{equation*}
	is a resource reflecting transformation of resource theories. 
\end{thm}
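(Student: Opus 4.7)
The plan is to unpack the definition of a resource-reflecting functor of resource theories and observe that both directions reduce almost immediately to Proposition \ref{prop:nee_iff_condrev}. Concretely, I need to establish two statements: first, that $\Q$ sends $\Phy_{\sf f}$ into $\Comp_{\sf f}$, and second, that whenever $\Q M$ happens to lie in $\Comp_{\sf f}$, the original $M$ was already in $\Phy_{\sf f}$. Functoriality of $\Q \colon \Phy \to \Comp$ is not in question, since it has already been extracted from the aggregation functor in Section \ref{Subsec:Categories-of-physical-and-computational-contexts}.

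For the preservation direction, I would fix an arbitrary free physical transformation $M \in \Phy_{\sf f}$. By Definition \ref{dfn:resource-theory-phy}, this means $M$ is non-entropy-ejecting and $\Q M$ is deterministic. Since $\Q M$ is deterministic, the hypothesis of Proposition \ref{prop:nee_iff_condrev} is satisfied, and the proposition converts the non-entropy-ejection of $M$ into the conditional reversibility of $\Q M$. Hence $\Q M \in \Comp_{\sf f}$ by Definition \ref{dfn:resource-theory-comp}. For the reflection direction, I would start instead from an $M$ with $\Q M \in \Comp_{\sf f}$, so that $\Q M$ is conditionally reversible. Since conditional reversibility in Definition \ref{dfn:condrev} is only meaningful for deterministic transformations, I immediately get that $\Q M$ is deterministic. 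Applying Proposition \ref{prop:nee_iff_condrev} in the converse direction then yields that $M$ is non-entropy-ejecting, and combined with the determinism of $\Q M$ this places $M$ in $\Phy_{\sf f}$.

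There is no real obstacle to the argument: the whole weight of the fundamental theorem is carried by Proposition \ref{prop:nee_iff_condrev}, and the role of the resource-theoretic framing is simply to record that this physical/computational equivalence respects composition and tensor. That compositionality is itself what Lemma \ref{lem:n.e.e_compose_tensor} and Lemma \ref{lem:condrev_maps_compose} guarantee when one checks that $\Phy_{\sf f}$ and $\Comp_{\sf f}$ are genuine wide (monoidal) subcategories, so the theorem slots into place once these lemmas and Proposition \ref{prop:nee_iff_condrev} are cited.
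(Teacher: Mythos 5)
Your proof is correct and follows exactly the paper's route: the paper's entire proof is ``Immediate by Proposition \ref{prop:nee_iff_condrev}'', and your write-up simply spells out the two directions of that reduction together with the well-definedness of the free subcategories, which the paper records in the preceding remark via Lemma \ref{lem:n.e.e_compose_tensor} and Lemma \ref{lem:condrev_maps_compose}.
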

\begin{proof}
	Immediate by Proposition \ref{prop:nee_iff_condrev}.
\end{proof}

\section{Mathematical backend}
\label{sec: mathematical_backend}

In this section, we prove (Corollary \ref{cor:aggregation_functor}) that the construction sending a partitioned matrix \( M \) to its aggregated matrix \( \Q M \) (Definition \ref{dfn:aggregation}) defines a strong monoidal functor
\begin{equation*}
    \Q \colon (\PSubMat, \kro, \pt) \to (\SubMat, \kro, \pt).
\end{equation*}
We found that this was best explained by Street's formal theory of monads \cite{street1972formal}: to any \( 2 \)\nbd category \( \K \) one can associate a \( 2 \)\nbd category \( \Mnd(\K) \) of monads internal to \( \K \).
Under favorable circumstances, the inclusion 2-functor from \( \K \) to \( \Mnd(\K) \), sending an object to the ``identity monad'' has a left 2-adjoint \( \Kl \colon \Mnd(\K) \to \K \), called the \emph{Kleisli construction}.
Typically, if \( \K \) is the 2-category of categories, functors, and natural transformations, then \( \Mnd(\K) \) is the category whose objects are indeed monads in the usual sense, and the left 2-adjoint computes the usual Kleisli category.

Since \( \SubMat \) and \( \PSubMat \) are the Kleisli categories of the subdistribution monads on sets and partitioned sets respectively, it is reasonable to hope that the functor \( \Q \) will be given by the application of the Kleisli 2-functor of a certain 1-cell in the category \( \Mnd(\K) \).
Since we want \( \Q \) to be a monoidal functor of monoidal categories, we will take \( \K \) to be the 2-category \( \MonCatlax \) of monoidal categories, lax monoidal functors and monoidal transformations.
What remains to be shown is that the subdistributions monads are well-defined objects in \( \Mnd(\MonCatlax) \) (in other words, that the subdistribution monads are \emph{monoidal monads}), and that the aggregation construction defines a \emph{colax morphism of monoidal monads} whose image under the Kleisli 2-functor is indeed \( \Q \).

\subsection{A brief review of the theory of monads and monoids}

\begin{dfn} [\tc{Monad in \( \K \)}]
    Let \( \K \) be a 2-category.
    A {\bf monad in \( \K \)} is given by a 1-cell \( t \colon x \to x \), together with 2-cells \(  \eta \colon 1_x \celto t \) and \( \mu \colon t \comp{0} t \celto t \), satisfying
    \begin{equation*}
        (\eta \comp{0} t) \comp{1} \mu = 1_t = (t \comp{0} \eta) \comp{1} \mu, \quad\text{ and } \quad (\mu \comp{0} t) \comp{1} \mu = (t \comp{0} \mu) \comp{1} \mu.
    \end{equation*}
\end{dfn}

\begin{rmk}
    Using the dual and non-symmetric version of Definition \ref{dfn:commcomonoid_object}, a monad in \( \K \) is a monoid object in the monoidal category \( (\K(x, x), \comp{0}, 1_x) \), 
\end{rmk}

\begin{dfn} [\tc{Colax morphisms}]
    Let \( (t \colon x \to x, \eta, \mu) \) and \( (t' \colon x' \to x', \eta', \mu') \) be two monads in a 2-category \( \K \).
    A {\bf colax morphism of between \( (t, \eta, \mu) \) and \( (t', \eta', \mu') \)} is a pair of a 1-cell \( q \colon x \to x' \) and a 2-cell \( \kappa \colon t \comp{0} q \celto q \comp{0} t' \) such that
    \begin{equation*}
        (\eta \comp{0} q) \comp{1} \kappa = q \comp{0} \eta' \quad\text{ and }\quad (\mu \comp{0} q) \comp{1} \kappa = (t \comp{0} \kappa) \comp{1} (\kappa \comp{0} t') \comp{1} (q \comp{0} \mu'). 
    \end{equation*}
\end{dfn}

\noindent Following \cite{street1972formal}, we have the following definition.

\begin{dfn} [\tc{Category of monads}]
    Let \( \K \) be a 2-category.
    The {\bf category of monads in \( \K \)} is the category \( \Mnd(\K) \) whose objects are monads and morphisms are colax morphisms of monads.
\end{dfn}

\begin{rmk}
    In fact, the category \( \Mnd(\K) \) can be naturally promoted to a 2-category using a notion of transformation between colax morphisms of monads.
    Since we will not use this level of generality in the article, we omitted this structure to avoid overwhelming the reader with unnecessary 2-dimensional considerations. 
\end{rmk}

\noindent In the sequel, we will be interested by the notion of monoidal monad, which is a monad in the category of (symmetric) monoidal categories, lax monoidal functors and monoidal transformations.
We recall that given \( (\C, \otimes, e) \) and \( (\C', \otimes, e') \) monoidal categories, a {\bf lax monoidal functor} is a functor \( \F \colon \C \to \C' \), together with 
\begin{itemize}
    \item a structure natural transformation \( t \) of type
        \begin{equation*}
            t_{x, y} \colon \F(x) \otimes \F(y) \to \F(x \otimes y),
        \end{equation*} 
        for all objects \( x, y \) of \( \C \), and
    \item a structure morphism \( u \colon e' \to \F(e) \) in \( \C' \).
\end{itemize}
This data is required to commutes with the associators, unitors, (and braidings) of the (symmetric) monoidal categories in a way that we do not detail here.
The interested reader can consult \cite[XI.2]{maclane1971categories}.
We also recall that a {\bf strong monoidal functor} is a lax monoidal functor such that all the structure morphisms are isomorphisms.

\begin{dfn}[\tc{Monoidal transformation}]
    Let
    \begin{equation*}
        (\F, t, u), (\F', t', u') \colon (\C, \otimes, e) \to (\C', \otimes, e') 
    \end{equation*}
    be two lax monoidal functors.
    A {\bf monoidal transformation from \( \F \) to \( \F' \)} is a natural transformation \( \kappa \colon \F \celto \F' \) such that the diagrams
    \begin{center}
        \begin{tikzcd}
            {\F(x) \otimes \F(y)} & {\F'(x) \otimes \F'(y)} & {\text{and}} && {e'} \\
            {\F(x \otimes y)} & {\F'(x \otimes y)} && {\F(e)} && {\F'(e)}
            \arrow["{\kappa_x \otimes \kappa_y}", from=1-1, to=1-2]
            \arrow["{t_{x, y}}"', from=1-1, to=2-1]
            \arrow["{t'_{x, y}}", from=1-2, to=2-2]
            \arrow["u"', from=1-5, to=2-4]
            \arrow["{u'}", from=1-5, to=2-6]
            \arrow["{\kappa_{x \otimes y}}"', from=2-1, to=2-2]
            \arrow["{\kappa_e}"', from=2-4, to=2-6]
        \end{tikzcd}
    \end{center} 
    commute.
\end{dfn}

\noindent Following \cite{zawadowski2012formal}, symmetric monoidal categories, lax monoidal functors, and monoidal transformations are respectively the 0-cell, 1-cells, and 2-cells of the 2-category \( \MonCatlax \).

\begin{dfn} [\tc{Monoidal monad}] \label{dfn:monoidal_monad}
    A {\bf monoidal monad} is a monad in the 2-category \( \MonCatlax \).
\end{dfn}

\noindent We now recall the classical definition of the Kleisli category.

\begin{dfn} [\tc{Kleisli category}] \label{dfn:Kleisli}
    Let \( (\T, \mu, \eta) \) be a monad on a category \( \C \).
    The {\bf Kleisli category of \( \T \)} is the category \( \Kl(\T) \) whose objects are the same as \( \C \) and whose morphisms from \( x \) to \( y \) are given by morphisms \( f \colon x \to \T(y) \) in \( \cls{D} \).
    The identity is given by the monad unit \( \eta \).
    The Kleisli composition of a composable pair \( (f, g) \) is given by \( f\T(g)\mu \).
\end{dfn}  

\begin{rmk} \label{rmk:Kleisli_2_functor}
    The Kleisli construction extends to a functor
    \begin{equation*}
        \Kl \colon \Mnd(\Cat) \to \Cat,
    \end{equation*}
    by sending in particular a colax morphism of monads \( (\fun{Q}, \kappa) \) from \( \T \) to \( \T' \) a functor \(  \Kl(\fun{Q}, \kappa) \colon \Kl(\T) \to \Kl(\T') \), given by \( \fun{Q} \) on objects and sending a Kleisli morphism \( f \) from \( x \) to \( y \) to \( f\kappa_y \).

    Furthermore, \( \Kl \) is left adjoint to the inclusion functor \( \iota \colon \Cat \incl \Mnd(\Cat) \) sending a category to the identity monad on it.
    The unit of the adjunction evaluated at a monad \( (\T \colon \C \to \C, \eta, \mu) \) is the usual identity on objects functor \( \T^\kl \colon \C \to \Kl(\T) \) sending a morphism \( f \colon x \to y \) to \( f\eta_y \). 
\end{rmk}

\begin{prop} \label{prop:transfer_monoidal_monad}
    Let \( (\C, \otimes, e) \) be a symmetric monoidal category, \( \T \colon \C \to \C \) be a monoidal monad with structure \( t_{x, x'} \colon \T(x) \otimes \T(x') \to \T(x \otimes x') \).
    Then the Kleisli category of \( \T \) has a symmetric monoidal product defined as follows:
    \begin{enumerate}
        \item the monoidal unit is \( e \);
        \item given two objects \( x \) and \( y \) of \( \Kl(\T) \), then \( x \otimes^\kl y \eqdef x \otimes y \);
        \item given two morphisms \( f \colon x \to y \) and \( f' \colon x' \to y' \) in \( \Kl(\T) \), then
        \begin{equation*}
            f \otimes^\kl f' \eqdef (f \otimes f')t_{y, y'}.
        \end{equation*}
    \end{enumerate} 
    This monoidal structure is such that canonical functor \( \T^\kl \colon \C \to \Kl(\T) \) is strict monoidal.
    Furthermore, if \( \T' \colon \C' \to \C' \) is another monoidal monad on a symmetric monoidal category \( \C' \), and \( (\fun{Q}, \kappa) \) is a colax morphism between the monoidal monad \( \T \) and \( \T' \), then the functor \( \Kl(\fun{Q}, \kappa) \) is lax monoidal with respect to the monoidal structures on \( \Kl(\T) \) and \( \Kl(\T') \), and strong monoidal if \( \fun{Q} \) is strong.
\end{prop}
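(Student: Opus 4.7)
The plan is to break the statement into three verifications: that \( \otimes^\kl \) defines a bifunctor on \( \Kl(\T) \) together with inherited coherences making it a symmetric monoidal category, that \( \T^\kl \) is strict monoidal, and that \( \Kl(\fun{Q}, \kappa) \) carries a canonical lax monoidal structure (strong when \( \fun{Q} \) is). Throughout I would think of \( \Kl(\T) \) as the category with the same objects as \( \C \) and the composition recalled in Definition \ref{dfn:Kleisli}, and of \( \T^\kl \colon \C \to \Kl(\T) \) as the canonical identity-on-objects functor of Remark \ref{rmk:Kleisli_2_functor}.

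First I would check that \( \otimes^\kl \) is a well-defined functor. Identities are handled by the unit axiom of the monoidal monad: \( \eta_x \otimes^\kl \eta_{x'} = (\eta_x \otimes \eta_{x'}) t_{x, x'} = \eta_{x \otimes x'} \), which is exactly the compatibility of the monad unit \( \eta \) with the lax monoidal structure \( t \) on \( \T \). Bifunctoriality, i.e. the identity
\begin{equation*}
    (f \circ^\kl g) \otimes^\kl (f' \circ^\kl g') = (f \otimes^\kl f') \circ^\kl (g \otimes^\kl g'),
\end{equation*}
unfolds into an equality involving \( t \) and \( \mu \), which follows by naturality of \( t \) together with the compatibility of \( \mu \) with \( t \) (the second monoidal monad axiom expressing that \( \mu \colon \T\T \celto \T \) is a monoidal transformation). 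The associator, unitors, and braiding are then defined as the images under \( \T^\kl \) of the corresponding morphisms of \( \C \); the coherence pentagons, triangles, and hexagons in \( \Kl(\T) \) follow from those in \( \C \) provided we know \( \T^\kl \) is strict monoidal.

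The strict monoidality of \( \T^\kl \) is a short calculation: on objects the tensor is defined to agree, and on morphisms we get
\begin{equation*}
    \T^\kl(f) \otimes^\kl \T^\kl(g) = (f \eta_y \otimes g \eta_{y'}) t_{y, y'} = (f \otimes g)(\eta_y \otimes \eta_{y'}) t_{y, y'} = (f \otimes g) \eta_{y \otimes y'} = \T^\kl(f \otimes g),
\end{equation*}
by naturality of \( \otimes \) and the unit axiom of the monoidal monad recalled above. This also doubles as the verification that the coherence isomorphisms of \( \C \) are mapped by \( \T^\kl \) to structural isomorphisms of \( \Kl(\T) \) satisfying the required axioms.

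For the last statement, given \( (\fun{Q}, \kappa) \) a colax morphism of monoidal monads with lax monoidal structure \( s_{x, x'} \colon \fun{Q}(x) \otimes \fun{Q}(x') \to \fun{Q}(x \otimes x') \) and unit \( u \colon e' \to \fun{Q}(e) \) on \( \fun{Q} \), I would define the lax monoidal structure on \( \Kl(\fun{Q}, \kappa) \) by post-composing with \( \eta' \), namely \( s_{x, x'} \eta'_{\fun{Q}(x \otimes x')} \) and \( u\eta'_{\fun{Q}(e)} \). The coherence axioms reduce to those of \( s \) and \( u \) together with the axioms of \( (\fun{Q}, \kappa) \) as a colax monad morphism; the main verification is that compatibility with the Kleisli composition requires the axiom \( (\mu \comp{0} \fun{Q}) \comp{1} \kappa = (\T \comp{0} \kappa) \comp{1} (\kappa \comp{0} \T') \comp{1} (\fun{Q} \comp{0} \mu') \), which is precisely where the colax structure intervenes. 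If \( \fun{Q} \) is strong, then \( s \) and \( u \) are invertible in \( \C' \), and hence their composites with \( \eta' \) are invertible in \( \Kl(\T') \) (with inverse given by the Kleisli image of the inverse), giving that \( \Kl(\fun{Q}, \kappa) \) is strong monoidal. The subtle step, and the one I would spend most care on, is tracking where naturality of \( \kappa \) versus naturality of \( s \) is used in the bifunctoriality calculation, since these interact non-trivially via the comultiplication.
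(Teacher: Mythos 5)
Your proof is essentially correct, but it takes a genuinely different route from the paper. You carry out the direct, hands-on verification: interchange for \( \otimes^\kl \) via naturality of \( t \) and the monoidal-transformation axiom for \( \mu \), strict monoidality of \( \T^\kl \) via the unit axiom, and the lax structure on \( \Kl(\fun{Q}, \kappa) \) given by \( s_{x,x'}\eta' \) and \( u\eta' \). The paper instead proves nothing by hand: it invokes \cite[Theorem 6.1]{zawadowski2012formal}, which states that the 2-category \( \MonCatlax \) admits \emph{standard} Kleisli objects, so that the left 2-adjoint \( \Kl \colon \Mnd(\MonCatlax) \to \MonCatlax \) agrees with the ordinary Kleisli construction on underlying categories; the monoidal structure on \( \Kl(\T) \), the strict monoidality of \( \T^\kl \) (as the unit of the 2-adjunction), and the lax monoidality of \( \Kl(\fun{Q},\kappa) \) then all come for free from 2-functoriality. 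Your approach buys a self-contained argument with explicit formulas and makes visible exactly which axiom is used where; the paper's buys brevity and fits the section's stated theme of deriving everything from the formal theory of monads. One place your sketch is slightly thinner than it should be: obtaining the associator, unitors, and braiding of \( \Kl(\T) \) as images under \( \T^\kl \) requires checking that these are \emph{natural} with respect to arbitrary Kleisli morphisms, which uses the coherence of \( t \) with \( \alpha, \lambda, \rho, \sigma \) (the lax monoidal functor axioms for \( \T \)), not merely the coherence equations of \( \C \) transported along a strict monoidal functor; this is routine but is a genuine extra verification beyond what you list.
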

\begin{proof}
    By \cite[Theorem 6.1]{zawadowski2012formal}, the 2-category \( \MonCatlax \) of symmetric monoidal categories, lax monoidal functors and monoidal transformation admits Kleisli objects.
    This means that the inclusion \( \iota \colon \MonCatlax \incl \Mnd(\MonCatlax) \), sending a symmetric monoidal category to the identity monad, possess a left adjoint \( \Kl \).
    By that same result, the Kleisli objects are standard, meaning that the functor \( \Kl \) coincides with the functor of Remark \ref{rmk:Kleisli_2_functor} at the level of the underlying categories.
    Using the unit of the adjunction \( \Kl \dashv \iota \), one sees that the \( \T^\kl \) is indeed strict monoidal.
    Finally, since \( \Kl(\fun{Q}, \kappa) \) coincides with \( \fun{Q} \) at the level of objects, if \( \fun{Q} \) is strong, so \( \Kl(\fun{Q}, \kappa) \).
\end{proof}

\noindent We conclude this section on the formal theory of monads and monoids with a well-known corollary, which states that the Kleisli category of a monoidal monad on a CDU category is again a CDU category, see \cite[Corollary 3.2]{fritz2020markov}.

\begin{cor} \label{cor:transfer_CDU}
    Let \( (\C, \otimes, e) \) be a CDU category with copy \( \Delta \) and discard \( \varepsilon \), let \( \T \colon \C \to \C \) be a monoidal monad on a symmetric monoidal category \( (\C, \otimes, e) \).
    Then the symmetric monoidal category \( (\Kl(\T), \otimes^\kl, e) \) is a CDU category.
    For all objects \( x \) in \( \C \), the copy morphism is given by \( \Delta_x\eta_x \) and the discard morphism by \( \varepsilon\eta_e \). 
\end{cor}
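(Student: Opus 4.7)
The plan is to deduce the statement almost entirely from Proposition \ref{prop:transfer_monoidal_monad}. First, that result gives the symmetric monoidal structure $(\Kl(\T), \otimes^\kl, e)$, together with the fact that $\T^\kl \colon \C \to \Kl(\T)$ is strict symmetric monoidal. So the only task is to exhibit a uniform family of cocommutative comonoid structures on the objects of $\Kl(\T)$.

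The key idea is that any lax (in particular strict) symmetric monoidal functor sends cocommutative comonoid objects to cocommutative comonoid objects. Applied to $\T^\kl$, which is the identity on objects, this transports the comonoid $(x, \Delta_x, \varepsilon_x)$ of $\C$ to a comonoid $(x, \T^\kl \Delta_x, \T^\kl \varepsilon_x)$ in $\Kl(\T)$. Unfolding Remark \ref{rmk:Kleisli_2_functor}, $\T^\kl \Delta_x = \Delta_x \comp{} \eta_{x \otimes x}$ and $\T^\kl \varepsilon_x = \varepsilon_x \comp{} \eta_e$ (which is what the statement asserts, modulo a harmless notational elision at the codomain). The three comonoid axioms (counit, coassociativity, cocommutativity) in $\Kl(\T)$ are then the images under $\T^\kl$ of the corresponding axioms in $\C$, so they hold automatically.

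It remains to verify the two uniformity conditions. For the unit, $\Delta_e = \idd{e}$ and $\varepsilon_e = \idd{e}$ in $\C$ yield $\T^\kl \Delta_e = \T^\kl \idd{e} = \idd{e}^\kl$ and similarly for $\varepsilon$, because $\T^\kl$ is a functor and $e$ is the monoidal unit of $\Kl(\T)$. For the pair condition, strict monoidality of $\T^\kl$ gives $\T^\kl(\varepsilon_x \otimes \varepsilon_y) = \T^\kl \varepsilon_x \otimes^\kl \T^\kl \varepsilon_y$ and $\T^\kl(\Delta_x \otimes \Delta_y)(x \otimes \sigma_{x,y} \otimes y)) = (\T^\kl \Delta_x \otimes^\kl \T^\kl \Delta_y) \comp{} (x \otimes^\kl \sigma^\kl_{x,y} \otimes^\kl y)$, where $\sigma^\kl = \T^\kl \sigma$ is the braiding in $\Kl(\T)$. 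Uniformity in $\C$ then forces uniformity in $\Kl(\T)$.

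I expect no real obstacle: the only thing to be careful about is to correctly identify the monoidal-structure data on $\Kl(\T)$ provided by Proposition \ref{prop:transfer_monoidal_monad} and to remember that $\T^\kl$ being strict monoidal means it literally preserves $\otimes$, the unit, and the braiding on the nose, so the comonoid axioms and uniformity transport without any coherence bookkeeping.
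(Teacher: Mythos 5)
Your proposal is correct and follows the paper's own route: the paper's proof likewise consists of invoking Proposition \ref{prop:transfer_monoidal_monad} to get that \( \T^\kl \) is strict monoidal and then observing that \( \T^\kl(\Delta_x) \) and \( \T^\kl(\varepsilon_x) \) give a uniform cocommutative comonoid structure on \( \Kl(\T) \). You simply spell out the transport of the comonoid axioms and the uniformity conditions in more detail than the paper does (and correctly flag the codomain subscript \( \eta_{x \otimes x} \) that the statement elides).
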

\begin{proof}
    By Proposition \ref{prop:transfer_monoidal_monad}, the canonical functor \( \T^\kl \colon \C \to \Kl(\T) \) is strict monoidal.
    Therefore, for all objects \( x \) in \( \C \), \( \T^\kl(\Delta_x) \) and \( \T^\kl(\varepsilon_x) \) defines a cocommutative monoid structure in \( \Kl(\T) \), which is uniform.
\end{proof}

\subsection{The Kleisli categories of the subdistribution monads} 

There is a forgetful functor \( \fun{U} \colon \PSet \to \Set \) obtained by forgetting the equivalence relation of a partitioned set.
The functor \( \fun{U} \) has both left and right adjoints defined by assigning to a set \( X \) the equality relation and the relation \( X \times X \), respectively.   

\noindent The following lemma is left as an exercise to the reader. 
\begin{lem} \label{lem:pset_co_complete}
    The category \( \PSet \) is complete and cocomplete.
    Given a diagram \( \fun{F} \colon \cls J \to \PSet \),
    \begin{enumerate}
        \item the limit of \( \fun{F} \) is given by the partitioned set \( (\lim \fun{UF}, \sim) \) where \( \sim \) is the finest equivalence relation on \( \lim \fun{UF} \) making all the projections morphisms of partitioned sets. 
        \item the colimit of \( \fun{F} \) is given by the partitioned set \( (\colim \fun{UF}, \sim) \) where \( \sim \) is the coarsest equivalence relation on \( \colim \fun{UF} \) making all the coprojections morphisms of partitioned sets. 
    \end{enumerate}
\end{lem}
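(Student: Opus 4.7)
The plan is to construct (co)limits in \( \PSet \) by taking the set-theoretic (co)limit of the underlying diagram in \( \Set \) --- which exists by completeness and cocompleteness of \( \Set \) --- and equipping it with a suitable equivalence relation dictated by the universal property. Since the forgetful functor \( \fun{U} \) is faithful, the underlying set of any (co)limit in \( \PSet \) is forced to be the set-theoretic (co)limit, so only the choice of equivalence relation remains.

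For limits, given \( \fun{F} \colon \cls{J} \to \PSet \), form \( L \eqdef \lim \fun{UF} \) in \( \Set \) with projections \( \pi_j \), and define \( x \sim y \) to mean \( \pi_j(x) \sim_{F(j)} \pi_j(y) \) for all \( j \in \cls{J} \). This is manifestly an equivalence relation and the projections respect it by construction. To verify the universal property, given a cone \( (c_j \colon (C, \sim_C) \to \fun{F}(j))_j \) in \( \PSet \), the set-theoretic universal property yields a unique function \( u \colon C \to L \) with \( u \pi_j = c_j \); if \( c \sim_C c' \) then \( \pi_j(u(c)) = c_j(c) \sim_{F(j)} c_j(c') = \pi_j(u(c')) \) for every \( j \), hence \( u(c) \sim u(c') \), so \( u \) is a morphism in \( \PSet \).

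For colimits, dually, form \( L' \eqdef \colim \fun{UF} \) in \( \Set \) with coprojections \( \iota_j \), and equip \( L' \) with the smallest equivalence relation \( \sim' \) containing the pairs \( (\iota_j(x), \iota_j(y)) \) for all \( j \) and all \( x \sim_{F(j)} y \); this exists as the intersection of all equivalence relations containing these generating pairs, and by construction the coprojections respect \( \sim' \). Given a cocone \( (c_j \colon \fun{F}(j) \to (C, \sim_C))_j \) in \( \PSet \), the set-theoretic colimit gives a unique function \( u \colon L' \to C \) with \( \iota_j u = c_j \); the key observation is that the kernel relation \( \set{(x, y) \in L' \times L' \mid u(x) \sim_C u(y)} \) is an equivalence relation on \( L' \) containing every generating pair (since \( u(\iota_j(x)) = c_j(x) \sim_C c_j(y) = u(\iota_j(y)) \) whenever \( x \sim_{F(j)} y \)), hence contains \( \sim' \), showing that \( u \) is a morphism in \( \PSet \).

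The proof is essentially an unwinding of definitions and presents no serious obstacle; the only slightly subtle step is in the colimit case, where one must exploit the fact that a function preserves an equivalence relation if and only if it preserves a chosen generating set of pairs --- which is immediate once one notes that the kernel of a function into a partitioned set is itself an equivalence relation on the domain.
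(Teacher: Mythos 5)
Your proof is correct and complete; the paper in fact leaves this lemma as an exercise, so there is no proof of record to compare against. Both of your constructions --- the relation \( x \sim y \) iff \( \pi_j(x) \sim_{F(j)} \pi_j(y) \) for all \( j \) on \( \lim \fun{UF} \), and the equivalence relation generated by the pairs \( (\iota_j(x), \iota_j(y)) \) with \( x \sim_{F(j)} y \) on \( \colim \fun{UF} \) --- are the standard ones, and your verification of the two universal properties (in particular the kernel-relation argument showing the induced map out of the colimit respects the generated relation) is sound. One caveat worth flagging: with the usual convention that the finest equivalence relation is equality and the coarsest is the total relation, the relation you put on \( \lim \fun{UF} \) is the \emph{coarsest} one making the projections morphisms of partitioned sets (the finest such relation is just equality, which would fail the universal property), and dually the relation on \( \colim \fun{UF} \) is the \emph{finest} one making the coprojections morphisms. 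The lemma as stated appears to have these two adjectives swapped; your constructions are the mathematically correct ones, and they agree with the product structure on \( \PSet \) that the paper uses elsewhere, namely \( (x, y) \sim (x', y') \) iff \( x \sim x' \) and \( y \sim y' \), as invoked in Proposition \ref{prop:D_lift_to_monad_on_pset}.
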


\begin{cor} \label{cor:pset_monoidal}
    The category \( \PSet \) is symmetric monoidal with respect to the cartesian product, with monoidal unit \( \pt \), the terminal set with its unique equivalence relation.
\end{cor}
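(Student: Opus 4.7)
The plan is to deduce this immediately from Lemma \ref{lem:pset_co_complete} together with the standard fact that any category with finite products is symmetric monoidal in a canonical (cartesian) way. Specifically, by Lemma \ref{lem:pset_co_complete}, \( \PSet \) is complete, hence admits all finite limits. In particular it possesses a terminal object and binary products, and the cartesian monoidal structure associated to any such category — using the canonical associators, unitors, and braiding built from universal properties — makes \( (\PSet, \times, \pt) \) a symmetric monoidal category.

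The only thing left to do is verify that the explicit descriptions of \( \pt \) and of the binary product obtained from Lemma \ref{lem:pset_co_complete} coincide with the description given informally in Subsection 3.1. First, the terminal object is the limit of the empty diagram: this yields the singleton set \( \{*\} \) equipped with its unique (and hence finest) equivalence relation, giving \( \pt \). Second, given partitioned sets \( (X, \sim) \) and \( (Y, \sim) \), Lemma \ref{lem:pset_co_complete} constructs \( (X, \sim) \times (Y, \sim) \) as \( X \times Y \) equipped with the finest equivalence relation \( \approx \) making both projections \( \pi_X \) and \( \pi_Y \) partition-respecting. I would show that this is exactly the componentwise relation, that is, \( (x, y) \approx (x', y') \) iff \( x \sim x' \) and \( y \sim y' \). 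The componentwise relation is an equivalence relation under which the projections are morphisms of partitioned sets, so the finest such is contained in it; conversely, any relation under which both projections are partition-respecting must identify \( (x, y) \) with \( (x', y') \) only when \( x \sim x' \) and \( y \sim y' \), giving the reverse inclusion.

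There is no real obstacle here: the content is entirely packaged in Lemma \ref{lem:pset_co_complete}, and the corollary is a one-line consequence once one observes that a finitely complete category is cartesian symmetric monoidal. The only minor verification is the explicit form of the binary product, which is a short unpacking of the universal property.
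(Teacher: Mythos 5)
Your approach is the same as the paper's: the corollary carries no proof there precisely because it is the standard observation that a category with finite products (which \( \PSet \) has by Lemma \ref{lem:pset_co_complete}) is canonically cartesian symmetric monoidal, and your identification of the terminal object and the binary product with the explicit descriptions of Subsection 3.1 is the only content to check. So the proposal is essentially correct and essentially the paper's (implicit) argument.

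One step in your verification of the binary product does not do what you claim, however. You argue that the \emph{finest} equivalence relation \( \approx \) on \( X \times Y \) making both projections partition\nbd respecting equals the componentwise relation, via two inclusions. Your first inclusion (finest \( \subseteq \) componentwise) is fine, but your ``conversely'' clause --- that any relation making the projections morphisms is contained in the componentwise relation --- establishes the \emph{same} inclusion again, not the reverse one. Indeed, taken literally, the finest equivalence relation making the projections morphisms is the equality relation, which does not satisfy the universal property of the product. The relation you actually want is the \emph{coarsest} equivalence relation contained in the componentwise one, i.e.\ the componentwise relation itself: it is the largest \( \approx \) for which the projections are morphisms, and largeness is exactly what is needed so that the pairing \( a \mapsto (f(a), g(a)) \) of a cone \( f \colon (A, \sim) \to (X, \sim) \), \( g \colon (A, \sim) \to (Y, \sim) \) is again a morphism of partitioned sets. (The same finest/coarsest slip is present in the wording of Lemma \ref{lem:pset_co_complete}, which is presumably where you inherited it.) The fix is one line --- either replace ``finest'' by ``coarsest'' and note that the coarsest relation contained in the componentwise relation is the componentwise relation, or simply verify the universal property of the componentwise product directly --- after which the rest of your argument goes through unchanged.
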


\begin{dfn} [\tc{Quotient functor}]
    If \( (X, \sim) \) is a partitioned set and \( x \in X \), we write \( \eqclass x \) for the equivalence class of \( x \), and denote by \( \quot X \sim \) the quotient of \( X \) by the equivalence relation \( \sim \).
    This comes equipped with the canonical projection \( \eqclass{-} \colon X \to \quot{X}{\sim} \) sending \( x \) to its equivalence class \( [x] \). 
    The assignment \( (X, \sim) \mapsto \quot{X}{\sim} \) extends to the \emph{quotient functor} \( \Quot \colon \PSet \to \Set \) by sending any \( f \colon (X, \sim) \to (Y, \sim) \) to \( \Quot (f) \colon \quot{X}{\sim} \to \quot{Y}{\sim} \) defined by \( \eqclass{x} \mapsto \eqclass{f(x)} \).    
\end{dfn}

\begin{dfn}[\tc{Subdistribution functor}]    
    Let \( \D X \) be the set of all subdistributions on \( X \). 
    The mapping \( X \mapsto \D X \) extends to a endofunctor \( \D \colon \Set \to \Set \) by mapping any function $f: X \to Y$ and any subdistribution $p$ on \( X \) to the subdistribution
    \begin{equation}
        \D f (p) \colon y \mapsto \sum_{x \in f^{-1}(y)} p(x).
    \end{equation}
\end{dfn}

\noindent By \cite[Definition 2.11]{DiM23}, we may extend \( \D \) to a monad on \( \Set \) as follows.

\begin{dfn} [\tc{Subdistribution monad on Set}] \label{dfn:subdist_set}
\label{Defn:subdistribution-monad}
    We define the natural transformations 
    \begin{equation*}
        \eta \colon \bigid{} \Rightarrow \D,\quad\quad \mu \colon \D\D \Rightarrow \D,
    \end{equation*}
    where for \( X \in \Set \), \( \eta_X \) maps \( x \in X \) to \( \dirac x \) and \( \mu_X \) maps \( \sigma \in \D\D X \) to the subdistribution
    \begin{equation*}
        x \mapsto \sum_{p \in \D X} \sigma(p) p_x.
    \end{equation*}
    The monad \( (\D, \mu, \eta) \) is called the {\bf subdistribution monad on \( \Set \)}.
\end{dfn}

\begin{lem} \label{lem:subdistribution_monoidal}
    The monad \( \D \) is monoidal with respect to the cartesian product on \( \Set \), with structural transformation \( t_{X, Y} \colon \D X \times \D Y \to \D(X \times Y) \) defined, for all \( (\sigma, \tau) \in \D X \times \D Y \), by
    \begin{equation*}
        t_{X, Y}(\sigma, \tau) \colon (x, y) \mapsto \sigma(x) \tau(y),
    \end{equation*}
    and \( u \colon \pt \to \D \pt \) classifying the unique distribution on \( \pt \). 
\end{lem}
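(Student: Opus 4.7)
The plan is to verify in order the three layers of structure required by Definition \ref{dfn:monoidal_monad}: that \( (\D, t, u) \) is a lax monoidal endofunctor of \( (\Set, \times, \pt) \), and that the natural transformations \( \eta \) and \( \mu \) of Definition \ref{dfn:subdist_set} are monoidal. None of the individual verifications is conceptually deep; the task is essentially to reduce everything to Fubini-style identities for finitely supported sums.

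First, I would check that \( t_{X, Y}(\sigma, \tau) \) is well-defined as a subdistribution on \( X \times Y \). Since \( \sigma \) and \( \tau \) are finitely supported, so is \( (x, y) \mapsto \sigma(x) \tau(y) \); its total mass is
\begin{equation*}
    \sum_{(x, y) \in X \times Y} \sigma(x) \tau(y) = \Bigl(\sum_{x} \sigma(x)\Bigr) \Bigl(\sum_{y} \tau(y)\Bigr) \le 1,
\end{equation*}
which is bounded by \( 1 \) since each factor is. Naturality of \( t \) in both variables is a direct unpacking: for \( f \colon X \to X' \) and \( g \colon Y \to Y' \), the value of \( \D(f \times g) \circ t_{X, Y} \) at \( (\sigma, \tau) \) evaluated at \( (x', y') \in X' \times Y' \) is \( \sum_{f(x) = x', g(y) = y'} \sigma(x) \tau(y) \), which factors as \( (\D f(\sigma))(x') (\D g(\tau))(y') \), i.e. \( t_{X', Y'}(\D f(\sigma), \D g(\tau))(x', y') \).

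Next, I would verify the coherence diagrams for \( (\D, t, u) \) to be a symmetric lax monoidal functor. The associator compatibility reduces to the equality \( \sigma(x) \tau(y) \rho(z) = \sigma(x) (\tau(y) \rho(z)) \); the unitor compatibility uses that \( u \) picks out \( \delta_* \), so \( t_{\pt, X}(\delta_*, \sigma) = \sigma \) under the canonical isomorphism \( \pt \times X \cong X \); and the braiding compatibility holds since \( \sigma(x)\tau(y) = \tau(y)\sigma(x) \). Each of these is a straightforward pointwise computation.

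Finally, I would check that \( \eta \) and \( \mu \) are monoidal natural transformations, which is the content that actually makes \( \D \) a monad internal to \( \MonCatlax \). For \( \eta \), the required identity is \( \eta_{X \times Y}(x, y) = t_{X, Y}(\eta_X(x), \eta_Y(y)) \), i.e. \( \delta_{(x, y)} = t_{X, Y}(\delta_x, \delta_y) \), which is immediate. For \( \mu \), I expect this to be the main obstacle, since it is the only step that requires nontrivial reindexing: one must show that
\begin{equation*}
    \mu_{X \times Y} \circ \D(t_{X, Y}) \circ t_{\D X, \D Y} = t_{X, Y} \circ (\mu_X \times \mu_Y).
\end{equation*}
Evaluating both sides at \( (\Sigma, T) \in \D\D X \times \D\D Y \) and at \( (x, y) \in X \times Y \), the left-hand side becomes \( \sum_{(p, q) \in \D X \times \D Y} \Sigma(p) T(q) p(x) q(y) \), while the right-hand side is \( \bigl(\sum_p \Sigma(p) p(x)\bigr)\bigl(\sum_q T(q) q(y)\bigr) \); these agree by distributing the product over the (finitely supported) sums. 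Combined, these verifications show that \( (\D, \eta, \mu) \) together with \( (t, u) \) defines a monad in \( \MonCatlax \), as required.
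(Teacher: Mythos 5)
Your proposal is correct and follows essentially the same route as the paper: establish that \( (\D, t, u) \) is a lax (symmetric) monoidal functor by pointwise real-number arithmetic, observe that \( \eta \) is monoidal since \( t_{X,Y}(\dirac{x}, \dirac{y}) = \dirac{(x,y)} \) and \( u = \eta_\pt \), and reduce the monoidality of \( \mu \) to the identity \( \sum_{(p,q)} \Sigma(p) T(q) p(x) q(y) = \bigl(\sum_p \Sigma(p) p(x)\bigr)\bigl(\sum_q T(q) q(y)\bigr) \), exactly the computation in the paper. You are in fact slightly more explicit than the paper on well-definedness, naturality, and the coherence diagrams, and only omit the (trivial) unit-compatibility square for \( \mu \), which the paper dispatches by citing the monad axiom and \( u = \eta_\pt \).
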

\begin{proof}
    That \( \D \) defines a lax monoidal functor follows readily from associativity and unitality of multiplication of real numbers.
    Then, given \( (x, y) \in X \times Y \), we have \( t_{X, Y}(\dirac{x}, \dirac{y}) = \dirac{(x, y)} \) and \( u = \eta_\pt \).
    Thus, \( \eta \) is a monoidal transformation.
    Next, to show that \( \mu \) is a monoidal transformation, we must first show that
    \begin{center}
        \begin{tikzcd}
            {\D\D X \times \D \D Y} && {\D X \times \D Y} \\
            {\D(\D X \times \D Y)} \\
            {\D \D (X \times Y)} && {\D(X \times Y)}
            \arrow["{\mu_X \times \mu_Y}", from=1-1, to=1-3]
            \arrow["{t_{\D X, \D Y}}"', from=1-1, to=2-1]
            \arrow["{t_{X, Y}}", from=1-3, to=3-3]
            \arrow["{\D(t_{X, Y})}"', from=2-1, to=3-1]
            \arrow["{\mu_{X \times Y}}"', from=3-1, to=3-3]
        \end{tikzcd}
    \end{center}
    commutes.
    A careful diagram chasing reveals that the lower path sends \( (\sigma, \tau) \) to the subdistribution on \( X \times Y \) given by
    \begin{equation*}
        (x, y) \mapsto \sum_{s \in \D(X \times Y)} \left(\sum_{(p, q) \in \invrs{t}_{X, Y}(s)} \sigma(p)\tau(q) \right) s(x, y),
    \end{equation*}
    which is the same as
    \begin{equation*}
       (x, y) \mapsto \sum_{(p, q) \in \D X \times \D Y} \sigma(p)p_x \tau(q)q_y,
    \end{equation*}
    which is exactly the subdistribution given by applying the upper path to \( (\sigma, \tau) \).
    The final axiom follows from the monad axiom and the fact that \( u = \eta_\pt \).
    This concludes the proof. 
\end{proof}

\begin{dfn} [\tc{Lifting from \( \Set \) to \( \PSet \)}]
    Let \( \F \colon \Set \to \Set \) be an endofunctor of \( \Set \) and suppose that an assignment \( (X, \sim) \mapsto (\F X, \sim) \) mapping a partitioned set \( X \) to an equivalence relation on \( \F X \) is fixed.
    We say that \( \F \) {\bf lift to a functor of partitioned sets} if for all functions \( f \colon (X, \sim) \to (Y, \sim) \) of partitioned sets, \( \F f \colon \F X \to \F Y \) is a function of partitioned set \( \F f \colon (\F X, \sim) \to (\F Y, \sim) \).
    In that case, this defines a unique endofunctor \( \tilde{\F} \colon \PSet \to \PSet \) such that \( \fun{U}\tilde{\F} = \F \).
    
    Furthermore, if \( \G \colon \Set \to \Set \) is another endofunctor of \( \Set \) lifting via an assignment \( (X, \sim) \mapsto (\G X, \sim) \) to an endofunctor \( \tilde{\G} \colon \PSet \to \PSet \), and \( \alpha \colon \F \celto \G \) is a natural transformation, we say that \( \alpha \) lift to a natural transformation on \( \PSet \) if for all partitioned sets
     \( (X, \sim) \), the function \( \alpha_X \colon \F X \to \G X \) is a function of partitioned sets \( \alpha_{(X, \sim)} \colon (\F X, \sim) \to (\G X, \sim) \).
    In that case, this defines a unique natural transformation \( \tilde{\alpha} \colon \tilde{\F} \celto \tilde{\G} \) such that \( \fun{U}\tilde{\alpha} = \alpha \).
\end{dfn}

\noindent By Definition \ref{dfn:equivalent_subdistribution}, given a partitioned set \( (X, \sim) \), we may endow \( \D X \) with the equivalence relation given by equivalence of subdistributions.
In the next Lemma, we show that this extension is compatible the functorial action of \( \D \) as well as with the monad structure.

\begin{prop} \label{prop:D_lift_to_monad_on_pset}
    The assignment \( (X, \sim) \mapsto (\D X, \sim) \) lifts the subdistribution monad \( (\D, \eta, \mu) \) on \( \Set \) to a monad \( (\tD, \tilde{\eta}, \tilde{\mu}) \) on \( \PSet \), which is monoidal with respect to the cartesian product on partitioned sets.
\end{prop}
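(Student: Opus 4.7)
The plan is to verify separately that the three ingredients of the monoidal monad structure — the endofunctor $\D$, the natural transformations $\eta$ and $\mu$, and the monoidal structure $t$ and $u$ — each lift along the forgetful functor $\fun U \colon \PSet \to \Set$. Because $\fun U$ is faithful, once every piece has been shown to lift, all the required axioms (associativity, unitality, monoidal coherence) in $\PSet$ follow automatically from their counterparts in $\Set$ which are already established in Definition \ref{dfn:subdist_set} and Lemma \ref{lem:subdistribution_monoidal}. Thus the proof reduces to four preservation checks.

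First, I would show that $\D$ lifts. Let $f \colon (X,\sim) \to (Y,\sim)$ be a morphism of partitioned sets and let $p \sim q$ in $\D X$. For every $y \in Y$, the preimage $f^{-1}([y])$ is a union of equivalence classes of $X$ since $f$ respects $\sim$. Using Definition \ref{dfn:equivalent_subdistribution},
\begin{equation*}
    \sum_{y' \in [y]} \D f(p)(y') = \sum_{x \in f^{-1}([y])} p(x) = \sum_{[x_i] \subseteq f^{-1}([y])} \sum_{[x_i]} p,
\end{equation*}
and the analogous formula for $q$ agrees term by term with this sum, proving $\D f(p) \sim \D f(q)$.

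Second, I would verify that $\eta$ lifts: if $x \sim x'$, then $\dirac{x}$ and $\dirac{x'}$ both assign total mass $1$ to the equivalence class $[x] = [x']$ and $0$ to every other class, so $\eta_X(x) \sim \eta_X(x')$. The main obstacle of the proof is the lift of $\mu$. Suppose $\sigma, \tau \in \D\D X$ are equivalent with respect to the induced relation on $\D X$; this means that for every equivalence class $C$ of $(\D X, \sim)$ we have $\sum_{p \in C} \sigma(p) = \sum_{p \in C} \tau(p)$. For any $x \in X$,
\begin{equation*}
    \sum_{x' \in [x]} \mu_X(\sigma)(x') = \sum_{p \in \D X} \sigma(p) \sum_{x' \in [x]} p(x') = \sum_{C} \Bigl(\sum_{p \in C} \sigma(p)\Bigr) \sum_{x' \in [x]} p_C(x'),
\end{equation*}
where we have grouped $p$'s by their $\sim$-class $C$ with representative $p_C$, using that all $p \in C$ assign the same total mass to $[x]$ (i.e., $\sum_{x' \in [x]} p(x')$ depends only on $C$). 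Applying the hypothesis $\sum_{p \in C} \sigma(p) = \sum_{p \in C} \tau(p)$ and reversing the computation gives $\sum_{x' \in [x]} \mu_X(\sigma)(x') = \sum_{x' \in [x]} \mu_X(\tau)(x')$, hence $\mu_X(\sigma) \sim \mu_X(\tau)$.

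Finally, I would verify that the monoidal structure lifts. For $u \colon \pt \to \D\pt$ this is trivial since $\pt$ carries its unique equivalence relation. For the structural transformation $t_{X,Y}$, the equivalence relation on $(X,\sim) \times (Y,\sim)$ factors as $(x,y) \sim (x',y')$ iff $x \sim x'$ and $y \sim y'$, so for $(\sigma_1, \tau_1) \sim (\sigma_2, \tau_2)$,
\begin{equation*}
    \sum_{(x',y') \sim (x,y)} t_{X,Y}(\sigma_i, \tau_i)(x', y') = \Bigl(\sum_{x' \sim x} \sigma_i(x')\Bigr)\Bigl(\sum_{y' \sim y} \tau_i(y')\Bigr),
\end{equation*}
and the right-hand sides for $i = 1, 2$ agree by the hypothesis. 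Once all four lifts are established, the faithfulness of $\fun U$ immediately transports the monad axioms and the lax monoidal coherences from $(\D, \eta, \mu, t, u)$ on $\Set$ to the corresponding structures on $\PSet$, completing the proof.
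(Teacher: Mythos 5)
Your proposal is correct and follows essentially the same route as the paper's proof: the same four lifting checks (functoriality via preimages of classes being unions of classes, the trivial lift of \( \eta \), the lift of \( \mu \) by grouping subdistributions over their classes in \( \D X \), and the lift of \( t_{X,Y} \) using the product description of classes), together with the same appeal to faithfulness of \( \fun{U} \) to transport the monad and monoidal axioms from \( \Set \).
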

\begin{proof}
	Let $f \colon (X, \sim) \to (Y, \sim)$ be a morphism of partitioned sets, and let \( p, q \in \D X \) such that $p \sim q$.
    For any \( x, x' \in X \) such that $x \sim x' \in X$, we have that $f(x) \sim f(x')$.
    Thus, for all \( y \in Y \), the set \( \invrs{f}\eqclass{y} \) is either empty or of the form \( \eqclass{f(x)} \) for any \( x \in \invrs{f}(y) \). 
    Therefore, for all \( y \in Y \)
    \begin{equation*}
        \sum_{\eqclass{y}} \D f(p) = \sum_{\invrs{f}\eqclass{y}} p = \sum_{\invrs{f}\eqclass{y}} q = \sum_{\eqclass{y}} \D f(q),
    \end{equation*}
    where the in the middle step we used \( p \sim q \).
    This shows that \( \D f(p) \sim \D f(q) \), hence that \( \D \) lifts to an endofunctor \( \tD \colon \PSet \to \PSet \). 
        
    Next, let \( (X, \sim) \) be a partitioned set. 
    Then it is straightforward to check that for all $x, x' \in X$, if \( x \sim x' \) then $\dirac{x} \sim \dirac{x'}$, thus \( \eta \) lifts to a natural transformations \( \tilde{\eta} \). 
    Finally, let \( \sigma, \tau \in \D\D X \) such that \( \sigma \sim \tau \) and \( x \in X \).
    We have
    \begin{align*}
        \sum_{\eqclass{x}}\mu_X(\sigma) &= \sum_{p \in \D X} \sum_{x' \sim x} p(x') \sigma(p) \\ 
        &= \sum_{\eqclass{q} \in \quot {\D X} \sim} \sum_{p \in \eqclass{q}} \sigma(p) \sum_{x' \sim x} p(x') \\
        &= \sum_{\eqclass{q} \in \quot {\D X} \sim} \sum_{p \in \eqclass{q}} \sigma(p) \sum_{x' \sim x} q(x') && \text{since } p \sim q \\
        &= \sum_{\eqclass{q} \in \quot {\D X} \sim} \left(\sum_{x' \sim x} q(x')\right) \sum_{p \in \eqclass{q}} \sigma(p)  \\
        &= \sum_{\eqclass{q} \in \quot {\D X} \sim} \left(\sum_{x' \sim x} q(x')\right) \sum_{p \in \eqclass{q}} \tau(p) && \text{since } \sigma \sim \tau \\
        &= \sum_{\eqclass{x}}\mu_X(\tau).
    \end{align*}
    Therefore, \( \mu_X(\sigma) \sim \mu_X(\tau) \), showing that \( \mu \) lifts to a natural transformation \( \tilde{\mu} \).
    Since \( \fun{U} \) is faithful, the monad axioms for \( (\tD, \tilde{\eta}, \tilde{\mu}) \) hold automatically.

    To show that \( \tD \) is monoidal, it is enough by Lemma \ref{lem:subdistribution_monoidal}, and since \( \fun{U} \) is faithful, to show that for all partitioned sets \( (X, \sim) \) and \( (Y, \sim) \), the structural function \( t_{X, Y} \) defines a morphism of partitioned sets 
    \begin{equation*}
        t_{(X, \sim), (Y, \sim)} \colon \tD (X, \sim) \times \tD(Y, \sim) \to \tD ((X \times Y), \sim).
    \end{equation*}
    Let \( (\sigma, \tau) \sim (\sigma', \tau') \) in \( \tD X \times \tD Y \). 
    By Lemma \ref{lem:pset_co_complete}, this is equivalent to say that \( \sigma \sim \sigma' \) and \( \tau \sim \tau' \).
    By the same result, for any couple \( (x, y) \in X \times Y \), we have \( \eqclass{(x, y)} = \eqclass{x} \times \eqclass{y} \).
    Therefore, 
    \begin{align*}
        \sum_{\eqclass{(x, y)}} t(\sigma, \tau) &= \sum_{\eqclass{x}} \sigma \sum_{\eqclass{y}} \tau \\
        &=  \sum_{\eqclass{x}} \sigma' \sum_{\eqclass{y}} \tau' && \text{since } \sigma \sim \sigma' \text{ and }\tau \sim \tau' \\
        &= \sum_{\eqclass{(x, y)}} t(\sigma', \tau').
    \end{align*} 
    This means that \( t(\sigma, \tau) \sim t(\sigma', \tau') \) and concludes the proof.
\end{proof}

\noindent Using the previous result, we make the following definition.

\begin{dfn} [\tc{Subdistribution monad on partitioned sets}] \label{dfn:subdistribution_pset}
    The {\bf subdistribution monad on partitioned sets} is the monoidal monad \( (\tD, \tilde{\eta}, \tilde{\mu}) \) on \( \PSet \) obtained by lifting the subdistribution monad on \( \Set \).
\end{dfn}

\noindent For each partitioned set \( (X, \sim) \), there is a function \( \quottf_{(X, \sim)} \colon \quot{(\D X)}{\sim} \to \D(\quot X \sim) \)    defined by 
    \begin{equation*}
     \quottf_{(x, \sim)}(\eqclass{p}) \eqdef \eqclass{x} \mapsto \sum_{\eqclass{x}} p, 
     \end{equation*} which does not depend, by Definition \ref{dfn:equivalent_subdistribution}, on the chosen representative \( p \) of \( \eqclass{p} \).
    This defines a natural transformation \( \quottf \) fitting in
    \begin{center}
        \begin{tikzcd}
            \PSet & \PSet \\
            \Set & {\Set.}
            \arrow["\tD", from=1-1, to=1-2]
            \arrow["\Quot"', from=1-1, to=2-1]
            \arrow["\quottf"', shorten <=2pt, shorten >=2pt, Rightarrow, from=1-2, to=2-1]
            \arrow["\Quot", from=1-2, to=2-2]
            \arrow["\D"', from=2-1, to=2-2]
        \end{tikzcd}
    \end{center}

\begin{lem} \label{lem:quottf_colax_morphism}
    The pair \( (\Quot, \quottf) \) defines a colax morphism between the monoidal monads \( \tD \) and \( \D \).
\end{lem}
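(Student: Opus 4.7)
The strategy is to verify directly the two defining equations of a colax morphism of monads, namely the unit axiom
\begin{equation*}
    (\tilde\eta \comp{0} \Quot) \comp{1} \quottf = \Quot \comp{0} \eta
\end{equation*}
and the multiplication axiom
\begin{equation*}
    (\tilde\mu \comp{0} \Quot) \comp{1} \quottf = (\tD \comp{0} \quottf) \comp{1} (\quottf \comp{0} \D) \comp{1} (\Quot \comp{0} \mu).
\end{equation*}
Both reduce to a computation on components at an arbitrary partitioned set \( (X, \sim) \).

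The unit condition is immediate. Evaluating the left-hand side at \( \eqclass{x} \in \quot{X}{\sim} \), the component \( \Quot(\tilde\eta_{(X,\sim)}) \) sends \( \eqclass{x} \) to \( \eqclass{\dirac{x}} \), after which \( \quottf_{(X,\sim)} \) aggregates to the subdistribution on \( \quot{X}{\sim} \) whose value at \( \eqclass{y} \) is \( \sum_{\eqclass{y}} \dirac{x} \). This equals \( 1 \) precisely when \( \eqclass{y} = \eqclass{x} \), so the composite produces \( \dirac{\eqclass{x}} = \eta_{\quot{X}{\sim}}(\eqclass{x}) \), matching the right-hand side.

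For the multiplication axiom, both sides are maps \( \quot{\D\D X}{\sim} \to \D(\quot{X}{\sim}) \), and I evaluate them on a class \( \eqclass{\Sigma} \) with \( \Sigma \in \D\D X \). By the definition of \( \tilde\mu \) and swapping the order of summation, the left-hand side yields
\begin{equation*}
    \eqclass{x} \longmapsto \sum_{\eqclass{x}} \mu_X(\Sigma) \;=\; \sum_{p \in \D X} \Sigma(p) \sum_{\eqclass{x}} p.
\end{equation*}
For the right-hand side, the key observation is that \( \quottf_{(X,\sim)} \) is injective, since by Definition \ref{dfn:equivalent_subdistribution} two elements of \( \D X \) lie in the same class if and only if they produce the same aggregated distribution. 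Unwinding, \( \quottf_{\tD(X,\sim)}(\eqclass{\Sigma}) \) is the subdistribution on \( \quot{\D X}{\sim} \) given by \( \eqclass{p} \mapsto \sum_{p' \in \eqclass{p}} \Sigma(p') \); its pushforward along \( \quottf_{(X,\sim)} \) under \( \D \) has no cancellation by the said injectivity; finally, applying \( \mu_{\quot{X}{\sim}} \) flattens everything to
\begin{equation*}
    \eqclass{x} \longmapsto \sum_{\eqclass{p} \in \quot{\D X}{\sim}} \Bigl(\sum_{\eqclass{x}} p\Bigr)\Bigl(\sum_{p' \in \eqclass{p}} \Sigma(p')\Bigr),
\end{equation*}
in which the factor \( \sum_{\eqclass{x}} p \) is independent of the chosen representative. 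Absorbing the inner sum over \( p' \sim p \) into an unrestricted sum over \( \D X \) yields the same expression as the left-hand side.

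The entire argument is essentially bookkeeping; the only subtle point is recognising that the injectivity of \( \quottf_{(X,\sim)} \) collapses the preimage sum hidden inside \( \D(\quottf_{(X,\sim)}) \), so that the two expressions match term-by-term with no additional combinatorial factors. I anticipate no further obstacles beyond careful tracking of indices.
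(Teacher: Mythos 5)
Your verification of the unit and multiplication axioms is correct and follows essentially the same component-wise computation as the paper: the unit check is the same direct evaluation on $\dirac{x}$, and your collapse of the double sum $\sum_{\tilde p}\sum_{\eqclass{q}\in\invrs{\quottf}(\tilde p)}$ into a single sum over $\quot{(\D X)}{\sim}$, using the fact that $\sum_{\eqclass{x}}p$ depends only on the class of $p$, is exactly the paper's manipulation. (A minor remark: the injectivity of $\quottf_{(X,\sim)}$, while true, is not actually needed here --- the double sum over $\tilde p$ and over the fibre $\invrs{\quottf}(\tilde p)$ reindexes to a single sum over the domain for \emph{any} function, since the fibres partition the domain and $\tilde p_{\eqclass{x}} = \sum_{\eqclass{x}}q$ on each fibre. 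So your ``key observation'' is a harmless detour.)

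However, there is a genuine gap: the lemma asserts a colax morphism between the \emph{monoidal} monads $\tD$ and $\D$, i.e.\ between monads internal to the 2-category $\MonCatlax$, not to $\Cat$. A colax morphism in $\Mnd(\MonCatlax)$ requires the 1-cell $\Quot$ to be a lax monoidal functor and, crucially, the 2-cell $\quottf$ to be a \emph{monoidal transformation} --- it must commute with the structure maps $t_{X,Y}$ and $\tilde t_{X,Y}$ of the composite lax monoidal functors $\tD\comp{0}\Quot$ and $\Quot\comp{0}\D$, and with the unit structure morphisms. You verify only the two equations that make sense in $\Mnd(\Cat)$ and never address the monoidality of $\quottf$, which the paper checks explicitly (the square relating $\quottf\times\quottf$, $t_{\quot{X}{\sim},\quot{Y}{\sim}}$ and $\Quot(\tilde t_{X,Y})$). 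This is not a formality one can omit: the monoidality of $\quottf$ is precisely the ingredient that Proposition \ref{prop:transfer_monoidal_monad} consumes to make $\Q$ a \emph{strong monoidal} functor in Theorem \ref{thm:induced_morphism_into_Kleisli} and Corollary \ref{cor:aggregation_functor}; with only what you have proved, one obtains a plain functor between the Kleisli categories but not its compatibility with the Kronecker product. The missing check is routine (the paper calls it ``immediate by diagram chasing''), but it must appear for the proof to establish the statement as claimed.
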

\begin{proof}
    The functor \( \Quot \) is a strong monoidal functor, hence is in particular lax monoidal.
    We show that the natural transformation \( \quottf \) is a monoidal transformation. 
    The first step is to show that the diagram
    \begin{center}
        \begin{tikzcd}
            {\quot{(\D X)}{\sim} \times \quot{(\D Y)}{\sim}} & {\D(\quot X \sim) \times \D(\quot Y \sim)} \\
            {\quot{(\D X \times \D Y)}\sim} & {\D((\quot X \sim) \times (\quot Y \sim))} \\
            {\quot{(\D(X \times Y))}\sim} & {\D(\quot {(X \times Y)}\sim)}
            \arrow["{\quottf \times \quottf}", from=1-1, to=1-2]
            \arrow["\cong"{description}, from=1-1, to=2-1]
            \arrow["{t_{\quot X \sim, \quot Y \sim}}", from=1-2, to=2-2]
            \arrow["{\Quot(\tilde t_{X, Y})}"', from=2-1, to=3-1]
            \arrow["\cong"{description}, from=2-2, to=3-2]
            \arrow["\quottf"', from=3-1, to=3-2]
        \end{tikzcd}
    \end{center}
    commutes, which is immediate by diagram chasing.
    The other axiom is equally straightforward.
    We now show that \( (\Quot, \quottf) \) is a colax morphism of monad.
    Let \( (X, \sim) \) be a partitioned set. 
    By inspection, the unit axioms amounts to showing that for all \( x \in X \), \( \dirac{\eqclass{x}} \) is equal to
    \begin{equation*}
            \quottf(\eqclass{\dirac{x}}) = \eqclass{x'} \mapsto \sum_{\eqclass{x'}} \dirac{x}, 
    \end{equation*}
    which is again a direct check.
    For the multiplication axiom, one need to prove that the diagram, where indices on \( \quottf \) and \( \mu \) have been left implicit, 
    \begin{center}
        \begin{tikzcd}
            {\quot{(\D\D X)} \sim} && {\quot{(\D X)} \sim} \\
            {\D(\quot {(\D X)} \sim)} \\
            {\D\D(\quot X \sim)} && {\D(\quot X \sim)},
            \arrow["{\Quot(\mu)}", from=1-1, to=1-3]
            \arrow["\quottf"', from=1-1, to=2-1]
            \arrow["\quottf", from=1-3, to=3-3]
            \arrow["\D(\quottf)"', from=2-1, to=3-1]
            \arrow["\mu"', from=3-1, to=3-3]
        \end{tikzcd}
    \end{center}
    commutes.
    A precise examination of the definitions reveals that the lower paths sends \( \eqclass{\sigma} \in \quot {(\D \D X)} \sim \) to 
    \begin{equation*}
        \eqclass{x} \mapsto \sum_{\tilde p \in \D (\quot X {\scriptstyle\sim})} \left(\sum_{\eqclass{q} \in \invrs{\quottf}(\tilde{p})} \sum_{\eqclass{q}} \sigma\right)\tilde{p}_{\eqclass{x}}.
    \end{equation*}
    Now, observe that \( \eqclass{q} \in \invrs{\quottf}\tilde{p} \) if and only if \( \quottf\eqclass{q} = \tilde{p} \) if and only if for all \( \eqclass{x} \in \quot X \sim \), we have \( \tilde{p}_{\eqclass{x}} = \sum_{\eqclass{x}} q. \)
    Thus, for all \( \eqclass{x} \in \quot X \sim \), we get
    \begin{align*}
       \sum_{\tilde p \in \D (\quot X {\scriptstyle\sim})} \left(\sum_{\eqclass{q} \in \invrs{\quottf}(\tilde{p})} \sum_{\eqclass{q}} \sigma\right)\tilde{p}_{\eqclass{x}} &=
        \sum_{\tilde p \in \D (\quot X {\scriptstyle\sim})} \sum_{\eqclass{q} \in \invrs{\quottf}(\tilde{p})} \sum_{\eqclass{q}} \sigma \sum_{\eqclass{x}} q \\
        &= \sum_{\eqclass{q} \in \quot {(\D X)} {\scriptstyle\sim}}  \sum_{\eqclass{q}} \sigma \sum_{\eqclass{x}} q \\
        &= \sum_{\eqclass{q} \in \quot {(\D X)} {\scriptstyle\sim}}  \sum_{p \sim \eqclass{q}} \sigma(p) \sum_{x' \sim \eqclass{x}} p_{x'} & \text{since } p \sim q, \\
        &= \sum_{x' \sim \eqclass{x}} \sum_{p \in \D X} \sigma(p)p_{x'} \\
        &= \sum_{\eqclass{x}} \mu(\sigma) \\
        &= \quottf \eqclass{\mu(\sigma)} \eqclass{x} = \quottf\left( \Quot(\mu)(\eqclass{\sigma}) \right).
    \end{align*}
    This shows that the diagram commutes and concludes the proof.
\end{proof}

\noindent By Proposition \ref{prop:transfer_monoidal_monad} and Lemma \ref{lem:subdistribution_monoidal}, we have the following monoidal structure on \( \Kl(\D) \).

\begin{dfn} [\tc{Monoidal structure on \( \Kl(\D) \)}] \label{dfn:Kleisli_monoidal_distribution}
    The cartesian product in \( \Set \) sets induces a monoidal structure \( (\Kl(\D), \otimes, \pt) \) on \( \Kl(\D) \) defined by the cartesian product on objects, and, given two morphisms \( f \colon X \to Y \) and \( f' \colon X' \to Y' \) in \( \Kl(\D) \), 
    \begin{equation*}
        f \otimes f' \eqdef (f \times f')t_{Y, Y'}.
    \end{equation*}
\end{dfn}

\noindent Similarly, by Proposition \ref{prop:transfer_monoidal_monad} and Proposition \ref{prop:D_lift_to_monad_on_pset}, we have the following monoidal structure on \( \Kl(\tD) \).

\begin{dfn} [\tc{Monoidal structure on \( \Kl(\tD) \)}]
    The cartesian product in \( \PSet \) sets induces a monoidal structure \( (\Kl(\tD), \otimes, \pt) \) on \( \Kl(\tD) \) defined by the cartesian product on objects, and, given two morphisms \( f \colon (X, \sim) \to (Y, \sim) \) and \( f' \colon (X', \sim) \to (Y', \sim) \) in \( \Kl(\tD) \), 
    \begin{equation*}
        f \otimes f' \eqdef (f \times f')\tilde{t}_{(Y, \sim), (Y', \sim)}.
    \end{equation*}
\end{dfn}

\begin{thm} \label{thm:induced_morphism_into_Kleisli}
    The monoidal functor \( \Quot \colon (\PSet, \times, \pt) \to (\Set, \times, \pt) \) extends to a strong monoidal functor 
    \begin{equation*}
        \Q \colon (\Kl(\tD), \otimes, \pt) \to (\Kl(\D), \otimes, \pt)
    \end{equation*}
    by sending a morphism \( f \colon (X, \sim) \to (\D Y, \sim) \) to \( \Q(f) \eqdef \Quot(f) \quottf_{(Y, \sim)} \). 
\end{thm}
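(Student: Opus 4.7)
The plan is to recognise the statement as an immediate application of the machinery developed earlier, most notably Proposition \ref{prop:transfer_monoidal_monad} (transfer of monoidal structures through the Kleisli 2-functor) together with Lemma \ref{lem:quottf_colax_morphism}, which established that $(\Quot, \quottf)$ is a colax morphism between the monoidal monads $\tD$ and $\D$. By Proposition \ref{prop:transfer_monoidal_monad}, applying the Kleisli 2-functor $\Kl \colon \Mnd(\MonCatlax) \to \MonCatlax$ to this colax morphism automatically yields a lax monoidal functor between the corresponding Kleisli categories, which is furthermore strong monoidal whenever the underlying functor is strong monoidal.

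The only substantive preparatory check is that $\Quot \colon (\PSet, \times, \pt) \to (\Set, \times, \pt)$ is strong monoidal. By Lemma \ref{lem:pset_co_complete} and the definition of the product in $\PSet$, the underlying set of $(X, \sim) \times (Y, \sim)$ is $X \times Y$ with the relation $(x, y) \sim (x', y')$ iff $x \sim x'$ and $y \sim y'$. The equivalence class of $(x, y)$ therefore decomposes as $\eqclass{x} \times \eqclass{y}$, giving a natural bijection $\quot{(X \times Y)}{\sim} \cong \quot{X}{\sim} \times \quot{Y}{\sim}$; the unit comparison $\pt \cong \Quot(\pt)$ is similarly immediate. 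Hence $\Quot$ is strong monoidal.

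It then suffices to identify the strong monoidal functor $\Kl(\Quot, \quottf)$ produced by Proposition \ref{prop:transfer_monoidal_monad} with the $\Q$ of the statement. On objects it acts as $\Quot$, and on a Kleisli morphism $f \colon (X, \sim) \to \tD(Y, \sim)$, the explicit description from Remark \ref{rmk:Kleisli_2_functor} gives $\Kl(\Quot, \quottf)(f) = \Quot(f) \, \quottf_{(Y, \sim)}$, which is exactly the formula defining $\Q$. There is no real obstacle in this proof: all the difficult verifications (the colax axioms for $\quottf$ with respect to both units and multiplications, and the monoidality of $\quottf$) were already discharged in Lemma \ref{lem:quottf_colax_morphism}, so the argument reduces to invoking the formal machinery and matching notation.
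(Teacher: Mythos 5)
Your proof is correct and follows exactly the paper's own route: invoke Lemma \ref{lem:quottf_colax_morphism} to obtain the colax morphism $(\Quot, \quottf)$ of monoidal monads, then apply Proposition \ref{prop:transfer_monoidal_monad} together with the fact that $\Quot$ is strong monoidal. Your additional verification of the strong monoidality of $\Quot$ and the explicit matching of $\Kl(\Quot,\quottf)$ with the stated formula for $\Q$ are welcome details that the paper leaves implicit.
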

\begin{proof}
    By Lemma \ref{lem:quottf_colax_morphism}, the pair \( (\Quot, \quottf) \) defines a colax morphism in the category \( \Mnd(\MonCatlax) \).
    The result then follows from Proposition \ref{prop:transfer_monoidal_monad} and the fact that \( \Quot \) is strong monoidal.
\end{proof}

\subsection{Kleisli categories, subdistribution and partitioned matrices}
\label{Kleisli-of-subdistribution-monads}

Recall the categories \( \SubMat \) and \( \PSubMat \) from Definition \ref{dfn:subdistribution_matrices} and Definition \ref{dfn:partionned_matrices}.
In the former, the objects are sets, morphisms are subdistribution matrices, and composition is given by matrix multiplication.
In the latter, the objects are partitioned sets, morphisms are subdistribution matrices, and composition is given again by matrix multiplication.

\begin{prop} \label{prop:Kleisli_is_matrix}
    The categories \( \Kl(\D) \) and \( \Kl(\tD) \) are respectively equivalent to the categories \( \SubMat \) and \( \PSubMat \).
    The monoidal structure on the Kleisli categories transfer to monoidal structure on \( \SubMat \) and \( \PSubMat \) whose monoidal product is given by the Kronecker product and monoidal unit given by the terminal object.
\end{prop}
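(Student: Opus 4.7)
The plan is to exhibit, in each case, an identity-on-objects isomorphism of categories, and then to transfer the monoidal structure given by Proposition \ref{prop:transfer_monoidal_monad}.

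First, I would define a correspondence \( \fun{E} \) sending a Kleisli morphism \( f \colon X \to \D Y \) to the matrix \( \fun{E}f \) whose \( (x, y) \)-entry is \( f(x)(y) \). Since \( f(x) \) is a subdistribution on \( Y \), each row is finitely supported and sums to at most one, so \( \fun{E}f \) is a subdistribution matrix. Conversely, any subdistribution matrix \( M \) of shape \( (X, Y) \) determines a function \( x \mapsto M_{x, -} \in \D Y \), and these operations are mutually inverse. Functoriality is a direct unwinding: the Kleisli identity at \( X \) is \( \eta_X(x) = \dirac{x} \), whose matrix form is \( \idmat{X} \); and for \( f \colon X \to \D Y \), \( g \colon Y \to \D Z \) the Kleisli composite has \( (x, z) \)-entry \( \sum_{y \in Y} f(x)(y)\, g(y)(z) = (\fun{E}f \cdot \fun{E}g)_{x, z} \). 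This gives an isomorphism \( \fun{E} \colon \Kl(\D) \xrightarrow{\sim} \SubMat \).

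Next, I would repeat the argument for \( \Kl(\tD) \) versus \( \PSubMat \). A morphism \( f \colon (X, \sim) \to \tD(Y, \sim) \) in \( \Kl(\tD) \) is, by Definition \ref{dfn:subdistribution_pset}, a function \( f \colon X \to \D Y \) such that \( x \sim x' \) implies \( f(x) \sim f(x') \) in the sense of Definition \ref{dfn:equivalent_subdistribution}. Since \( f(x) = \dirac{x} \fun{E}f = (\fun{E}f)_{x, -} \), this is precisely the condition of Proposition \ref{prop:partitioned-matrices-concretely} characterizing partitioned matrices, so \( \fun{E} \) restricts to a hom-set bijection onto \( \PSubMat((X, \sim), (Y, \sim)) \). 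Functoriality and the fact that the composition of partitioned matrices is again partitioned both come for free from the corresponding facts about Kleisli composition in \( \Kl(\tD) \), justifying the well-definedness of \( \PSubMat \) claimed in the preceding remark.

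Finally, by Proposition \ref{prop:transfer_monoidal_monad} applied to the monoidal monads of Lemma \ref{lem:subdistribution_monoidal} and Proposition \ref{prop:D_lift_to_monad_on_pset}, both \( \Kl(\D) \) and \( \Kl(\tD) \) are symmetric monoidal with unit the terminal object and tensor of morphisms given by \( f \otimes^{\kl} f' = (f \times f')\, t_{Y, Y'} \). Unwinding: for \( f \colon X \to \D Y \) and \( f' \colon X' \to \D Y' \), the \( ((x, x'), (y, y')) \)-entry of the transported morphism is \( f(x)(y)\, f'(x')(y') = (\fun{E}f)_{x, y}(\fun{E}f')_{x', y'} \), exactly the Kronecker product. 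Transporting along \( \fun{E} \) therefore yields the monoidal structures on \( \SubMat \) and \( \PSubMat \) described in the statement. The only nontrivial step is the identification of Kleisli morphisms on partitioned sets with partitioned matrices, and this is taken care of cleanly by Proposition \ref{prop:partitioned-matrices-concretely}.
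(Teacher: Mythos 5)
Your proposal is correct and follows the same route as the paper's proof, which simply invokes the currying correspondence between subdistribution matrices of shape \( (X, Y) \) and functions \( X \to \D Y \) and leaves the unfolding of Kleisli composition, the partitioned-matrix condition, and the Kronecker product as an exercise. You have carried out that exercise accurately, including the key identification of the \( \PSet \)-morphism condition on \( X \to \tD Y \) with the characterization of partitioned matrices in Proposition \ref{prop:partitioned-matrices-concretely}.
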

\begin{proof}
    By currying, a subdistribution matrix \( M \) of shape \( (X, Y) \) is equivalently a function \( M \colon X \to \D Y \). 
    We leave to the reader the exercise of unfolding the definition of Kleisli composition and check that it coincides with matrix multiplication, and similarly, that the Kronecker product is curryied to Definition \ref{dfn:Kleisli_monoidal_distribution}.
\end{proof}

\noindent Recall the copy and discard matrices from Definition \ref{dnf:copy_discard_sustoch}.

\begin{cor} \label{cor:submat_pmat_copy_discard}
    The categories \( \SubMat \) and \( \PSubMat \) are CDU categories, with copy-discard structure given by the copy matrix and the discard matrix.
\end{cor}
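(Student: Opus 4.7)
The plan is to assemble this from three inputs already established in the excerpt: both $\Set$ and $\PSet$ are CDU categories with respect to their cartesian structures (for $\PSet$ this is the discussion around Lemma \ref{lem:copy_discard_pset} and Corollary \ref{cor:pset_monoidal}); the subdistribution monads $\D$ on $\Set$ and $\tD$ on $\PSet$ are monoidal monads (Lemma \ref{lem:subdistribution_monoidal} and Proposition \ref{prop:D_lift_to_monad_on_pset}); and the transfer result Corollary \ref{cor:transfer_CDU}, which turns the Kleisli category of a monoidal monad on a CDU category back into a CDU category. Combining the first two with the third immediately gives CDU structures on $\Kl(\D)$ and $\Kl(\tD)$, and Proposition \ref{prop:Kleisli_is_matrix} transports these structures across the monoidal equivalences to $\SubMat$ and $\PSubMat$.

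What remains is to identify the transported copy and discard with the matrices of Definition \ref{dnf:copy_discard_sustoch}. Under the currying bijection of Proposition \ref{prop:Kleisli_is_matrix}, a Kleisli morphism $f \colon X \to \D Y$ corresponds to the subdistribution matrix with entry $M_{x,y} = f(x)(y)$. By Corollary \ref{cor:transfer_CDU}, the copy morphism on $X$ in $\Kl(\D)$ is the composite $\Delta_X \eta_{X \times X} \colon X \to \D(X \times X)$, i.e.\ the Kleisli lift of the diagonal, sending $x$ to $\dirac{(x,x)}$. Currying yields the matrix whose row at $x$ assigns $1$ to $(x,x)$ and $0$ elsewhere, which is exactly $\copyy X$. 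Similarly, the discard on $X$ is $\varepsilon_X \eta_{\pt} \colon X \to \D \pt$, sending every $x$ to $\dirac{*}$, whose curried matrix has a single column of $1$s, namely $\disc X$.

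For $\PSubMat$ the same argument applies verbatim after replacing $\D$ with $\tD$: since $\fun{U} \tD = \D$ and the unit/multiplication of $\tD$ lift those of $\D$, the curried copy and discard in $\Kl(\tD)$ are again computed entrywise by the formulas of Definition \ref{dnf:copy_discard_sustoch}, only now viewed as partitioned matrices. (One must also observe that these matrices are in fact partitioned, but this is automatic, as they arise from morphisms in $\PSet$ which respect the equivalence relations, hence land in the subcategory $\PSubMat \simeq \Kl(\tD)$ rather than merely in $\SubMat$.)

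I do not expect a real obstacle: the whole content has been prepackaged into Corollary \ref{cor:transfer_CDU} and Proposition \ref{prop:Kleisli_is_matrix}, and the only genuine verification is the entrywise match between $(\Delta_X \eta_{X\times X}, \varepsilon_X \eta_{\pt})$ and $(\copyy X, \disc X)$, which is a direct unfolding of the definitions of $\eta$ and of currying. The one bookkeeping point worth stating explicitly in the write-up is that the uniformity axiom (the compatibility of $\Delta$ and $\varepsilon$ with the monoidal product) is inherited for free because the strict-monoidal canonical functors $\D^\kl$ and $\tD^\kl$ preserve the cartesian copy-discard structures of $\Set$ and $\PSet$.
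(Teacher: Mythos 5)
Your proposal is correct and follows exactly the paper's route: apply Corollary \ref{cor:transfer_CDU} to the monoidal monads \( \D \) and \( \tD \) on the CDU categories \( \Set \) and \( \PSet \), then transport the resulting structures across the equivalences of Proposition \ref{prop:Kleisli_is_matrix}. The only difference is that you spell out the entrywise identification of \( \Delta_X\eta_{X\times X} \) and \( \varepsilon_X\eta_{\pt} \) with \( \copyy{X} \) and \( \disc{X} \), which the paper leaves implicit.
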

\begin{proof}
    By Corollary \ref{cor:transfer_CDU} under the equivalence of categories given by Proposition \ref{prop:Kleisli_is_matrix}. 
\end{proof}

\noindent By the last result and Theorem \ref{thm:induced_morphism_into_Kleisli}, we obtain a monoidal functor 
\begin{equation*}
    \Q \colon (\PSubMat, \kro, \pt) \to (\SubMat, \kro, \pt).
\end{equation*}
unfolding the definition, \( \Q \) is defined by sending a partitioned set \( (X, \sim) \) to \( \quot{X}{\sim} \) and a partitioned matrix \( M \) of shape \( (X, Y) \) to the matrix of shape \( (\quot{X}{\sim}, \quot{Y}{\sim}) \) given by
\begin{equation*}
    (\eqclass{x}, \eqclass{y}) \mapsto \sum_{[y]} M_{x, -}.
\end{equation*}
This is the aggregated matrix of Definition \ref{dfn:aggregation}.
Therefore, we conclude with the following corollary.

\begin{cor} \label{cor:aggregation_functor}
    The construction sending a partitioned matrix \( M \) to its aggregated matrix \( \Q M \) defines a strong monoidal functor
    \begin{equation*}
        \Q \colon (\PSubMat, \kro, \pt) \to (\SubMat, \kro, \pt).
    \end{equation*}
\end{cor}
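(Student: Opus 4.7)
The plan is to deduce this corollary by combining two results already established in the section: Theorem \ref{thm:induced_morphism_into_Kleisli}, which provides a strong monoidal functor $\Q \colon (\Kl(\tD), \otimes, \pt) \to (\Kl(\D), \otimes, \pt)$, and Proposition \ref{prop:Kleisli_is_matrix}, which identifies these Kleisli categories together with their monoidal structures with $(\PSubMat, \kro, \pt)$ and $(\SubMat, \kro, \pt)$ respectively. Transporting the functor from Theorem \ref{thm:induced_morphism_into_Kleisli} along these monoidal equivalences immediately yields a strong monoidal functor $\PSubMat \to \SubMat$; the only remaining task is to verify that, under the currying correspondence, this transported functor acts on a partitioned matrix $M$ exactly as the aggregation construction of Definition \ref{dfn:aggregation}.

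To verify this identification, I would proceed by direct computation. Under the equivalence of Proposition \ref{prop:Kleisli_is_matrix}, a partitioned matrix $M$ of shape $(X, Y)$ corresponds to the Kleisli morphism $\hat M \colon (X, \sim) \to \tD(Y, \sim)$ with $\hat M(x) = M_{x,-}$. By the formula in Theorem \ref{thm:induced_morphism_into_Kleisli}, the functor $\Q$ on Kleisli categories sends $\hat M$ to the composite $\Quot(\hat M)\, \quottf_{(Y, \sim)}$, which is a Kleisli morphism $\quot{X}{\sim} \to \D(\quot{Y}{\sim})$. Evaluating this composite at $\eqclass{x} \in \quot{X}{\sim}$ yields $\quottf_{(Y, \sim)}(\eqclass{M_{x,-}})$, and by the definition of $\quottf$, applying this subdistribution to a class $\eqclass{y}$ returns $\sum_{\eqclass{y}} M_{x,-}$. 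Currying back, the associated matrix has entries $(\eqclass{x}, \eqclass{y}) \mapsto \sum_{\eqclass{y}} M_{x,-}$, which is precisely $\Q M$ as defined in Definition \ref{dfn:aggregation}.

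I do not anticipate a serious obstacle, since the substantive work has already been carried out: Proposition \ref{prop:D_lift_to_monad_on_pset} establishes the lift of the subdistribution monad to $\PSet$ and its monoidality, Lemma \ref{lem:quottf_colax_morphism} shows that $(\Quot, \quottf)$ is a colax morphism of monoidal monads, and Theorem \ref{thm:induced_morphism_into_Kleisli} applies the formal machinery to obtain the strong monoidal Kleisli functor. The corollary is essentially the translation of this abstract output into the concrete language of matrices, and the only computation required is the unfolding described above.
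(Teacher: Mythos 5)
Your proposal is correct and matches the paper's own proof: the paper likewise deduces the corollary from Theorem \ref{thm:induced_morphism_into_Kleisli} transported along the equivalence of Proposition \ref{prop:Kleisli_is_matrix}, and then unfolds the currying to identify the resulting functor with the aggregation construction of Definition \ref{dfn:aggregation}. Your explicit computation of \( \Quot(\hat M)\,\quottf_{(Y,\sim)} \) at \( \eqclass{x} \) is exactly the unfolding the paper performs just before stating the corollary.
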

\begin{proof}
    By Theorem \ref{thm:induced_morphism_into_Kleisli} under the equivalence of categories given by Proposition \ref{prop:Kleisli_is_matrix}.
\end{proof}

\printbibliography

\end{document}